\newcommand{\Aut}{\operatorname{Aut}}
\newcommand{\Hom}{\operatorname{Hom}}
\newcommand{\Z}{\mathbb{Z}}
\newcommand{\Q}{\mathbb{Q}}
\newcommand{\C}{\mathbb{C}}
\newcommand{\N}{\mathbb{N}}
\newcommand{\Li}{\mathbb{L}}
\newcommand{\F}{\mathbb{F}}
\newcommand{\DD}{\mathbb{D}}
\newcommand{\U}{\mathscr{U}}
\newcommand{\HH}{{\ensuremath{\mathcal{H}}}}
\newcommand{\unit}{\ensuremath{\mathbb{I}}}
\newcommand{\tcoev}{\stackrel{\longleftarrow}{\operatorname{coev}}}
\newcommand{\tCoev}{\stackrel{\longleftarrow}{\operatorname{Coev}}}
\newcommand{\tev}{\stackrel{\longleftarrow}{\operatorname{ev}}}
\newcommand{\ev}{\stackrel{\longrightarrow}{\operatorname{ev}}}
\newcommand{\Ev}{\stackrel{\longrightarrow}{\operatorname{Ev}}}
\newcommand{\qs}{q}
\newcommand{\q}{\textbf{q}}
\newcommand{\coev}{\stackrel{\longrightarrow}{\operatorname{coev}}}
\newcommand{\D}{{\mathcal{D}}}
\newcommand{\phs}{{{\varphi}^{\hat{W}}_{n,m}}}
\newcommand{\phh}{{\varphi}^{\hat{W}^{ \textbf{q},\lambda}}_{n,m}}
\newcommand{\pv}{{\varphi}^{\hat{W}^{N-1}}_{n,m}}
\newcommand{\ph}{{\varphi}^{W^N}_{n,m}}
\newtheorem{definition}{Definition}[subsection]
\newtheorem{theorem}[definition]{Theorem}
\newtheorem*{theorem*}{Theorem}
\newtheorem{Comment}[definition]{Comment}
\newtheorem{proposition}[definition]{Proposition}
\newtheorem{lemma}[definition]{Lemma}
\newtheorem{remark}[definition]{Remark}
\newtheorem{notation}[definition]{Notation}
\newtheorem{problem}[definition]{Problem}
\newtheorem{corollary}[definition]{Corollary}
\newcounter{exo} \newcounter{numexercice}
\renewcommand{\theexo}{\arabic{exo}}
\newcounter{IntroCounter}
\begin{document}
\title[A topological model for the coloured Jones polynomials]{A topological model for the coloured Jones polynomials} 
  \author{Cristina Ana-Maria Anghel}
\address{Mathematical Institute, University of Oxford, Oxford, United Kingdom} \email{palmeranghel@maths.ox.ac.uk;cristina.anghel@imj-prg.fr} 
  \thanks{ }
  \date{\today}

\begin{abstract}
In this paper we will present a homological model for Coloured Jones Polynomials. For each colour $N \in \N$, we will describe the invariant $J_N(L,q)$ as a graded intersection pairing of certain homology classes in a covering of the configuration space on the punctured disk. This construction is based on the Lawrence representation and a result due to Kohno that relates quantum representations and homological representations of the braid groups.  
\end{abstract}

\maketitle
\setcounter{tocdepth}{1}
 \tableofcontents
 {
\section{Introduction} 

The theory of quantum invariants of knots started in 1984 with the discovery of the Jones polynomial. Later on, in 1989, Reshetikhin and Turaev constructed a general method that having as input any ribbon category leads to coloured link invariants. This method uses techniques which are purely algebraic and combinatorial. The coloured Jones polynomials $J_N(L,q)$ are a family of quantum link invariants, constructed in this manner from the representation theory $\{ V_N |N \in \mathbb N\}$ of the quantum group $U_q(sl(2))$. More precisely, for an oriented knot $L$, the $N^{th}$ coloured Jones polynomial $J_N(L,q)$ is defined algebraically through a Reshetikhin-Turaev type construction:
 $$\left( U_q(sl(2)), V_N \right) \ \ \rightarrow \ \ J_N(L,q)  \ \in \  \Z[q^{\pm 1}].$$ 

In this paper, we aim to create a bridge between the representation theory that is behind this description and topology. We give a topological interpretation for the coloured Jones polynomials, showing that they are graded intersection pairings between homology classes in coverings of configuration spaces in the punctured disc. 
\subsection*{Jones polynomial} 

The first invariant from this sequence, corresponding to $N=2$, is the original Jones polynomial $J(L,q)$. This invariant has many different flavours, it is a quantum invariant, but it also has a different characterisation, namely it can be described by skein relations. However, its relation with the topology of the knot complement is a mysterious question. On the topological side,  in 1993, R. Lawrence constructed a sequence of representations of the braid group $\{ \HH_{n,m} \}_{n,m \in \mathbb N}$ using the homology of a certain covering of the configuration space on the punctured disc.  Later on, Bigelow \cite{Big} and Lawrence \cite{Law} constructed a homological model for the original Jones polynomial $J(L,q)$, describing it as a graded intersection pairing between homology classes in a covering of a configuration space on the punctured disk. They used the Lawrence representation and the skein nature of the invariant for the proof.  

\subsection*{Coloured Jones polynomials}
Related to the question concerning the knot complement, it was shown that the the coloured Jones polynomials, evaluated at roots of unity, recover the sequence of Kashaev invariants. An important conjecture in low dimensional topology, the Volume Conjecture formulated by Kashaev \cite{K} and its generalisation due to Murakami and Murakami \cite{MM}, predicts that the limit of these quantum invariants when the colour $N$ goes to infinity recovers topological information, namely the hyperbolic/ simplicial volume of the complement of the knot.   

We give a topological model for all coloured Jones polynomials. Unlike the original case, they cannot be described directly by skein relations. Our strategy is to use their definition as quantum invariants, to study more deeply the Reshetikhin-Turaev functor and to construct step by step homological counterparts.

\subsection*{Description of the topological tools} In this part we present a brief description of the homology groups that occur in the topological model. The main ingredients in our construction are the following:
\begin{enumerate}
\item the sequence of Lawrence representations $\HH_{n,m}$ ( definition\ref{1} )
\item a sequence of dual Lawrence representation $\HH^{\partial}_{n,m}$ ( definition \ref{22} )\\ 
(defined using the homology relative to the boundary of the same covering) 
\item certain topological intersection pairings $< , >$ between the Lawrence representations and their dual representations ( definition \ref{333} ).
\end{enumerate}
 \
Let us fix $n,m \in \N$ and consider $C_{n,m}$ to be the unordered configuration space of $m$ points in the $n$-punctured disc. Then, we define a $\Z \oplus \Z$ covering space of this configuration space, given by a certain surjective morphism $$\varphi: \pi_1(C_{n,m})\rightarrow\Z \oplus \Z.$$
Consider the covering space $\tilde{C}_{n,m}$ of $C_{n,m}$ associated to this morphism. Then, the deck transformations of this covering space are $$\Z \oplus \Z=<x>_{\Z} \oplus <d>_{\Z}.$$ We consider the Borel-Moore homology of this covering space $\HH^{lf}_{m}(\tilde{C}_{n,m}, \Z).$ Using the deck transformation structure, this becomes a $\Z[x^{\pm},d^{\pm}]$-module. On the other hand, since the braid group $B_n$ is the mapping class group of the punctured disc, this induces an action of the braid group on this homology group, which is compatible with the one coming from deck transformations:
$$ \ \ \ \ \ \ \ \ \ \ \ \ \ \ MCG \ \ \ \ \ \ \ \ \ \ \ \ \ \ \ \ \ \ \ \ \ \ \ \ \ \  \ \ \ \ \ Deck \ \ \ \ \ \ \ \ \ \ \ \ \ \ \ \ \ \ \ \ $$
$$B_n\curvearrowright H^{lf}_m(\tilde{C}_{n,m},\Z)=\Z[x^{\pm},d^{\pm}]-module.$$
Related to this construction, in 2012, Kohno \cite{Koh} showed that 
the braid group action on certain subspaces of this homology group correspond to the quantum representation on the highest weight spaces in the $U_q(sl(2))$-Verma modules. In the sequel we introduce briefly these subspaces and we discuss their precise definition in Section \ref{3}.
\subsection*{I. First homology group-Lawrence representation}
\

 We describe certain subspaces inside these homology groups, generated by classes of Lagrangian submanifolds. In the sequel we use the following indexing sets, parametrised by partitions:
$$ E_{n,m}=\{e=(e_1,...,e_{n-1})\in \N^n \mid e_1+...+e_{n-1}=m \}.$$
For each partition $e$, one associates an $m$-dimensional disc $\F_{e}$ in the base space $C_{n,m}$, which can be lifted to a submanifold $\tilde{\F}_e$ in the covering space $\tilde{C}_{n,m}$. Then, we consider the subspace generated by the classes given by these submanifolds, as presented in definition \ref{multif}:
\begin{equation}\label{1}
\HH_{n,m}:= <[\tilde{\F}_e] \ | \  e\in E_{n,m}>_{\Z[x^{\pm},d^{\pm}]} \subseteq H^{\text{lf}}_m(\tilde{C}_{n,m}, \Z).
\end{equation}
\subsection*{ II. The dual Lawrence representation}
We consider a dual space, defined using the homology of the covering space relative to its boundary, which is generated by barcodes and also prescribed by partitions, as in definition \ref{barc}:
\begin{equation}\label{22}
\HH^{\partial}_{n,m}:=<[\tilde{\DD}_f] \ | \  f\in E_{n,m}>_{\Z[x^{\pm}, d^{\pm}]}\subseteq H_m(\tilde{C}_{n,m}, \Z; \partial).
\end{equation}
\subsection*{ III. Topological pairing}
These homology groups are related by a topological intersection pairing, which is a certain type of Poincar\'e-Lefschetz duality.  More specifically, there is a non-degenerate sesquilinear intersection form:
\begin{equation}\label{333}
< , >: \HH_{n,m} \otimes \HH^{\partial}_{n,m}\rightarrow \Z[x^{\pm},d^{\pm}].
\end{equation}
The advantage of this pairing consists in the fact that even if a priori it is defined using the homology of the covering space, it can be computed actually using diagrams of curves in the configuration space $C_{n,m}$ and the local system $\varphi$.

\subsection*{The specific model} Let $N \in \N$ be the colour of the coloured Jones invariant that we want to study. For the topological model, we use the Lawrence representation and its dual, with certain parameters which depend on the number of the strands of the braid representative and the colour of the invariant. 

In the following picture we have drawn the covering space $\tilde{C}_{n,m}$ that we work with as well as the submanifolds whose homology classes lead to the generators for the two homology groups, which correspond to the case: 
$$ \left(n\rightarrow {\bf 2n};  \ \ \ \ \ m \rightarrow {\bf n(N-1)} \right).$$
\begin{center}
\begin{tikzpicture}

\foreach \x/\y in {-1.2/2, -0.2/2 , 1.1/2 , 4/2 , 2.1/2 , 3.1/2 , 5/2 } {\node at (\x,\y) [circle,fill,inner sep=1pt] {};}
[x=0.9mm,y=0.9mm]
\node at (-1,2) [anchor=north east] {$1$};
\node at (0,2) [anchor=north east] {$2$};
\node at (1.5,2) [anchor=north east] {n-1};
\node at (2.3,1.9) [anchor=north east] {n};
\node at (3.5,2) [anchor=north east] {n+1};
\node at (4.5,2) [anchor=north east] {2n-2};
\node at (5.8,2) [anchor=north east] {2n-1};
\node at (0.2,2.7) [anchor=north east] {$\color{red}e_1$};
\node at (4.6,2.7) [anchor=north east] {$\color{red}e_{2n-1}$};
\node at (0.2,1.65) [anchor=north east] {\color{green}${f_1}$};
\node at (4.5,1.65) [anchor=north east] {$\color{green}{f_{2n-1}}$};
\node at (0.9,2.8) [anchor=north east] {\Large{\color{red}$\F_e$}};
\node at (3.5,2.8) [anchor=north east] {\Large{\color{green}$\DD_f$}};
\node at (0.8,5.7) [anchor=north east] {\Large{\color{red}$\tilde{\F}_e$}};
\node at (2.5,5.6) [anchor=north east] {\Large{\color{green}$\tilde{\DD}_f$}};
\node at (-0.3,5.8) [anchor=north east] {\Large{\color{red}$\tilde{\mathscr{F}}^N_n$}};
\node at (4.4,5.8) [anchor=north east] {\Large{\color{green}$\tilde{\mathscr{G}}^N_n$}};
\node at (-3.9,5.7) [anchor=north east] {\Large{$J_N(L,q)$}};
\node at (-4,4.5) [anchor=north east] {\Large{$L=\hat{\beta_n}$}};

\node at (7.2,3.5) [anchor=north east] {\large{$C_{2n,n(N-1)}$}};
\node at (7.2,7) [anchor=north east] {\large{$\tilde{C}_{2n,n(N-1)}$}};

\node at (-1,6.7) [anchor=north east] {$ <(\beta_n \cup \mathbb I_{n}) {\color{red}\mathscr{F}^N_n }, {\color{green}\mathscr{G}^N_n} >_{\delta_{N-1}}$};
\draw[->,color=black]             (-2.2,5.5) to node[yshift=-3mm,font=\small]{} (-3.8,5.5);
\draw [very thick,black!50!red,->](-1.2,2) to[out=60,in=110] (-0.2,2);
\draw [very thick,black!50!red,->](-1.2,2) to (-0.2,2);
\draw [very thick,black!50!red,->](4,2) to[out=60,in=110] (5,2);
\draw [very thick,black!50!red,->](4,2) to (5,2);
\draw [very thick,black!50!green,-](-0.7,1.2) to (-0.7,2.8);
\draw [very thick,black!50!green,-](-0.5,1.2) to (-0.5,2.8);

\draw [very thick,black!50!blue,->](6.6,5.7) to[out=-60,in=70] (6.6,3.5);
 \draw[very thick,black!50!red] (0.5, 5.4) circle (0.8);
\draw [very thick,black!50!green,-](4.7,1.2) to (4.7,2.8);
\draw [very thick,black!50!green,-](4.5,1.2) to (4.5,2.8);
 \draw[very thick,black!50!green] (2.2, 5.19) circle (0.9);

\
\draw (2,2) ellipse (3.8cm and 1.1cm);
\draw (2,5.4) ellipse (3.6cm and 1.1cm);

\end{tikzpicture}
\end{center} 
The intersection pairing takes values in the ring of Laurent polynomials over two variables. For our purpose, we will change these coefficients using certain specialisations, given in \ref{coeff}. We have sketched below a diagram with some of the functions involved:
\begin{center}
\begin{tikzpicture}
[x=1.2mm,y=1.4mm]

\node (b1)               at (0,15)    {$\mathbb Z[x^{\pm1},d^{\pm1}]$};
\node (b3) [color=red]   at (30,8)   {$\mathbb Z[q^{\pm1}]$};
\node (b4) [color=blue]  at (30,-5)   {$\mathbb Q(q)$};
\node (b5) [color=orange]  at (60,15)   {$\mathbb Q(s,q)$};

\draw[->,color=teal]             (b3)      to node[left,font=\large]{$\iota$}   (b4);
\draw[->,color=red]             (b1)      to node[left,xshift=3mm,yshift=-2mm,font=\large]{$\psi_{N-1}$}   (b3);

\draw[->,color=blue]             (b1)      to node[left,yshift=-2mm,font=\large]{$\alpha_{N-1}$}   (b4);
\draw[->,color=orange]   (b1)      to node[right,xshift=-2mm,yshift=3mm,font=\large]{$\gamma$}                        (b5);
\draw[->,color=blue]   (b5)      to node[right,font=\large]{$\delta_{N-1}$}                        (b4);

\end{tikzpicture}
\end{center}
\noindent
The main result is the following:
 \begin{theorem}\label{T:geom} ({\bf Topological model with specialised homology classes}) \label{T:model}\\
Let $n \in \mathbb{N}$. Then, for any colour $N \in \mathbb N$, there exist two homology classes
 $$ \tilde{\mathscr F}_n^N \in H_{2n, n(N-1)}|_{\alpha_{N-1}} \ \ \text{and} \ \ \  \tilde{\mathscr G}_n^N \in H^{\partial}_{2n, n(N-1)}|_{\alpha_{N-1}}$$ 
 such that for any oriented knot $L$ for which there exists a braid $\beta_{n} \in B_{n}$ with $L=\hat{\beta}_{n} $ (braid closure), the $N^{th}$coloured Jones polynomial of $L$ has the formula:
$$J_N(L,q)= \frac{1}{[N]_q}q^{-(N-1)w(\beta_n)}< (\beta_n \cup \mathbb I_{n}) \tilde{\mathscr{F}}_n^N , \tilde{\mathscr{G}}_n^N>|_{\alpha_{N-1}}.$$ 
Here, $\mathbb I_{n}$ is the trivial braid with $n$ strands.

\end{theorem}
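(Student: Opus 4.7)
The plan is to start from the Reshetikhin-Turaev definition of $J_N(L,q)$ and translate, step by step, each ingredient of the quantum construction into homological data on the covering $\tilde{C}_{2n, n(N-1)}$. Writing $L = \hat{\beta}_n$, the knot can be realised as a plat-style closure of $\beta_n \cup \mathbb{I}_n \in B_{2n}$: at the bottom one inserts $n$ copies of the coevaluation $\mathbb{I} \to V_N \otimes V_N^*$ and at the top $n$ copies of the evaluation $V_N^* \otimes V_N \to \mathbb{I}$. Through the Reshetikhin-Turaev functor, $J_N(L,q)$ is the scalar obtained by tracking these cups and caps through the braid action on the resulting tensor product, up to the standard writhe correction $q^{-(N-1)w(\beta_n)}$ and a factor $1/[N]_q$ coming from the quantum dimension.

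The first step I would take is to replace the finite-dimensional simple module $V_N$ by a Verma module $V^\lambda$ of $U_q(sl(2))$ with generic highest weight $\lambda$. This has two advantages: the weight-$n(N-1)$ subspace of $(V^\lambda)^{\otimes 2n}$ is exactly where Kohno's theorem identifies the braid action with the Lawrence representation $\HH_{2n, n(N-1)}$ over $\Z[x^{\pm1}, d^{\pm1}]$, after the two-variable specialisation $\gamma$ of the diagram above; and the $V_N$-coloured invariant is recovered from the $V^\lambda$-version by the further specialisation $\delta_{N-1}$, which also produces the $1/[N]_q$ normalisation.

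With this in hand, the two homology classes are constructed as follows. The product of $n$ coevaluations defines a specific vector in the weight-$n(N-1)$ subspace of $(V^\lambda)^{\otimes 2n}$; via Kohno's isomorphism this vector corresponds to a combination of the multifork classes $\tilde{\F}_e$ from (\ref{1}), and this combination I would declare to be $\tilde{\mathscr{F}}_n^N$. Dually, the composition of $n$ evaluations, paired through Kohno's identification and the Poincar\'e-Lefschetz duality (\ref{333}), yields a combination of barcode classes $\tilde{\DD}_f$ from (\ref{22}), which becomes $\tilde{\mathscr{G}}_n^N$. The Reshetikhin-Turaev scalar then translates into the topological pairing $\langle (\beta_n \cup \mathbb{I}_n)\, \tilde{\mathscr{F}}_n^N,\, \tilde{\mathscr{G}}_n^N \rangle$, and after the specialisation $\alpha_{N-1}$ together with the two normalisation factors one obtains the formula of the theorem.

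The main obstacle I expect is verifying the compatibility of Kohno's isomorphism with the ribbon structure, specifically with the cup and cap morphisms: Kohno's result is formulated as an isomorphism of $B_n$-representations on weight spaces, and to obtain the coloured Jones invariant one needs the topological intersection pairing, after specialisation under $\alpha_{N-1}$, to coincide with the quantum pairing induced by the coevaluation and evaluation on $U_q(sl(2))$-modules. Once a single elementary cup-cap pair is matched explicitly with an elementary fork-barcode pair, the $n$-fold statement follows by tensoring, and the braid intertwining property together with the normalisation factors yield the stated identity.
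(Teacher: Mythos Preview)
Your outline captures the broad shape of the argument, but there are several genuine gaps that the paper's proof has to work hard to close, and your proposed resolution of the ``main obstacle'' is not the one that actually works.

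First, the coevaluation does not land in a weight space of $V_N^{\otimes 2n}$: it lands in $V_N^{\otimes n}\otimes (V_N^*)^{\otimes n}$. To get into $V_N^{\otimes 2n}$ one needs an explicit isomorphism $\alpha_{n,N}\colon (V_N^*)^{\otimes n}\to V_N^{\otimes n}$, and the crucial point is that $(\mathrm{Id}^{\otimes n}\otimes\alpha_{n,N})$ applied to the coevaluation vector must lie in the \emph{highest} weight space $W^N_{2n,n(N-1)}$ (i.e.\ in $\ker E$), not merely in a weight space. Kohno's theorem is an identification of braid actions on highest weight spaces, so this is not a technicality. The paper constructs $\alpha_{n,N}$ by an inductive argument on $n$, producing coefficients that lift to two variables so that the resulting vector lies in $\hat W_{2n,n(N-1)}$ before specialisation; this is the content of Step~\ref{StepII} and is not automatic.

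Second, you cannot ``replace $V_N$ by a Verma module $V^\lambda$'' and run Reshetikhin--Turaev: the Verma module is infinite-dimensional and has no coevaluation. The paper instead keeps the coevaluation on $V_N$, then uses the inclusion $W^N_{2n,n(N-1)}\hookrightarrow \hat W^{N-1}_{2n,n(N-1)}$ into the Verma highest weight space (Step~\ref{StepIV}), checks that the braid action restricts correctly along this inclusion, and extends the evaluation to $\hat V_{N-1}^{\otimes 2n}$ by zero outside $V_N^{\otimes 2n}$. Only at that stage does Kohno's isomorphism $\Theta_{N-1}$ apply, and one obtains $\tilde{\mathscr F}_n^N$ as the preimage of the normalised coevaluation vector.

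Third, and most importantly, your proposed resolution --- matching a single elementary cup--cap pair with a single fork--barcode pair and then tensoring --- is not what happens and would not work: there is no statement that the Blanchfield pairing and the quantum evaluation/coevaluation pairing agree under Kohno's map. The paper instead treats the (extended) evaluation $\Ev^{\otimes n}_{\hat V_{N-1}}\circ\Theta_{N-1}$ as an abstract element of $\bigl(\HH_{2n,n(N-1)}|_{\psi_{N-1}}\bigr)^*$, passes to the field $\Q(q)$ via $\alpha_{N-1}$ so that the specialised Blanchfield pairing becomes non-degenerate over a field, and then \emph{defines} $\tilde{\mathscr G}_n^N$ to be the unique dual element representing this functional (Step~\ref{StepVII}). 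No explicit fork--barcode computation is performed; the existence of $\tilde{\mathscr G}_n^N$ is a consequence of non-degeneracy, and this is precisely why the specialisation $\alpha_{N-1}$ into $\Q(q)$ (rather than $\psi_{N-1}$ into $\Z[q^{\pm1}]$) appears in the statement.
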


\subsection*{ Strategy}
Our strategy is to start with an oriented knot $L$ and consider a braid $\beta_{n} \in B_{n}$ such that $L=\hat{\beta}_{n}$. We study the Reshetikhin-Turaev functor applied to the corresponding diagram at three main levels: the cups, the braid level $\beta_{n}\cup \mathbb I_n$ and the caps. 
\subsection*{ Step \ref{StepI}}The first step deals with the part of the diagram concerning the cups. Following the properties coming from representation theory, we notice that they lead to an analog of a so-called highest weight space, inside the tensor power of the finite dimensional representation and its dual $V_N^{\otimes n}\otimes (V^{*}_N)^{\otimes n}$. 
\subsection*{ Step \ref{StepII}}Then, the idea of the next part consists in adding an additional normalising function on the part corresponding to the last $n$ strands, such that by composing with the identity on the first $n$ strands and with this function on the last $n$, we arrive in a certain highest weight space $W^{N}_{2n,n(N-1)}$ inside $(V_N)^{\otimes 2n}$.
\subsection*{ Step \ref{StepIII}}Then, in the next part, we show that we can see the invariant through the Reshetikhin-Turaev procedure, with this additional normalising function added at the top of the cups and at the bottom of the caps. In other words, we conclude that the coloured Jones polynomial can be seen through this particular highest weight space inside the finite dimensional module. 
\subsection*{ Step \ref{StepIV}} 

This part has to deal with the subtlety concerning highest weight spaces. More specifically, in 2012, Kohno (\cite{Koh},\cite{Ito2}) proved that 
the highest weight spaces of the Verma module for $U_q(sl(2))$ are isomorphic as braid group representations with certain specialisations of the homological Lawrence representations. However, we are not yet in these bigger highest weight spaces. In \cite{Ito3}, it was mentioned that the highest weight spaces corresponding to finite dimensional $U_q(sl(2))$-representations do not yet have a homological correspondent and this is one of the reasons why there were not known topological models for coloured Jones polynomials. 

Our approach is to include highest weight spaces from the finite dimensional module inside the ones from the Verma module, and use those instead. 
\subsection*{ Step \ref{StepV}} In this step, we conclude the we can see the coloured Jones polynomials through these bigger highest weight spaces.
\subsection*{ Step \ref{StepVI}}Then, following the normalised coevaluation and the Kohno's isomorphism, we construct a homology class $\tilde{\mathscr {F}}^N_n \in \HH_{2n,n(N-1)}|_{\alpha_{N-1}}$ which encodes homologically the cups from the diagram. 
\subsection*{ Step \ref{StepVII}} This part concerns the caps from the diagram. Here the subtlety is that the intersection pairing between Lawrence representation and its dual is defined initially over a ring. Following the discussion from Section \ref{4}, we change the coefficients to their field of fractions and use the non-degeneracy of this pairing over this field. Then, in a dual manner, we describe an element in the dual Lawrence representation $\tilde{\mathscr {G}}^N_n \in \HH_{2n,n(N-1)}|_{\alpha_{N-1}}$ which encodes homologically the caps from the diagram. The braid part from the picture corresponds to the action on the Lawrence representation.
\subsection*{ Step \ref{StepVIII}}

Finally we show that the construction of the invariant $J_N(L,q)$ through the Reshetikhin-Turaev functor on the quantum side will correspond to the topological intersection form between the two classes from the Lawrence representation and its dual, with specialised coefficients.

We would like to mention that the homology classes $\tilde{\mathscr {F}}_n^N$ and $\tilde{\mathscr {G}}_n^N$ are intrinsic in the sense that they do not depend on the link. The only part where the link plays a role in this model is encoded in the action of the corresponding braid onto the homological representation. We would like to stress the fact that the specialisation of the coefficients depends on the choice of the colour $N$.    
 
\subsection*{Loop expansion of the coloured Jones polynomials} In \cite{Ito3}, Ito gave a homological formula for the loop expansion of the coloured Jones polynomials, as an infinite sum of traces of specialised Lawrence representations. Using this, he showed a homological proof for the Melvin-Merton-Rozansky conjecture, which predicts that the $0-$loop part in the loop expansion of $J_N(L,q)$ recovers the inverse of the Alexander invariant of the knot.

The topological model from Theorem \ref{T:model}, would also lead to a topological model for the loop expansion of coloured Jones polynomials, by specialising the variables $x$ and $d$ in a specific manner. It would be interesting to investigate the precise formula and whether this provides certain topological information concerning the coefficients of this loop expansion.

\subsection*{ Colour that goes to infinity}

Pursuing this line, we are interested in the parts from this model which depend on the colour. In the formula for $J_N(L,q)$ from Theorem \ref{T:model}, the parameter $N$ appears a priori in two places:
\begin{itemize}
\item The number of points in the configuration spaces: $Conf_{n(N-1)}(\mathbb D_{2n})$.
\item The specialisation of the coefficients $\alpha_{N-1}$.
\end{itemize}  
 
The advantage of the study described in Section \ref{s} consists in the property that it shows that the homology classes which lead to the coloured invariants, can be lifted to the homology groups before the specialisations with respect to the colour as follows:
\begin{equation}
\begin{cases}
\tilde{\mathscr F_n^N} \in H_{2n, n(N-1)}|_{\alpha_{N-1}}\dashrightarrow\mathscr F_n^N \in H_{2n, n(N-1)}|_{\gamma}\\
\tilde{\mathscr G_n^N} \in H_{2n, n(N-1)}|_{\alpha_{N-1}}\dashrightarrow\mathscr G_n^N \in H_{2n, n(N-1)}|_{\gamma}
\end{cases}
\end{equation}
(here $\gamma$ is just a morphism which enlarges the ring $\Z[x^{\pm},d^{\pm}]$ to the field $\Q(q,s)$). 
 
 \
 
\begin{theorem}({ \bf Topological model for coloured Jones polynomials with globalised homology classes})\label{T:geom}

For $n,N \in \N$ there exist homology classes
 $$ \mathscr F_n^N \in H_{2n, n(N-1)}|_{\gamma} \ \ \text{and} \ \ \  \mathscr G_n^N \in H^{\partial}_{2n, n(N-1)}|_{\gamma}$$ 
 such that if a knot $L=\hat{\beta}_{n} $ with $\beta_n \in B_n$, the $N^{th}$coloured Jones polynomial has the formula:
\begin{equation}
J_N(L,q)= \frac{1}{[N]_q}q^{-(N-1)w(\beta_n)}< (\beta_{n}\cup \unit_n) \mathscr{F}_n^N , \mathscr{G}_n^N>|_{\delta_{N-1}}. 
\end{equation}
\end{theorem}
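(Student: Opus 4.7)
The plan is to derive this result as a globalised refinement of Theorem \ref{T:model}. The diagram of coefficient morphisms above makes explicit that the specialisation $\alpha_{N-1} \colon \Z[x^{\pm 1},d^{\pm 1}] \to \Q(q)$ factors through the larger ring $\Q(q,s)$ as $\alpha_{N-1} = \delta_{N-1}\circ \gamma$. I would therefore seek classes $\mathscr F_n^N$ and $\mathscr G_n^N$ in the $\gamma$-specialised homology whose further images under $\delta_{N-1}$ recover the classes $\tilde{\mathscr F}_n^N$ and $\tilde{\mathscr G}_n^N$ produced by Theorem \ref{T:model}.

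First I would inspect the construction of $\tilde{\mathscr F}_n^N$ carried out in the step treating the cups. That class is obtained from the normalised coevaluation combined with Kohno's isomorphism between the braid group action on the highest weight space of the Verma module and the action on the specialised Lawrence representation $\HH_{2n,n(N-1)}$. Since Kohno's identification, as well as the coevaluation in the Verma module, are defined over the ring of universal parameters with the highest weight $s$ left as a free variable, one can run the same recipe without specialising $s$ to the value $N-1$, thereby producing a class $\mathscr F_n^N \in \HH_{2n,n(N-1)}|_{\gamma}$ such that $\delta_{N-1}(\mathscr F_n^N) = \tilde{\mathscr F}_n^N$. The analogous construction applied to the caps, with the normalised evaluation in place of the coevaluation, yields a dual class $\mathscr G_n^N \in \HH^{\partial}_{2n,n(N-1)}|_{\gamma}$ satisfying $\delta_{N-1}(\mathscr G_n^N) = \tilde{\mathscr G}_n^N$.

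Next I would invoke the naturality of the intersection pairing under change of coefficients. The pairing $\langle\,,\,\rangle$ of \eqref{333} is defined by counting geometric intersections of cycles in $C_{2n,n(N-1)}$ weighted by the local system $\varphi$, and consequently commutes with any ring homomorphism applied to the coefficients. In particular, applying $\delta_{N-1}$ after computing the pairing over $\Q(q,s)$ yields the same answer as evaluating the pairing directly over $\Q(q)$, so that
\begin{equation*}
\langle (\beta_n\cup \unit_n)\tilde{\mathscr F}_n^N,\, \tilde{\mathscr G}_n^N\rangle|_{\alpha_{N-1}} = \langle (\beta_n\cup \unit_n)\mathscr F_n^N,\, \mathscr G_n^N\rangle|_{\gamma}\bigr|_{\delta_{N-1}}.
\end{equation*}
Substituting this identity into the formula of Theorem \ref{T:model} gives the claimed expression for $J_N(L,q)$.

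The main obstacle lies in the first step: one must verify that every ingredient entering the construction of $\tilde{\mathscr F}_n^N$ and $\tilde{\mathscr G}_n^N$, namely the inclusion of finite dimensional highest weight spaces into those of the Verma module, the normalised (co)evaluation, and Kohno's identification, admits a natural formulation in which $q$ is generic and $s$ is a free parameter, so that the resulting structural coefficients genuinely lie in $\Q(q,s)$ before any substitution tied to the colour $N$. Once this globalisation is established and the compatibility $\delta_{N-1}\circ\gamma = \alpha_{N-1}$ is recorded, the remainder reduces to a diagram chase through the specialisation maps.
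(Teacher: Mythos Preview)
Your proposal is correct and follows essentially the same route as the paper: one lifts the normalised (co)evaluation and Kohno's isomorphism to the two-variable setting over $\Q(q,s)$, constructs $\mathscr F_n^N$ and $\mathscr G_n^N$ there, and then uses the factorisation $\alpha_{N-1}=\delta_{N-1}\circ\gamma$ together with naturality of the pairing to recover the formula of Theorem \ref{T:model}. The only point you leave implicit, which the paper makes explicit, is that for $\mathscr G_n^N$ one does not lift a barcode class directly but rather lifts the evaluation functional and then invokes non-degeneracy of $\langle\,,\,\rangle|_\gamma$ over the \emph{field} $\Q(q,s)$ to produce its dual; this is why the passage from $\Z[q^{\pm 1},s^{\pm 1}]$ to $\Q(q,s)$ via $j$ is needed, exactly parallel to the passage from $\Z[q^{\pm 1}]$ to $\Q(q)$ in the specialised setting.
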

 
In other words, in order to study the behaviour of these invariants when the colour goes to infinity, one should understand the phenomena that occur when we add $n$ points at a time in the configuration space in the punctured disc.
 \

\subsection*{Comparison to the Lawrence-Bigelow model for the Jones polynomial}

This paragraph concerns the relation to the model given by Bigelow-Lawrence for the Jones polynomial. For an oriented knot which can be seen as a closure of an $n$-braid, they described the Jones polynomial as a pairing between two classes: $$\mathscr F \in \HH_{2n,n}|_{\alpha_{1}} \ \ \ \ \ \ \ \  \text{                                                    and                                            } \ \ \ \ \ \ \   \mathscr G \in \HH^{\partial}_{2n,n}|_{\alpha_{1}}.$$ 
For the $N^{th}$ coloured Jones polynomial of this link, we have used:

$$\mathscr {F}_n^N \in \HH_{2n,n(N-1)}|_{\alpha_{N-1}} \ \ \text{                            and                                    } \ \ \mathscr {G}_n^N \in \HH^{\partial}_{2n,n(N-1)}|_{\alpha_{N-1}}.$$
 
 One could take the coloured Jones polynomial, consider it as a sum of Jones polynomials of cablings of the knot, and apply the Bigelow-Lawrence model for each term. We would like to emphasise that there is a difference that occurs between these two approaches. In the cabling picture, one would get the coloured Jones invariant as a sum of pairings where the number of punctures varies between $$n  \ \ \ \ \ \ \ \text{and} \ \ \ \ (N-1)n \ \ \ \ \ $$ as well as the the number of points in the configuration space. In other words, both parameters of the Lawrence representation that is used would depend on the colour $N$.
 
 In our model, we use always the $2n$-punctured disc, which means that the first component of the Lawrence representation is fixed. Then, just the second component, namely the number of points in the configuration space depends on the colour $N$.

\subsection*{Questions-Categorification} 

A feature of this model consists in the fact that the homology classes that lead to the coloured Jones polynomial are linear combinations of Lagrangian submanifolds in the configuration space. One of the main directions that we are interested to pursue is to study the Floer type homology coming from this model and whether this theory is invariant with respect to the choice of the braid. If it is, this will lead to a geometrical categorification for the coloured Jones polynomials. After that, the aim would be to understand the relation between this categorificaton and the symplectic Khovanov homology studied by Seidel-Smith and Manolescu (\cite{SM}, \cite{M1}) for the case of the Jones polynomial.

More specifically, the Lawrence representation $\HH_{2n,n(N-1)}$ and its dual  $\HH^{\partial}_{2n,n(N-1)}$ are generated by homology classes of $m$-dimensional Lagrangian submanifolds in a $\Z \oplus \Z$-covering a configuration space $\tilde{C}_{2n,n(N-1)}$, called ``multiforks" and ``barcodes". Both are lifts of Lagrangian submanifolds from $C_{2n,n(N-1)}$. This means that $(\beta_{n}\cup \mathbb I_n )\mathcal F^N_n$ and $\mathcal G^N_n$ are given by linear combinations of homology classes of Lagrangian submanifolds in $\tilde{C}_{2n,n(N-1)}$. The question would be to apply graded Floer homology to each graded intersection from before, and to study the invariance of the corresponding Floer homology groups $HF^N_m(\beta_n)$ with respect to the choice of the braid representative.

 {\bf{Structure of the paper}}:
 The paper has six main sections. In Section \ref{2}, we present the quantum group $U_q(sl(2))$ that we work with, properties about its representation theory and the definition of the coloured Jones polynomials. Then, Section \ref{3} contains the details about the homological Lawrence representation. Further on, in Section \ref{4} we define the dual Lawrence representation and we present a graded geometric intersection form that relates the two representations, with emphasis on the way of computing this form and on the non-degeneracy of this pairing. After that, in Section \ref{ident}, we present the identifications between quantum and homological representations of the braid group and discuss in detail the specialisation at natural parameters. Section \ref{s}, is devoted to the construction and the proof of the homological model for the coloured Jones polynomials, where the homology classes are constructed in the specialised homology groups. The last part, Section \ref{ss} is devoted to the lift of the homology classes from this topological model into the non-specialised homology groups.  

\
 
{\bf{Acknowledgements}}: I would like to thank very much my advisor, Professor Christian Blanchet for asking this beautiful problem of finding homological interpretations for quantum invariants, at the beginning of my PhD. I am very grateful for many useful and nice discussions and for his continuous support. Also, I am thankful to Dr Martin Palmer for discussions about homology with twisted coefficients and Lawrence representations.

This research was supported by grants from R\'egion Ile-de-France.\\  
I would also like to thank the Isaac Newton Institute for Mathematical Sciences, Cambridge, for support and hospitality during the programme 
``Homology theories in low dimensional topology'' where work on this paper was undertaken between 12 February-28 March 2017. During this time, I was also supported by EPSRC grant no EP/K032208/1.
The second version of this paper was prepared at the University of Oxford, and I acknowledge the support of the European Research Council (ERC) under the European Union's Horizon 2020 research and innovation programme (grant agreement No 674978).

\section{Representation theory of $U_q(sl(2))$}\label{2}
\subsection{$U_q(sl(2))$ and its representations}
\begin{definition}
Let $\qs, s$ parameters and consider the ring 
$$\Li_s:=\Z[\qs^{\pm1},s^{\pm1}].$$
Consider the quantum enveloping algebra $U_q(sl(2))$, to be the algebra over $\Li_s$ generated by the elements $\{ E,F^{(n)}, K^{\pm1} | \ n \in \mathbb N^{*} \}$ with the folowing relations:
\begin{equation*}
\begin{cases}
KK^{-1}=K^{-1}K=1; \ \ \ KE=\qs^2EK; & \ \ \ KF^{(n)}=\qs^{-2n} F^{(n)}K;\\
F^{(n)}F^{(m)}= {n+m \brack n}_{\qs} F^{(n+m)}&\\
[E,F^{(n+1)}]=F^{(n)}(q^{-n}K-q^{n}K^{-1}).
\end{cases}
\end{equation*}
The generators $F^{(n)}$ correspond to the "divided powers" of the generator $F$, from the version of the quantum group $U_q(sl(2))$ with generators $ \{ E,F,K^{\pm1} \}$. 
\end{definition}
Then, one has that $U_q(sl(2))$ is a Hopf algebra with the following comultiplication, counit and antipode:
\begin{equation*}
\begin{cases}
\Delta(E)= E\otimes K+ 1 \otimes E  & S(E)=-EK^{-1}\\
\Delta (F^{(n)}) = \sum_{j=0}^n q^{-j(n-j)}K^{j-n}F^{(j)}\otimes F^{(n-j)}& S(F^{(n)})=(-1)^{n}q^{n(n-1)}K^{n}F^{(n)}\\
\Delta(K)=K\otimes K & S(K)=K^{-1}\\
\Delta(K^{-1})=K^{-1}\otimes K^{-1} &  S(K^{-1})=K.
\end{cases}
\end{equation*}

 We will use the following notations:
$$ \{ x \} :=\qs^x-\qs^{-x} \ \ \ \ [x]_{\qs}:= \frac{\qs^x-\qs^{-x}}{\qs-\qs^{-1}}$$
$$[n]_\qs!=[1]_\qs[2]_\qs...[n]_\qs$$
$${n \brack j}_\qs=\frac{[n]_\qs!}{[n-j]_\qs! [j]_\qs!}.$$

Now we will describe the representation theory of $U_q(sl(2))$. 
In the sequel the abstract variable $s$ will be thought as being the weight of the Verma module.

\begin{definition} (The Verma module)

Consider $\hat{V}$ be the $\Li_s$-module generated by an infinite family of vectors $\{v_0, v_1,...\}$. The following relations define an $U_q(sl(2))$ action on $\hat{V}$:
\begin{equation}\label{action}  
\begin{cases}
Kv_i=s\qs^{-2i}v_i,\\
Ev_i=v_{i-1},\\
F^{(n)}v_i = {n+i \brack i}_\qs \prod _{k=0}^{n-1} (s\qs^{-k-i}-s^{-1} \qs^{k+i}) v_{i+n}.
\end{cases}
\end{equation}
\end{definition}

\subsection{Specialisations}

\
For our purpose, to arrive at the definition of the coloured Jones polynomial, it is needed to consider some specialisations of the previous quantum groups and its Verma representations.
\begin{definition}
Consider the following specialisations of the coefficients:

2) Let $h, \lambda \in \C$ and $\q=e^h$. In the following, we will have $e^{\lambda h}=\q^{\lambda}$. In this case, we specialise both variable $q$ and the highest weight $s$ to concrete complex numbers:
$$\eta_{\q,\lambda}:\Z[q^{\pm},s^{\pm}]\rightarrow\C$$ 
 $$ \eta_{\q,\lambda}(q)=e^h \ \ \ \ \  \eta_{\q,\lambda}(s)=e^{\lambda h}.$$
 
3) This is the case where the coloured Jones polynomial will be defined. Consider $q$ still as a parameter (it will be the parameter from the coloured Jones polynomial), and specialise the highest weight using $\lambda=N-1 \in \N$ a natural parameter:
$$ \eta_{\lambda}:\Z[q^{\pm},s^{\pm}]\rightarrow\Z[q^{\pm}]$$ 
$$ \eta_{\lambda}(s)= q^{\lambda }.$$
\end{definition}
Using these specialisations, we will consider the corresponding specialised quantum groups and their representation theory. We obtain the following:
\begin{center}
\label{tabel}
\begin{tabu} to 1.02\textwidth { | X[c] | X[c] | X[c] | X[c] | } 
 \hline
 Ring & Quantum Group & Representations &  Specialisations \\ 
\hline                                    
$ \Li_s=\Z[q^{\pm},s^{\pm}] $ & $ U_q(sl(2))$  & $\hat{V}$  & $1) q,s \text{  param                                      \ \ \ \ \ \ \ \ }  $ \\
\hline                                    
 $ \C $ & $\U_{\q,\lambda}=U_q(sl(2))\otimes_{\eta_{\q,\lambda}}\C$  & $\hat{V}_{\q,\lambda}=\hat{V}\otimes_{\eta_{q,\lambda}}\C$  & $2)(\q=e^h, \lambda)\in \C^2$ 
 $ \eta_{\q,\lambda} $ \\
\hline
 $ \Li=\Z[q^{\pm}] $ & $\U=\U_{\lambda}=$ $U_q(sl(2))\otimes_{\eta_{\lambda}}\Z[q^{\pm}]$  & $\hat{V}_{\lambda}=\hat{V}\otimes_{\eta_{\lambda}}\Z[q^{\pm}]$  $V_N\subseteq \hat{V}_{\lambda}$& $ 3) (q \text{ param,                                                                   \ \ \ \ \ \ \ \ \ \ \ \ \ \ \ \ } $ $ \ \ \ \ \ \ \ \ \ \ \ \ \ \ \ \ \lambda=N-1 \in \N)$  $ \ \ \ \ \ \ \ \ \ \ \ \ \ \ \ \ \ \ \ \ \ \ \ \ \eta_{\lambda} \ $ \\
\hline
\end{tabu}
\end{center}

\

\begin{remark}
If we specialise as above, $\U_{\lambda}$ and $\U_{\q,\lambda}$ become Hopf algebras and  
 $\hat{V}_{\q,\lambda}$ a $\U_{\q, \lambda}$-representation and
 $\hat{V}_{\lambda}$ a $\U_{\lambda}$-representation.

\end{remark}

\begin{lemma}\label{inclusion}
If $\lambda=N-1 \in \N$,  then $ \{ v_{0},...,v_{N-1} \} $ span an $N$-dimensional $\U_{\lambda}$-submodule inside $\hat{V}_{N-1}$.
Denote this module by 
$$V_{N}:=<v_{0},...,v_{N-1}> \subseteq \hat{V}_{N-1}.$$
\end{lemma}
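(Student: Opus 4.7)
The plan is to verify directly from the action formulas (after applying the specialisation $\eta_{\lambda}$ with $\lambda = N-1$) that $V_N = \langle v_0, \ldots, v_{N-1}\rangle$ is stable under each of the generators $K^{\pm 1}$, $E$, and $F^{(n)}$ of $\U_{\lambda}$. Since these generate the algebra, stability under them implies $V_N$ is a $\U_{\lambda}$-submodule.

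For $K^{\pm 1}$ this is immediate: after specialisation $Kv_i = q^{N-1-2i} v_i$, so each $v_i$ for $0 \leq i \leq N-1$ is an eigenvector and $V_N$ is preserved. For $E$, the action $Ev_i = v_{i-1}$ (with $v_{-1} = 0$) sends $v_i \in V_N$ to $V_N$ again for each $0 \leq i \leq N-1$. The only nontrivial case is $F^{(n)}$, where after specialisation
\begin{equation*}
F^{(n)} v_i = {n+i \brack i}_q \prod_{k=0}^{n-1}\bigl(q^{N-1-k-i} - q^{-(N-1)+k+i}\bigr)\, v_{i+n}.
\end{equation*}
If $i+n \leq N-1$ then $v_{i+n} \in V_N$ already, so nothing needs to be checked. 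The remaining case is $i+n \geq N$ with $0 \leq i \leq N-1$, where $v_{i+n}$ lies outside the span and we must show the scalar coefficient vanishes.

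The key observation is that the factor $q^{N-1-k-i} - q^{-(N-1)+k+i}$ is zero precisely when $k+i = N-1$. Under the constraints $i \leq N-1$ and $i+n \geq N$, the integer $k_0 := N-1-i$ satisfies $0 \leq k_0$ and $k_0 = N-1-i \leq n-1$, so $k_0$ belongs to the range of the product. Hence one of the factors vanishes, the whole product is zero, and $F^{(n)} v_i = 0 \in V_N$. This finishes the verification that $V_N$ is closed under all generators, so it is an $N$-dimensional $\U_{\lambda}$-submodule of $\hat{V}_{N-1}$.

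The only potential obstacle is a bookkeeping check that the vanishing factor really does occur in the displayed range; this is the small index computation $0 \leq N-1-i \leq n-1$ given above. No further structure theory of $U_q(sl(2))$ is needed since the statement is a direct consequence of the action formulas \eqref{action} specialised at $s = q^{N-1}$.
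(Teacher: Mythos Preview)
Your proof is correct and follows essentially the same approach as the paper: both verify stability under $K$, $E$, and $F^{(n)}$ directly from the action formulas, with the only nontrivial point being the index check that $k_0 = N-1-i$ lies in the range $\{0,\ldots,n-1\}$ of the product when $i+n \geq N$. Your write-up is in fact slightly more careful than the paper's in spelling out both inequalities for $k_0$.
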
 
\begin{proof}
We can see that $K$ acts by scalars and $E$ decreases the indices on the basis given above. We only have to see the action of the $F^{(n)}$ generators on this space. 
$$ F^{(n)}v_i= {n+i \brack i}_q \prod _{k=0}^{n-1} (q^{(N-1)-(k+i)}-q^{-[(N-1)-(k+i)]}) v_{i+n}. $$

Let $i\in \{0,...,N-1 \}$.

If $\bold {n<N-i}$, from the definition, the action of $F^{(n)}$ will remain inside the module:
$$F^{(n)}v_i \simeq v_{i+n} \in V_{N}.$$

For $\bold {n\geq N-i}$, we obtain that $n-1 \geq N-1-i$.

This shows us that in the previous formula concerning the $F^{(n)}$-action, there is a term corresponding to the parameter $k=N-1-i$ and actually its coefficient vanishes.

We obtain that $F^{(n)}v_i=0$, for any $n\geq N-i$.

This concludes the existence of the $N$-dimensional submodule $V_N$. 

\end{proof}

\subsection{The Reshetikhin-Turaev functor} 

In this section, we will present the general construction due to Reshetikhin and Turaev, that having as input any ribbon category  $ \mathscr C$, gives a functor from the category of tangles towards $\mathscr C$. In particular, this machinery leads to link invariants. We will present this method, using the category of representations of the quantum group $\U$. 
\begin{notation}
In the sequel, for any two representations $V$ and $W$, we consider the twist $\tau: V\otimes W\rightarrow W \otimes V$ which swaps their components by:
$$\tau(x \otimes y)=y \otimes x.$$
\end{notation}
In the following proposition, we denote by $U_\qs(sl(2))\hat{\otimes} U_\qs(sl(2))$ a completion of the module $U_\qs(sl(2)) \otimes U_\qs(sl(2))$, where we allow infinite formal sums of tensors products.
\begin{proposition}(\cite{JK},\cite{Ito})\label{P:RT}(Braid group action on the Verma module)

There exist an element $R \in U_\qs(sl(2))\hat{\otimes} U_\qs(sl(2))$ called $R$-matrix given by the following formula
$$R=\sum_{n=0}^{\infty} q^{\frac{n(n-1)}{2}}E^{n}\otimes F^{(n)}$$
 which leads to representations of the braid group. More precisely, for the Verma module $\hat{V}$ of $U_q(sl(2))$ on has the following well defined action:
\begin{equation}
\begin{aligned}
\varphi^V_n: B_n \rightarrow Aut_{U_q(sl(2))}\left(V^{\otimes n}\right) \ \ \ \ \ \ \ \ \ \ \ \ \ \ \ \ \ \ \ \ \ \ \\
\ \ \ \ \ \ \ \ \ \ \ \ \ \sigma _i^{\pm 1}\rightarrow Id_V^{\otimes (i-1)}\otimes (\mathscr R^{\pm 1} \circ \tau) \otimes Id_V^{\otimes (n-i-1)}.
\end{aligned}
\end{equation}
In this definition, we used the following notation: $$\mathscr R=C \circ R$$
$$C(v_i \otimes v_j)=s^{-(i+j)}q^{2ij}v_j \otimes v_i.$$
\end{proposition}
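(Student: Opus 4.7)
The plan is to verify three things: (i) the formal infinite sum defining $R$ gives a well-defined $\Li_s$-linear operator on $\hat V \otimes \hat V$, (ii) the resulting operator $\mathscr R \circ \tau$ is $U_q(sl(2))$-linear, and (iii) it satisfies the braid relations on $\hat V^{\otimes n}$.

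For (i), the key observation is that on the standard basis $\{v_i\}_{i \ge 0}$ one has $E v_i = v_{i-1}$ with the convention $v_{-1} := 0$, so $E^n v_i = 0$ whenever $n > i$. Hence on any simple tensor $v_i \otimes v_j$ the sum $\sum_{n \ge 0} q^{n(n-1)/2}\, E^n v_i \otimes F^{(n)} v_j$ has at most $i+1$ nonzero terms, and $R$ extends to a well-defined $\Li_s$-linear endomorphism of $\hat V \otimes \hat V$. Composing with the diagonal scalar $C$ and the swap $\tau$ is then immediate.

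For (ii) and (iii), the strategy is to establish the quasi-triangular identities
\begin{equation*}
R\,\Delta(x) = \Delta^{\mathrm{op}}(x)\,R, \qquad (\Delta \otimes \Id)(R) = R_{13}R_{23}, \qquad (\Id \otimes \Delta)(R) = R_{13}R_{12}.
\end{equation*}
These are checked by a direct (albeit lengthy) computation on the generators $E, F^{(n)}, K^{\pm 1}$, using the explicit comultiplication formulas from Section~\ref{2} and the $q$-binomial identities satisfied by the divided powers $F^{(n)}$. The second and third identities combined give the Yang-Baxter equation $R_{12}R_{13}R_{23} = R_{23}R_{13}R_{12}$, which after composing with the swap $\tau$ on each crossing translates into the braid relation for $\mathscr R \circ \tau$ on three consecutive factors. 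The far-commutation relation $\sigma_i \sigma_j = \sigma_j \sigma_i$ for $|i-j| \ge 2$ is automatic since the corresponding operators act on disjoint tensor factors. The first identity, combined with the design of $C$ to encode the ``missing Cartan part'' $q^{H \otimes H/2}$ via the scalar action of $K$ on weight vectors, gives $U_q(sl(2))$-linearity of $\mathscr R \circ \tau$. Invertibility of $\mathscr R$ follows from an explicit formal inverse of the same shape (with alternating signs and corrective $K$-factors), verified by term-by-term cancellation.

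The main obstacle I expect is the Cartan-part bookkeeping. Since the explicit formula for $R$ in the statement only records the ``nilpotent'' part $\sum_n q^{n(n-1)/2} E^n \otimes F^{(n)}$, one must check that the scalar prefactors $s^{-(i+j)} q^{2ij}$ appearing in $C(v_i \otimes v_j) = s^{-(i+j)} q^{2ij}\, v_j \otimes v_i$ are exactly those needed for both the intertwining identity and the Yang-Baxter equation to hold on the weight basis. Matching these scalars correctly, using the specific choice of comultiplication $\Delta(E) = E \otimes K + 1 \otimes E$ and the $q$-combinatorics of the divided powers $F^{(n)}$, is the fiddly heart of the argument and the place where most of the calculation lives.
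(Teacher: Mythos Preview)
Your outline is correct and follows the standard route for establishing braid group actions from quasi-triangular structures: finiteness of $R$ on weight vectors via local nilpotency of $E$, the intertwining identity $R\Delta = \Delta^{\mathrm{op}}R$ for $U_q(sl(2))$-linearity, and the hexagon identities yielding Yang-Baxter and hence the braid relations. Note, however, that the paper does not supply its own proof of this proposition; it is stated with attribution to \cite{JK} and \cite{Ito} and used as input. So there is no in-paper argument to compare against---your proposal is a sketch of what those references carry out in detail, and the ``fiddly heart'' you flag (matching the Cartan scalars in $C$ to the chosen comultiplication) is indeed where the bookkeeping lives in those sources.
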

For our purpose, concerning the coloured Jones polynomials, we are interested in finite dimensional representations of the quantum group $\U$. In the following part, we will see that the generic $R$-matrix induces by specialisation braid group actions onto tensor powers of finite dimensional $\U$-representations, which will be used for the definition of these invariants. 

\begin{proposition}

 1) There exist a braid group action onto the subcategory of $\U$-representations $Rep({\U})$ (finite dimensional or Verma module). This comes from the specialisation of the $R$-matrix $\mathscr R|_{\eta_{\lambda}\otimes\eta_{\lambda}} \in \U \hat{\otimes} \U$ and it has the following form:
$$R_{V,V}=\mathscr R|_{\eta_{\lambda}\otimes\eta_{\lambda}}\circ \tau \in Hom_{U_q(sl(2))}(V\otimes V,V \otimes V),  \ \ \ \forall \ V \in Rep(\U).$$

2) The category of finite dimensional representations of ${\U}$ has the following dualities:
$$\forall \ V_N \in Rep^{\text{f. dim}}_{\U}$$ 
\begin{align}\label{E:DualityForCat}
\tcoev_{V_N} :\, & \Li \rightarrow V_N\otimes V_N^{*} \text{ is given by } 1 \mapsto
  \sum v_j\otimes v_j^*,\notag
  \\
 \tev_{V_N}:\, & V_N^*\otimes V_N\rightarrow \Li \text{ is given by } f\otimes w
  \mapsto f(w),\notag
  \\
\coev_{V_N}:\, & \Li \rightarrow V_N^*\otimes V_N \text{ is given by } 1 \mapsto \sum
   v_j^*\otimes K^{-1}v_j,
  \\
\ev_{V_N}:\, & V_N\otimes V_N^{*}\rightarrow \Li \text{ is given by } v\otimes f
  \mapsto  f(Kv),\notag
\end{align}
for $\{v_j\}$ a basis of $V_N$ and $\{v_j^*\}$ the dual basis
of ${V_N}^*$.\\
 \end{proposition}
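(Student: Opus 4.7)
The plan is to handle the two parts separately. Part (1) reduces to a convergence/specialisation argument on top of the braid action already provided by Proposition \ref{P:RT}, and Part (2) to a direct verification of $\U$-equivariance and the zig-zag identities for the four duality morphisms.

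For part (1), the first task is to show that the formal infinite sum $R = \sum_{n\geq 0} q^{\frac{n(n-1)}{2}} E^n \otimes F^{(n)}$ descends to a well-defined operator on $V \otimes V$ for every $V \in \mathrm{Rep}(\U)$. If $V = V_N$ is finite-dimensional, then Lemma \ref{inclusion} gives $F^{(n)} v_i = 0$ whenever $n \geq N - i$, so only finitely many summands contribute on each basis tensor; if $V = \hat V_\lambda$ is the Verma module, then $E$ strictly decreases the index and eventually annihilates every $v_i$, so again only finitely many terms survive. Thus $\mathscr{R}|_{\eta_\lambda\otimes\eta_\lambda}$ is a genuine endomorphism of $V\otimes V$, and the braid relation for $R_{V,V}=\mathscr{R}|_{\eta_\lambda\otimes\eta_\lambda}\circ\tau$ follows from the Yang--Baxter equation for the universal $R$, which holds at the level of formal sums and is preserved upon evaluation on any tensor $v_i\otimes v_j\otimes v_k$ since only finitely many terms are nonzero. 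The fact that one obtains an honest braid group representation (rather than an action of some larger monoid) was already handled for the Verma module in Proposition \ref{P:RT}; the specialisation map $\eta_\lambda$ is a ring homomorphism and sends relations to relations, so the property is preserved on passing to $\U$.

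For part (2), I would first check that each of the four proposed maps is a morphism in $\mathrm{Rep}(\U)$, i.e.\ intertwines the quantum group action (with $V_N^*$ carrying the action $(x\cdot f)(v)=f(S(x)v)$). The asymmetry between $\tev/\tcoev$ and $\ev/\coev$, visible in the $K^{\pm 1}$ factors, reflects the pivotal structure of $U_q(sl(2))$: since $S^2(x)=KxK^{-1}$, the pivot $K$ is precisely what is needed to make $\ev,\coev$ into $\U$-linear maps compatible with $\tev,\tcoev$. The verification is a direct calculation on the generators $E, F^{(n)}, K^{\pm 1}$ using the coproduct formulas and the explicit antipode $S(F^{(n)})=(-1)^n q^{n(n-1)} K^n F^{(n)}$. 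Once $\U$-linearity is established, the zig-zag identities, for instance
$$(\Id_{V_N}\otimes \tev_{V_N})\circ (\tcoev_{V_N}\otimes \Id_{V_N})=\Id_{V_N},$$
follow from the dual basis relation $\sum_j v_j^*(w) v_j = w$; the identities involving $\ev, \coev$ additionally absorb the factors $K$ and $K^{-1}$, which cancel in the composition.

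The main obstacle is the combinatorial bookkeeping required for $\U$-linearity of $\coev_{V_N}$ and $\ev_{V_N}$: unpacking the dual action and combining it with the divided-power coproduct $\Delta(F^{(n)})=\sum_j q^{-j(n-j)} K^{j-n}F^{(j)}\otimes F^{(n-j)}$ produces several $q$-power sums that must telescope correctly, and the signs carried by $S(F^{(n)})$ must be tracked carefully. Once verified on generators, the intertwining extends to all of $\U$ automatically, and the specialised $R$-matrix together with the four dualities assembles $\mathrm{Rep}^{\mathrm{f.\,dim}}(\U)$ into the pivotal ribbon category underlying the Reshetikhin--Turaev construction used later in the paper.
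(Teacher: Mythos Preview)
Your proposal is a correct outline of the standard proof of these facts from quantum group theory. However, you should be aware that the paper itself does \emph{not} prove this proposition: it is stated without proof as foundational background material, with the explicit formula for $\mathscr{R}$ on $\hat V\otimes\hat V$ relegated to the subsequent Remark \ref{R:R} (citing \cite{Ito}). So there is no ``paper's own proof'' to compare against; the author treats these as known results from the literature on $U_q(sl(2))$ (see e.g.\ \cite{kass}).

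That said, your sketch is sound and matches the standard argument. One small point: in Part (1) you argue finiteness for $V_N$ via $F^{(n)}v_i=0$ and for $\hat V_\lambda$ via nilpotence of $E$; note that the latter argument (on the $E^n$ tensor factor) already suffices for \emph{both} cases, since $E^n v_i = 0$ for $n>i$ regardless of whether we are in $V_N$ or $\hat V_\lambda$. This is exactly why the formula in Remark \ref{R:R} has the sum running only up to $n=i$.
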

 
 \begin{remark} \label{R:R}
The action of $\mathscr{R}$ on the standard basis of the Verma module $\hat{V}\otimes \hat{V}$ is given in \cite{Ito}(Section 4.1):
$$ \mathscr{R}(v_i\otimes v_j)= s^{-(i+j)} \sum_{n=0}^{i} F_{i,j,n}(q) \prod_{k=0}^{n-1}(sq^{-k-j}-s^{-1}q^{k+j}) \ v_{j+n}\otimes v_{i-n}.$$
In the previous formula $F_{i,j,n}\in \Z[q^{\pm}]$ has the expression:
$$F_{i,j,n}(q)=q^{2(i-n)(j+n)}q^{\frac{n(n-1)}{2}} {n+j \brack j}_q.$$
\end{remark}
In the following part we will present the Reshetikhin-Turaev method of obtaining link invariants. Firstly, we see the definition for the category of tangles.

\begin{definition}
The category of oriented tangles $\mathscr T$ is defined as follows:
$$Ob(\mathscr T)= \{(\epsilon_1,...,\epsilon_m) | \ m \in  \N,\epsilon_i \in \{\pm1 \} \}.$$ 
$$Hom_{\mathscr T}\left( (\epsilon_1,...,\epsilon_m);(\delta_1,...,\delta_n)\right) = \{ \text{oriented tangles} \ \ \ \ \ \ \ \ \ \ \ \ \ \ \ \ \ \ \ \ \ \ \ \ \ \ \ \ \ \ \ \ \ \ \ \ \ $$
$$ \ \ \ \ \ \ \ \ \ \ \ \ \ \ \ \ \ \ \ \ \ \ \ \ \ \ \ \ \ \ T : (\epsilon_1, ..., \epsilon_m) \uparrow (\delta_1, ..., \delta_n)\}/ \text{isotopy}.$$
\end{definition}
\begin{remark}
The tangles  $\mathscr T$ have to respect the signs $\epsilon_i$ which are at their boundaries.
Once we have such a tangle, it has an induced orientation, coming from the signs, using the following conventions:
$$(-) \ \downarrow,  \ \ \ (+) \ \uparrow$$
\end{remark}
\begin{theorem}(Reshetikhin-Turaev)\\
For any finite dimensional $\U$-representation $V$, there exist an unique monoidal functor 
$$\F_V: \mathscr T\rightarrow Rep^{f. dim}(\U)$$
such that it respects the following local relations:
$$1) \ \F_V((+))=V; \ \ \ \F_V((V,-))=V^{*} \ \ \ \ \ \ \ \ \ \ \ \ $$ 
\begin{align*} 
2) \F_V(
\tikz[x=1mm,y=1mm,baseline=0.5ex]{\draw[->] (3,3)--(0,0); \draw[line width=3pt,white] (0,3)--(3,0); \draw[->] (0,3)--(3,0);}
) &= R_{V,V} \in Hom(V\otimes V\rightarrow V \otimes V) \\
\F_V(
\tikz[x=1mm,y=1mm,baseline=0.5ex]{\draw[<-] (0,3) .. controls (0,0) and (3,0) .. (3,3); \draw[draw=none, use as bounding box](-0.5,0) rectangle (3,3);}
) &= \tcoev_{V} :\Z[q^{\pm}]\rightarrow V \otimes V^{*} \\
\F_V(
\tikz[x=1mm,y=1mm,baseline=0.5ex]{\draw[<-] (3,0) .. controls (3,3) and (0,3) .. (0,0); \draw[draw=none, use as bounding box](0,0) rectangle (3.5,3);}
) &= \ev_{V} :V \otimes V^{*} \rightarrow \mathbb \Z[q^{\pm}].
\end{align*}
\end{theorem}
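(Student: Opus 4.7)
The plan is to invoke the standard generators-and-relations presentation of the oriented tangle category. Every morphism in $\mathscr{T}$ decomposes, via planar isotopy, as a vertical composition of horizontal tensor products of elementary tangles: identity strands in both orientations, the four elementary cups and caps, and the crossings of each sign with each orientation pattern. Requiring $\F_V$ to be monoidal forces the value on objects to be $\F_V(\epsilon_1,\dots,\epsilon_m)=V^{\epsilon_1}\otimes\cdots\otimes V^{\epsilon_m}$ with $V^{+}=V$ and $V^{-}=V^{*}$, and the four local relations stated in the theorem, combined with functoriality, determine $\F_V$ on the remaining generators: the two cups and caps that are not prescribed directly are obtained from those listed by composing with a crossing and applying the snake relations, and the negative crossings and mixed-orientation crossings are recovered from $R_{V,V}$ by conjugation with $\coev$ and $\ev$. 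This yields both a candidate functor and its uniqueness.

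The substantive step is well-definedness: showing that any two diagrammatic decompositions of the same isotopy class of tangle are sent to equal morphisms in $Rep^{f.dim}(\U)$. I would appeal to the presentation of $\mathscr{T}$ as the free ribbon category on a self-dual object, whose complete set of defining relations consists of: (i) the snake/zig-zag identities $(\ev_V\otimes\Id_V)\circ(\Id_V\otimes\tcoev_V)=\Id_V$ and its three symmetric variants; (ii) the Reidemeister~II and III moves for $R_{V,V}$; (iii) naturality of the braiding with respect to cups and caps; and (iv) the ribbon / framed Reidemeister~I relation, which equates a positive curl on $V$ with multiplication by the twist $\theta_V$. Each of these translates into an identity of $\U$-linear maps. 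The snake identities follow directly from \eqref{E:DualityForCat} together with the pivotal compatibility between $\coev$ and $\tcoev$ provided by the $K$-twist. The Reidemeister~II and III moves reduce to the quantum Yang--Baxter equation satisfied in $\U\hat{\otimes}\U\hat{\otimes}\U$ by the $R$-matrix of Proposition~\ref{P:RT}. The braiding--duality compatibility reduces to the identity $(S\otimes\Id)(R)=R^{-1}$ combined with the explicit formula $\ev_V(v\otimes f)=f(Kv)$.

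The main obstacle will be the ribbon relation, since this is the only ingredient that goes beyond the Yang--Baxter equation and pure Hopf-algebra axioms: one must show that the endomorphism of $V$ assigned to a positive curl is a scalar multiple of $\Id_V$, with the scalar being the action of the ribbon element $\theta_V$ constructed from the $R$-matrix and the pivotal element $K$. For the finite-dimensional simple modules $V_N$ I would check this directly from the action \eqref{action} together with the explicit expansion $R=\sum_{n\geq 0}q^{n(n-1)/2}E^{n}\otimes F^{(n)}$, obtaining a definite scalar on each weight vector by finiteness of the sum. Once this last relation is verified, $\F_V$ is a well-defined monoidal functor, simultaneously giving existence and (via the prescription on generators) uniqueness.
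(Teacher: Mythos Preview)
The paper does not prove this theorem. It is stated as a classical result attributed to Reshetikhin and Turaev and then used as a black box to define the coloured Jones polynomials; no argument is given beyond the statement itself. So there is no ``paper's own proof'' to compare your proposal against.

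Your sketch is the standard textbook approach (as in \cite{kass} or Turaev's book): present $\mathscr{T}$ by generators and relations, prescribe $\F_V$ on generators, and verify the relations using the ribbon structure on $Rep^{f.dim}(\U)$. Two small points worth tightening. First, the phrase ``free ribbon category on a self-dual object'' is not quite right: self-duality would mean $V\cong V^{*}$, which is not assumed here. What you want is the free ribbon category on a single object, which automatically carries a dual; the universal property then says that any choice of object $V$ in a ribbon category extends uniquely to a ribbon functor. Second, in your discussion of the ribbon relation you propose to check directly on $V_N$ that the curl acts by a scalar. This is fine for the application in the paper (only the $V_N$ are used), but the theorem as stated is for an arbitrary finite-dimensional $\U$-module $V$, so strictly you need the ribbon element of $\U$ to act by a central element, not a computation on a specific simple module. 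In practice one either restricts the statement to the $V_N$ (which is all the paper needs) or invokes the existence of a ribbon element in a suitable completion of $\U$.
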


\subsection{The coloured Jones polynomial $J_N(L,q)$ }

So far, we have seen the algebraic structure coming from the representation theory of the quantum group $\U$ and the Reshetikhin-Turaev construction. Now, we will present how this machinery is actually a tool that leads to quantum invariants for knots. 
\begin{definition} (The coloured Jones polynomial-V. Jones)\\
Let $N$ be a natural number and $L$ a link. 
Then $L \in \Hom_{\mathscr T}(\emptyset,\emptyset)$.\\
The $N'^{th}$ coloured Jones polynomial is defined from the Reshetikhin-Turaev functor, using the representation $V_N \in Rep(\U)$ as colour, in the following way:
$$J_N(L,q):= \F_{V_N}(L) \in \Z[q^{\pm}] $$ \label{D:Jones}
(in this definition, the functor applied to the link gives a morphism from $\Z[q^{\pm}]$ to $\Z[q^{\pm}]$, which is identified with a Laurent polynomial).
\end{definition} 
This definition can be seen using the braid group action and the dualities from the category of $\U$-representations.
\begin{notation}
Let $V_1,...,V_n$ be vector spaces and for $i\in \N$ we define the twisted operator as:
$$\tau_{(i,n)}:V_1\otimes...\otimes V_n \rightarrow V_1 \otimes V_{i-1} \otimes V_n \otimes V_{i+1}...\otimes V_{n-1} \otimes V_i$$
which interchanges the $i^th$ and $n^{th}$ components.

We denote by 
$$\ev^i_{V}:V^{\otimes i}\otimes(V^{\star})^{\otimes i}\rightarrow V^{\otimes i-1} \otimes (V^{\star})^{\otimes i-1}. $$
$$\tcoev^i_{V}:V^{\otimes i-1}\otimes (V^{\star})^{\otimes i-1}\rightarrow V^{\otimes i} \otimes (V^{\star})^{\otimes i}. $$
 the evaluation ( and coevaluation ) corresponding to the first and last component, which are defined as follows:
\begin{align*}
\begin{cases}
\ev^i_{V}:= (\ev_{V}\otimes Id^{\otimes 2i-2})\circ \tau_{2,2i}\\
\tcoev^i_{V}:=\tau_{2,2i} \circ (\tcoev_{V}\otimes Id^{\otimes 2i-2}).
\end{cases}
\end{align*}

We notice that $\ev^1_{V}=\ev_{V}$ and $\tcoev^1_{V}=\tcoev_{V}$.
\end{notation}
Using the notation, we define the evaluation and coevaluation corresponding to the cups and caps from a braid closure as follows.
\begin{definition}\label{D:1} Consider the following morphisms of $\U$-representations:
\begin{align*}
\begin{cases}
\ev^{\otimes n}_{V_N}:V_N^{\otimes n}\otimes (V_N^{\star})^{\otimes n}\rightarrow \Z[q^{\pm 1}]\\
{\tcoev}^{\otimes n}_{V_N}: \Z[q^{\pm 1}] \rightarrow V_N^{\otimes n} \otimes (V_N^{\star})^{\otimes n}
\end{cases}
\end{align*}
given by the expressions:
\begin{align*}
\begin{cases}
\ev^{\otimes n}_{V_N}=\ev^{1}_{V_N} \circ....\circ \ev^{n}_{V_N}\\
{\tcoev}^{\otimes n}_{V_N}=\tcoev^{n}_{V_N} \circ....\circ \tcoev^{1}_{V_N}.$$
\end{cases}
\end{align*}
\end{definition}
\begin{notation}(The trivial braid)\label{N:1}\\ 
For the next formula, we denote by:\\
1) $\mathbb I_n$ the trivial braid with $n$ strands in $B_n$, all oriented upwards.\\
2) $\bar{\mathbb I}_n$ the trivial braid with $n$ strands in $B_n$, all oriented downwards.
\end{notation}
\begin{proposition}(\cite{Ito3})(Coloured Jones polynomial from a braid presentation)\label{P:1}
Let us fix $N \in \N$.  Consider $L$ be an oriented knot and $\beta \in B_n$ such that $L=\hat{\beta}$ (braid closure). We denote by $$w:B_n\rightarrow \Z$$ the map given by the abelianisation. Then, the Reshetikhin-Turaev construction leads to the following formula:
$$J_N(L,q)=\frac{1}{[N]_q}q^{-(N-1)w(\beta)}\left( \ev^{\otimes n}_{V_N} \  \circ \ \F_{V_N} (\beta_{n} \otimes \bar{\mathbb I}_n ) \  \circ \ {\tcoev}^{\otimes n}_{V_N} \right) (1).$$
\end{proposition}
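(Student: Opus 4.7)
The plan is to apply the Reshetikhin--Turaev functor $\F_{V_N}$ of Definition \ref{D:Jones} to a specific tangle decomposition of the braid closure $\hat{\beta}_n$ and then to account for the two scalar factors that reconcile the raw output of $\F_{V_N}$ on that decomposition with the isotopy invariant $J_N(L,q)$.

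First I would realise $L=\hat{\beta}_n$ as a morphism $\emptyset \to \emptyset$ in the category $\mathscr{T}$ of oriented tangles, factored into three horizontal layers: at the bottom, $n$ nested coevaluation arcs create $2n$ strands with the leftmost $n$ oriented upward and the rightmost $n$ oriented downward; in the middle, the tangle $\beta_n \otimes \bar{\mathbb{I}}_n$ acts on these $2n$ strands (the braid $\beta_n$ on the upward group and the trivial downward braid on the other); at the top, $n$ nested evaluation arcs close each upward strand with its downward partner. The twist-insertions $\tau_{(i,n)}$ in Definition \ref{D:1} are precisely the combinatorial device needed so that these $n$ nested cups and caps assemble into $\tcoev^{\otimes n}_{V_N}$ and $\ev^{\otimes n}_{V_N}$ respectively. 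Applying the monoidal functor $\F_{V_N}$ to this decomposition, and invoking the local rules in its defining theorem, produces the raw Reshetikhin--Turaev evaluation
\begin{equation*}
\left( \ev^{\otimes n}_{V_N} \circ \F_{V_N}(\beta_n \otimes \bar{\mathbb{I}}_n) \circ \tcoev^{\otimes n}_{V_N} \right)(1).
\end{equation*}

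Next I would install the two normalising factors. The scalar $1/[N]_q$ is the conventional normalisation forcing $J_N$ to take the value $1$ on the unknot: evaluating the raw expression for $n=1$ and $\beta_1$ trivial yields $\ev_{V_N}\circ\tcoev_{V_N}(1)=\qdim V_N=[N]_q$ by the duality formulas in \eqref{E:DualityForCat} (the pivotal element $K$ in the definition of $\ev$ producing exactly the quantum dimension). The scalar $q^{-(N-1)w(\beta)}$ is a framing correction: the $R$-matrix $\mathscr{R}=C\circ R$ of Proposition \ref{P:RT}, specialised at $s=q^{N-1}$, defines a braiding on framed tangles; the blackboard framing of the braid closure differs from the zero framing by $w(\beta)$ (the abelianisation of $\beta$, i.e. the algebraic crossing count), and each unit of framing change multiplies the raw invariant by the twist eigenvalue of the ribbon element on $V_N$, which under the specialisation equals $q^{N-1}$. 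Dividing by $q^{(N-1)w(\beta)}$ therefore converts the framed-link value produced by the braid-closure diagram into the isotopy invariant $J_N(L,q)$.

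The main technical obstacle is verifying the twist-eigenvalue calculation underlying the writhe correction: one needs to compute the scalar by which the ribbon element acts on $V_N$ with respect to the specific $\mathscr{R}$-matrix described in Remark \ref{R:R}. I would pin this down by evaluating $\F_{V_N}$ on a single positively-framed unknot (a single positive kink closed by a matching $\ev/\tcoev$ pair), using the explicit action of $\mathscr{R}$ on the basis vectors $v_i \otimes v_j$ from Remark \ref{R:R} together with the pivotal element $K$ from \eqref{E:DualityForCat}, and comparing the result against the unknot value $[N]_q$; the discrepancy identifies the eigenvalue as $q^{N-1}$. Multiplicativity of this factor over the crossings of $\beta$ then yields the global correction $q^{-(N-1)w(\beta)}$, and combining with the two normalisations gives the claimed formula.
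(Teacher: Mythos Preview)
The paper does not actually prove this proposition: it is stated with a citation to \cite{Ito3} and no proof is supplied. So there is no ``paper's own proof'' to compare against; the proposition is simply imported as a known fact about the Reshetikhin--Turaev construction in the conventions of \cite{Ito3}.

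Your outline is the standard argument and is correct in structure: decompose the braid closure into nested cups, $\beta_n\otimes\bar{\mathbb I}_n$, and nested caps; apply the monoidal functor $\F_{V_N}$ level by level; then insert the unknot normalisation $1/[N]_q$ and the framing correction. The one point worth flagging is your assertion that the twist eigenvalue on $V_N$ equals $q^{N-1}$. This is sensitive to the precise form of the $R$-matrix, and the paper's $\mathscr R=C\circ R$ with $C(v_i\otimes v_j)=s^{-(i+j)}q^{2ij}v_i\otimes v_j$ differs from the ``textbook'' $U_q(sl_2)$ $R$-matrix by omitting the Cartan factor $q^{\lambda^2/2}$; this shift is exactly what brings the framing correction down from the usual $q^{(N^2-1)/2}$-type factor to $q^{N-1}$. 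Your proposed method---evaluate $\F_{V_N}$ on a single positive kink using Remark~\ref{R:R} and the pivotal $K$ from \eqref{E:DualityForCat}, and compare with $[N]_q$---is the right way to extract this, and carrying it out would complete the argument.
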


As we have seen so far, the construction that leads to the definition of $J_N(L,q)$ is purely algebraic and combinatorial. We are interested in a geometrical interpretation for this invariant. For this purpose, we study the Reshetikhin-Turaev functor applied onto certain intermediary levels of the knot diagram. More precisely, we start with $L$ as a closure of a braid $\beta\in B_{n}$. Then, we split the knot diagram into three main parts as follows: 

1) the evaluation                              
$ \ \ \ \ \ \ \ \ \ \ \  \tikz[x=1mm,y=1mm,baseline=0.5ex]{\draw[<-] (3,0) .. controls (3,3) and (0,3) .. (0,0); \draw[<-] (6,0) .. controls (6,6) and (-3,6) .. (-3,0); \draw[draw=none, use as bounding box](0,0) rectangle (3.5,3);}
$

2) braid level                                    $ \ \ \ \ \ \ \ \ \ \ \ \ \ \ \ \beta_n \otimes \bar{\unit}_{n}$

3) the coevaluation                                 
$ \ \ \ \ \ \ \  \  \tikz[x=1mm,y=1mm,baseline=0.5ex]{\draw[<-] (0,3) .. controls (0,0) and (3,0) .. (3,3); \draw[<-] (-3,3) .. controls (-3,-3) and (6,-3) .. (6,3); \draw[draw=none, use as bounding box](-0.5,0) rectangle (3,3);}
$

We plan to investigate what is happening with $\F_{V_N}$ on each of these main levels. The interesting part and the starting point in our description is the fact that at the level of braid group representation, there is a homological counterpart for the quantum representation, called Lawrence representation(\cite{Law},\cite{Koh}). This relation is established using the notion of highest weight spaces. 

\subsection{Highest weight spaces}
In this part, we will introduce and discuss the properties of certain vector subspaces which live inside the tensor powers of a fixed representation (we will refer to the ones defined in the table \ref{tabel}). These subspaces are rich objects which carry interesting braid group representations, as we will see in the sequel.
\begin{definition}\label{D:index}
For two natural numbers $n,m \in \N$ and a fixed parameter $N \in \N$, consider the following indexing sets:
$$E_{n,m}:= \{e=(e_1,...,e_{n-1})\in \N ^{n-1}\mid e_1+...+e_{n-1}=m \}$$
$$E^N_{n,m}:=\{ e=(e_1,...,e_{n-1}) \in E_{n,m} \mid e_1,...,e_{n-1}\leq N-1 \}$$
$$E^{\geq N}_{n,m}:=\{ e=(e_1,...,e_{n-1}) \in E_{n,m} \mid \exists \ i, \ e_i \geq N \}.$$

Also, for an element $e=(e_1,...,e_n)\in \N ^n$, let us denote:
$$v_e:= \hat{v}_{e_1}\otimes... \otimes \hat{v}_{e_n}.$$
\end{definition}
\begin{remark} \label{R:dd}
The set $E_{n,m}$ has elements which are partitions of the natural number $m$ into $n-1$ natural numbers (allowing zero). Its cardinal is well known and we will use the notation: 
$$d_{n,m}:= \text{card} \ (E_{n,m})= {n+m-2 \choose m}$$
\end{remark}
Let us fix $n,m\in \N $ two natural numbers. Now we will define highest weight spaces which correspond to these input, as follows.
\begin{definition}(Highest weight spaces)\\\label{D:2}  
{\bf 1) The case of two parameters $(q,s)$}\\
The $n^{th}$-weight space of the generic Verma module $\hat{V}$
corresponding to the weight $m$ is defined by:
$$ \hat{V}_{n,m}:= \{v \in \hat V^{\otimes n} \mid Kv=s^nq^{-2m}v    \}.$$
The highest weight space of the generic Verma module 
$\hat{V}^{\otimes n}$ corresponding to the weight $m$:
$$\hat{W}_{n,m}:=\hat{V}_{n,m} \cap Ker E.$$
{\bf 2) Specialisation with two complex numbers}\\
Let $h, \lambda \in \C $ and $\q=e^h$.\\
The weight space of $ \hat{V}^{\otimes n}_{\q,\lambda}$ corresponding to the weight $m$:
$$ \hat{V}^{\q,\lambda}_{n,m}:=\{v\in \hat{V}^{\otimes n}_{\q,\lambda} \mid Kv=q^{n\lambda-2m}v \}.$$
The highest weight space of the Verma module $\hat{V}^{\otimes n}_{\q,\lambda}$  corresponding to the weight $m$:
$$ \hat{W}^{\q,\lambda}_{n,m}:=\hat{W}_{n,m}|_{\eta_{q,\lambda}}.$$
{\bf 3) The case with $q$ parameter and $\lambda$ natural number }

{\bf a) Inside the Verma module $\hat{V}_{N-1} ^{\otimes n}$}

Consider $\lambda=N-1 \in \N$.\\ 
The weight space of $ \hat{V}^{\otimes n}_{N-1}$ of weight $m$:
$$ \hat{V}^{N-1}_{n,m}:=\{v\in \hat{V}^{\otimes n}_{N-1} \mid Kv=q^{n\lambda-2m}v \}.$$
The highest weight space for Verma module $\hat{V}^{\otimes n}_{N-1}$  corresponding to the weight $m$:
$$ \hat{W}^{N-1}_{n,m}:=\hat{W}_{n,m}\mid_{\eta_{N-1}}.$$

{\bf b) Inside the finite dimensional module $V_{N} ^{\otimes n}$}

The weight space for the finite dimensional representation $V^{\otimes n}_{N}$ of weight $m$:
$$V^N_{n,m}:=\{v\in V^{\otimes n}_{N} \mid Kv=q^{n(N-1)-2m}v \}.$$
The highest weight space of the finite dimensional representation $V^{\otimes n}_{N}$  corresponding to the weight $m$:
$$ W^N_{n,m}:=\hat{W}_{n,m}\mid_{\eta_{N-1}}\cap \ V^{\otimes n}_{N}.$$
\end{definition}

\begin{remark}\label{R:1}
Since $V_N \subseteq \hat{V}_{N-1}$, we have $$v_e \in V^{\otimes n}_N \text{ if and only if } e \in E^N_{n,m}.$$
This will happen also at the level of (highest) weight spaces:
$$V_{n,m}^N \subseteq \hat{V}_{n,m}^{N-1}  \ \ \ \ \text {and} \ \ \ \  W_{n,m}^N \subseteq \hat{W}_{n,m}^{N-1}.$$

\end{remark}
\begin{remark}\label{R:ddd}
{\bf 1) Basis for the weight spaces from Verma module}\\
One can see easily that:
\begin{equation*}
\begin{cases}
\hat{V}_{n,m}= <v_e  \mid e \in E_{n+1,m}>_{\Li_s} \ \ \ \ \subseteq \hat{V}^{\otimes n}.\\
 \hat{V}^{N-1}_{n,m}= <v_e \mid e \in E_{n+1,m}>_{\Z[q^{\pm}]} \subseteq \hat{V}^{\otimes n}_{N-1}.
\end{cases}
\end{equation*}
{\bf 2) Basis for the weight space of the finite dimensional module $V_N$}:\\
Using the remark \ref{R:1} and the previous part $1)$, we conclude that:
$$V_{n,m}^N= <v_e \mid e \in E^{N}_{n+1,m}>_{\Z[q^{\pm}]} \subseteq V^{\otimes n}_{N}.$$
\end{remark}
\begin{remark}
Moreover, if we denote by 
\begin{equation*}
V^{\geq N}_{n,m}= <v_e \mid e \in E^{\geq N}_{n+1,m}>_{\Li_s}\subseteq \hat{V}^{\otimes n}_{N-1}
\end{equation*}
then we have the following splitting as vector spaces:
\begin{equation*}
\hat{V}^{N-1}_{n,m}= V^N_{n,m} \oplus V^{\geq N}_{n,m}
\end{equation*}
\end{remark}
From the remark \ref{R:dd}, it follows that the dimensions of these weight space are:
$$\text{dim} \left( \hat{V}_{n,m}\right)=\text{dim} \left(\hat{V}^{N-1}_{n,m}\right)=d_{n+1,m}={n+m-2 \choose m}.$$

\subsection{Bases for heighest weight spaces}
As we have seen above, there is a strightforward definition of bases in weight spaces. Unlike this situation, for the case of highest weight spaces this becomes a more subtle question. In \cite{JK}, there were presented bases in the highest weight spaces from the Verma module, as well as connections between highest weight spaces and weight spaces, which correspond to different parameters $n$ and $m$. 

Moreover, Jackson and Kerler made the connection towards homological braid group representations, proving that for the parameter $m=2$, the braid group action onto the highest weight space $\hat{W}_{n,m}$ corresponds to the homological Lawrence-Bigelow-Krammer representation (\cite{Law1}, \cite{Big0},\cite{Kr1},\cite{Kr2}). They conjectured that this identification is true for any natural number $m$. Later on, Kohno (\cite{Ito},\cite{Koh}) proved this conjecture. We will discuss in details this identifications in section \ref{ident}.

 Now, we will present from \cite{Ito} some "good" bases for the highest weight spaces, that will have a role in the identification between quantum and homological representations of the braid groups. 
\begin{definition} (Basis for $\hat{W}_{n,m}$)
For $e \in E_{n+1,m}$, we denote by:
$$v^s_e:= s^{\sum_{i=1}^ni e_i}v_{e_1}\otimes... \otimes v_{e_n}.$$
Notice that $\mathscr B_{{\hat V}_{n,m}}:=\{ v_e^s \mid e \in E_{n+1,m} \}$ form a basis for $\hat{V}_{n,m}$.
\end{definition}
In the sequel, the highest weight spaces $\hat{W}_{n,m}$ will be identified with a certain subspace of the weight spaces $\hat{V}_{n,m}$.

Let $\iota: E_{n,m} \rightarrow E_{n+1,m}$ be the inclusion:
$$\iota ((e_1,...,e_{n-1}))=(0,e_1,...,e_{n-1}).$$
Denote by $$\hat{V}'_{n,m}:=\Li_s \hat{v}_0 \oplus \hat{V}_{n-1,m}\subseteq \hat{V}_{n,m}.$$ 
Then, the set $\mathscr B_{\hat{V}'_{n,m}}:= \{ \hat{v}^s_{\iota(e)} \mid e \in E_{n,m} \}$ gives a basis for the space $\hat{V}'_{n,m}$.
\begin{proposition}\cite{Ito}\label{P:bqu}
Consider the function
$\phi: \hat{V}'_{n,m}\rightarrow \hat{W}_{n,m}$ described by the formula:
$$\phi (w):=\sum_{k=0}^{m} (-1)^k s^{-k(n-1)} q^{2mk-k(k+1)} v_k \otimes E^k(w).$$
Then $\phi$ is an isomorphism of $\Li_s$-modules. Moreover, the set $$\mathscr B_{\hat{W}_{n,m}}=\{ \phi\left(v^s_{\iota(e)}\right) \mid e \in E_{n,m}\}$$ describes a basis for the generic highest weight space  $\hat{W}_{n,m}$.
Using the remarks \ref{R:dd} and \ref{R:ddd}, it follows that:
$$\text{ dim} \ (\hat{W}_{n,m} )=d_{n,m}={n+m-2 \choose m}.$$
\end{proposition}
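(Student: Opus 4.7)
The plan is to prove that $\phi$ is an $\Li_s$-module isomorphism whose inverse is obtained by reading off the $v_0$-component of the first tensor slot. This splits naturally into three parts: show $\phi(\hat V'_{n,m})\subseteq \hat W_{n,m}$, establish surjectivity via the highest weight recursion, and observe that injectivity is essentially immediate.

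First I would check that $\phi$ lands in $\hat W_{n,m}$. Identifying $\hat V'_{n,m}$ with $\hat V_{n-1,m}$ in the obvious way, each $w\in \hat V'_{n,m}$ has $K$-eigenvalue $s^{n-1}q^{-2m}$; since $E$ shifts the weight by $q^{+2}$ and $v_k$ has weight $sq^{-2k}$, every summand $v_k\otimes E^k(w)$ sits in the weight space with $K$-eigenvalue $s^n q^{-2m}$, as required. To see $E(\phi(w))=0$, I would apply $\Delta(E)=E\otimes K+1\otimes E$ to each summand, producing two contributions $v_{k-1}\otimes s^{n-1}q^{-2(m-k)}E^k(w)$ and $v_k\otimes E^{k+1}(w)$. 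Reindexing the first sum by $j=k-1$ and collecting exponents shows that the scalar in front of $v_j\otimes E^{j+1}(w)$ coming from the first sum is exactly the negative of the corresponding scalar from the second sum; the cancellation boils down to the identity $2m(j+1)-(j+1)(j+2)-2m+2(j+1)=2mj-j(j+1)$, which is the algebraic reason that the coefficient $(-1)^k s^{-k(n-1)}q^{2mk-k(k+1)}$ is chosen exactly as it is.

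For surjectivity I would run this argument in reverse. Any $v\in \hat W_{n,m}$ decomposes along the first tensor factor as $v=\sum_{k=0}^m v_k\otimes w_k$ with $w_k\in \hat V_{n-1,m-k}$, forced by the $K$-weight decomposition. The equation $E(v)=0$ then yields, after extracting the coefficient of $v_j$, the recursion $Ew_j = -s^{n-1}q^{-2(m-j-1)}w_{j+1}$. Iterating gives $w_k = (-1)^k s^{-k(n-1)} q^{2\sum_{j=0}^{k-1}(m-j-1)} E^k(w_0)$, and the telescoping identity $2\sum_{j=0}^{k-1}(m-j-1)=2mk-k(k+1)$ reproduces exactly the defining coefficient of $\phi$. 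Thus $v=\phi(w_0)$, proving surjectivity. Injectivity is immediate: the $k=0$ summand of $\phi(w)$ is $v_0\otimes w$ with coefficient $1$, while every other summand has first factor $v_k$ with $k\ge 1$, so projecting onto the $v_0$-component in the first tensor slot recovers $w$. Combining the three parts, $\phi$ is an $\Li_s$-module isomorphism, and therefore sends $\mathscr B_{\hat V'_{n,m}}=\{v^s_{\iota(e)}\mid e\in E_{n,m}\}$ to a basis of $\hat W_{n,m}$; the dimension equals $|E_{n,m}|={n+m-2 \choose m}$ by Remark \ref{R:dd}.

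The main obstacle is the cancellation in the $E(\phi(w))=0$ computation: the two sums produced by the coproduct only telescope because of the very specific scalar $(-1)^k s^{-k(n-1)}q^{2mk-k(k+1)}$, and matching the $q$-exponents after the index shift $k\mapsto k-1$ requires careful bookkeeping of the quadratic term $k(k+1)$. Once this verification is in hand, the surjectivity argument is just the same recursion read in reverse, and injectivity is forced by inspection of the $v_0$-component.
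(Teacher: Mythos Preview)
The paper does not supply its own proof of this proposition; it is stated with the citation \cite{Ito} and used as an input. Your argument is correct and is essentially the natural one: you verify the weight condition directly, check $E\phi(w)=0$ by the telescoping cancellation forced by the choice of scalar, and then observe that the highest weight condition on an arbitrary $v=\sum_k v_k\otimes w_k\in\hat W_{n,m}$ yields exactly the same recursion $w_{j+1}=-s^{-(n-1)}q^{2(m-j-1)}Ew_j$, so that $v=\phi(w_0)$; injectivity via the $v_0$-component is immediate. The only point where one should be a little careful is the identification of $\hat V'_{n,m}$ with $\hat V_{n-1,m}$ (so that $E^k$ acts via the $(n-1)$-fold coproduct and $v_k\otimes E^k(w)$ lands in $\hat V^{\otimes n}$); you handle this correctly, and the termination $E^{m+1}(w)=0$ follows from the fact that no weight in $\hat V^{\otimes(n-1)}$ exceeds $s^{n-1}$.
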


\subsection{Quantum representations of the braid groups}
In the following part, we will see that the braid group action on tensor powers of the (generic) Verma module and the finite dimensional module, passes at the level of highest weight spaces.
 \begin{proposition} \label{P:hw}
Since $\varphi^{\hat{V}}_n$ gives an action on $\hat{V}^{\otimes n}$ which commutes with the quantum group action ( Proposition \ref{P:RT}), it will commute with the actions of the generators $K, E$ of the quantum group. Then, it induces a well defined action on the generic highest weight spaces $$\phs:B_n \rightarrow \Aut( \hat{W}_{n,m}).$$
\nolinebreak 
This action in the basis  $\mathscr B_{\hat{W}_{n,m}}$ leads to a representation:
$$1) \ \phs:B_n \rightarrow GL(d_{n,m}, \Li_s) \ \ \ \ \ \ $$
called the generic quantum representation on highest weight spaces of the Verma module.
\end{proposition}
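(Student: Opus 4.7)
The plan is to verify that the braid action $\varphi^{\hat V}_n$ on $\hat V^{\otimes n}$ restricts to $\hat W_{n,m}$, and then to use Proposition \ref{P:bqu} to convert this restriction into a matrix representation. The statement is essentially a formal consequence of the naturality of $R$-matrices, so the main task is to assemble the pieces carefully, rather than to overcome a genuine technical difficulty.

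First, I would recall why $\varphi^{\hat V}_n(\sigma_i^{\pm 1})$ is a morphism of $U_q(sl(2))$-modules. By Proposition \ref{P:RT}, each generator acts by $\mathrm{Id}^{\otimes (i-1)} \otimes (\mathscr R^{\pm 1}\circ\tau) \otimes \mathrm{Id}^{\otimes(n-i-1)}$. Since $\mathscr R=C\circ R$ is built from the universal $R$-matrix together with the diagonal twist $C$, and since $R$ intertwines $\Delta$ with $\Delta^{\mathrm{op}}$, the operator $\mathscr R\circ\tau$ commutes with the action of $\Delta(U_q(sl(2)))$ on $\hat V\otimes\hat V$. Consequently, for every $\beta\in B_n$ and every $x\in U_q(sl(2))$, the operator $\varphi^{\hat V}_n(\beta)$ commutes with the action of $x$ on $\hat V^{\otimes n}$; in particular it commutes with $K$ and $E$.

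Next I would show that this commutativity forces $\hat W_{n,m}$ to be invariant. By Definition \ref{D:2}, $\hat W_{n,m}=\{v\in \hat V^{\otimes n}\mid Kv = s^n q^{-2m}v,\ Ev=0\}$, which is simultaneously the $s^n q^{-2m}$-eigenspace of $K$ and the kernel of $E$. For any $v\in\hat W_{n,m}$ and any $\beta\in B_n$, the commutation $K\varphi^{\hat V}_n(\beta) = \varphi^{\hat V}_n(\beta)K$ and $E\varphi^{\hat V}_n(\beta) = \varphi^{\hat V}_n(\beta)E$ give
\begin{equation*}
K\bigl(\varphi^{\hat V}_n(\beta)v\bigr)=s^n q^{-2m}\,\varphi^{\hat V}_n(\beta)v,\qquad E\bigl(\varphi^{\hat V}_n(\beta)v\bigr)=0,
\end{equation*}
so $\varphi^{\hat V}_n(\beta)v\in\hat W_{n,m}$. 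Therefore $\varphi^{\hat V}_n$ restricts to an action $\phs:B_n\to\Aut(\hat W_{n,m})$, which is automatically a group homomorphism because $\varphi^{\hat V}_n$ is.

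Finally, to obtain the matrix form, I would invoke Proposition \ref{P:bqu}: the map $\phi:\hat V'_{n,m}\to\hat W_{n,m}$ is an isomorphism of $\Li_s$-modules, and the set $\mathscr B_{\hat W_{n,m}}=\{\phi(v^s_{\iota(e)})\mid e\in E_{n,m}\}$ is a free basis of cardinality $d_{n,m}=\binom{n+m-2}{m}$. Expressing $\phs(\sigma_i^{\pm 1})$ in this basis yields elements of $GL(d_{n,m},\Li_s)$, and the relations of $B_n$ are preserved because they were already preserved by $\varphi^{\hat V}_n$ on the ambient space. There is no genuine obstacle; the only point to watch is that the entries of the matrices lie in $\Li_s=\Z[q^{\pm 1},s^{\pm 1}]$ (and not merely in its field of fractions), but this is ensured by the fact that $\mathscr R$ and $\phi$ are both defined over $\Li_s$ and that $\mathscr B_{\hat W_{n,m}}$ is an $\Li_s$-basis.
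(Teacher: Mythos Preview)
Your proposal is correct and follows the same line as the paper. In fact the paper does not give a separate proof of this proposition: the argument is contained in the statement itself (commutation with $K,E$ from Proposition \ref{P:RT} implies invariance of $\hat W_{n,m}$, and the basis $\mathscr B_{\hat W_{n,m}}$ from Proposition \ref{P:bqu} then yields the matrix form), and you have simply written out these two steps in full detail.
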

\begin{proposition} \label{P:hww}Similarly, using the previous specialisations we get induced braid group actions as follows.
$$2) \ \  \phh:B_n \rightarrow \Aut( \hat{W}^{\q,\lambda}_{n,m}) \ \ \ \ \ \ $$ is a well defined action induced by $\varphi^{\hat{V}_{\q,\lambda}}_n$
$$3) \ a) \ \pv:B_n \rightarrow \Aut( \hat{W}^{N-1}_{n,m})$$ is a well defined action induced by $\varphi^{\hat{V}_{N-1}}_n$ called the quantum representation on highest weight spaces of the Verma module.

$$ 3) \ b) \ \ph:B_n \rightarrow \Aut(W^{N}_{n,m}) \ \ \ \ $$ is a well defined action induced by $\varphi^{V_N}_n$ called the quantum representation on highest weight spaces of the finite dimensional module.
\end{proposition}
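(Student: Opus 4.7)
My plan is to reduce all three statements to the generic result (Proposition \ref{P:hw}) by checking that the relevant specialisation homomorphisms preserve the structure used there, and, in case 3b), that the finite-dimensional submodule $V_N^{\otimes n}$ is preserved by the $R$-matrix action. Once one knows the underlying linear map is well-defined on the target subspace, the braid relations are automatic: they already hold on the ambient space $\hat V^{\otimes n}$ by Proposition \ref{P:RT}, and the restriction of a braid group representation to an invariant subspace is again a braid group representation.

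For cases 2) and 3a), the key observation is that the generic operator $\varphi^{\hat V}_n(\sigma_i)$ acts on each basis element $v_{e_1}\otimes\cdots\otimes v_{e_n}$ by a \emph{finite} sum of basis elements with coefficients in $\Li_s$: the infinite series defining $R$ truncates because $E^k v_{e_i} = v_{e_i-k}$ vanishes for $k > e_i$, and this is reflected in the closed formula recorded in Remark \ref{R:R}. Applying the ring homomorphisms $\eta_{\q,\lambda}$ (respectively $\eta_{\lambda}$) entrywise therefore produces a well-defined action on $\hat V^{\otimes n}_{\q,\lambda}$ (respectively $\hat V^{\otimes n}_{N-1}$), which by naturality coincides with $\varphi^{\hat V_{\q,\lambda}}_n$ (respectively $\varphi^{\hat V_{N-1}}_n$). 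Since the specialised $K$ and $E$ still commute with this action, and the defining conditions of $\hat W^{\q,\lambda}_{n,m}$ and $\hat W^{N-1}_{n,m}$ are kernels of polynomial expressions in $K$ and $E$ whose coefficients specialise compatibly, these highest weight subspaces are invariant.

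For case 3b), the extra ingredient is Lemma \ref{inclusion}, which exhibits $V_N$ as a $U_q(sl(2))$-submodule of $\hat V_{N-1}$ after specialisation. What must be checked is that the specialised $R$-matrix preserves $V_N \otimes V_N$, because then $\varphi^{\hat V_{N-1}}_n(\sigma_i)$ preserves $V_N^{\otimes n}$ and hence restricts to an endomorphism of the intersection $W^N_{n,m} = \hat W^{N-1}_{n,m} \cap V_N^{\otimes n}$, whose invariance follows by combining this with case 3a). Using the explicit formula in Remark \ref{R:R} and the substitution $s = q^{N-1}$, the factor $sq^{-k-j} - s^{-1}q^{k+j}$ equals $\{N-1-k-j\}$ and vanishes precisely when $k = N-1-j$. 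Thus the coefficient of $v_{j+n}\otimes v_{i-n}$ in $\mathscr R(v_i\otimes v_j)$ is zero whenever the product range $\{0,\dots,n-1\}$ contains $k = N-1-j$, i.e.\ whenever $j+n \geq N$. For $v_i, v_j \in V_N$ (so $i,j \leq N-1$), only summands with $j+n \leq N-1$ and $i-n \geq 0$ survive, and these lie in $V_N\otimes V_N$, as required. This vanishing calculation is the only non-formal step and is the technical heart of the proof; everything else is functorial bookkeeping, including the observation that invertibility of $\varphi(\sigma_i)$ on the subspace follows from the same argument applied to $\sigma_i^{-1}$.
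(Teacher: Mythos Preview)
Your proposal is essentially correct, and for case 3b) it coincides with the argument the paper gives later (the lemma in Step~IV, Section~\ref{StepIV}): the vanishing of the factor $sq^{-k-j}-s^{-1}q^{k+j}$ at $k=N-1-j$ under $s=q^{N-1}$ is exactly the computation used there to show $\mathscr R$ preserves $V_N\otimes V_N$. The paper itself states Proposition~\ref{P:hww} without proof, deferring this key calculation.

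One small discrepancy worth noting: you describe $\hat W^{\q,\lambda}_{n,m}$ and $\hat W^{N-1}_{n,m}$ as ``kernels of polynomial expressions in $K$ and $E$'' inside the specialised module, but the paper \emph{defines} them as base changes $\hat W_{n,m}|_{\eta_{\q,\lambda}}$ and $\hat W_{n,m}|_{\eta_{N-1}}$ of the generic highest weight space (Definition~\ref{D:2}), which is a free $\Li_s$-module with basis $\mathscr B_{\hat W_{n,m}}$. With that definition, cases 2) and 3a) follow immediately from Proposition~\ref{P:hw} by functoriality of tensor product along a ring map --- no appeal to commutation with the specialised $E,K$ is needed. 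Your argument via invariance of kernels is not wrong, but it proves invariance of a possibly different subspace (specialisation of a kernel need not equal the kernel after specialisation); since the paper's definition sidesteps this issue, the simpler base-change argument is both shorter and better aligned with what is actually being claimed.
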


As a summary, we have the following highest weights spaces, which carry braid group actions and live inside the $n^{th}$ tensor power of different specialisations of the Verma module $\hat{V}$:
\begin{center}

\label{tabel}
\begin{tabu} to 1\textwidth { | X[l] | X[c] | X[c] | X[r] | } 
 \hline
 Braid group action & Highest weight space & Representation &  Specialisation \\ 
\hline                                    
$ \ \ \ \ \ \ \ \phs$  &  $ \ \ \ \ \ \hat{W}_{n,m} $ & $ \hat{V}^{\otimes n}$ & $ \ \ 1) \ q,s\text{ param} \ \ \ $ \\
\hline                                    
 $ \ \ \ \ \ \phh$ &  $\ \ \ \ \ \hat{W}^{\q,\lambda}_{n,m} $ & $ \hat{V}^{\otimes n}_{\q,\lambda}$ & $2) \q=e^h,\lambda \in \C $ $ \eta_{\q,\lambda} \ \ \ \ \ \ \ \ \ \ $ \\
\hline
 $ \ \ \ \ \ \pv$ & $\ \ \ \ \ \ \hat{W}^{N-1}_{n,m} $ & $  \ \ \ \ \ \ \hat{V}^{\otimes n}_{N-1} \ \ \ \ $& $ \ \ \ \ \ 3)a) \ \ \ q \ \text{param} \ \ \ \ \ \ \ $ $\lambda=N-1 \in \N$ $\eta_{\lambda \ \ \ \ \ \ \ \ \ \ }$ \\
\hline
 $ \ \ \ \ \ \ph$ &  $\ \ \ \ \ \ W^{N}_{n,m} $ & $  \ \ \ \ \ \ V^{\otimes n}_N \ \ \ \ $& $ \ \ \ \ \ 3)b) \ \ \ q \ \text{param} \ \ \ \ \ \ \ $ $\lambda=N-1 \in \N$ $\eta_{\lambda \ \ \ \ \ \ \ \ \ \ }$ \\
\hline

\end{tabu}
\end{center}

\section{Lawrence representation}\label{3}
\subsection{Local system}
In this section we present certain braid group representations introduced by Lawrence (\cite{Law}). They are defined using the middle homology of a certain covering of the configuration space in the punctured disk. They are called homological Lawrence representations and they have a topological description. 

Let $n \in \N$. Consider $\D^2\subseteq \C$ be the unit disk including its boundary and $\{ p_1,...,p_n\}$-$n$ points in its interior, on the real axis.

Let $D_n:=\D^2 \setminus \{p_1,...,p_n \} $ and fix $m\in \N$ a natural number.\\
Consider $C_{n,m}$ to be the unordered configuration space of $m$ points in the $n$-punctured disk:
$$C_{n,m}=Conf_m(D_n)= \Big( \D^{\times m}_n \setminus \{x=(x_1,...,x_m) \mid \ \exists \ i,j \ \text { such that } x_i=x_j\}\Big) / Sym_m$$
(here, by $Sym_m$ we denote the symmetric group of order $m$).\\
Let us fix $d_1,...,d_m \in  \partial D_n$.

\begin{tikzpicture}
\foreach \x/\y in {0/2,2/2,4/2,2/1,2.3/1,3/1.07} {\node at (\x,\y) [circle,fill,inner sep=1pt] {};}
\node at (0.2,2) [anchor=north east] {$p_1$};
\node at (2.2,2) [anchor=north east] {$p_i$};
\node at (4.2,2) [anchor=north east] {$p_n$};
\node at (3,2) [anchor=north east] {$\sigma_i$};
\node at (2.2,1) [anchor=north east] {$d_1$};
\node at (2.5,1.02) [anchor=north east] {$d_2$};
\node at (3.2,1.05) [anchor=north east] {$d_m$};
\node at (2.62,2.3) [anchor=north east] {$\wedge$};
\draw (2,1.8) ellipse (0.4cm and 0.8cm);
\draw (2,2) ellipse (3cm and 1cm);
\foreach \x/\y in {7/2,9/2,11/2,8.9/1,9.5/1,10/1.07} {\node at (\x,\y) [circle,fill,inner sep=1pt] {};}
\node at (7.2,2) [anchor=north east] {$p_1$};
\node at (9.2,2) [anchor=north east] {$p_i$};
\node at (11.2,2) [anchor=north east] {$p_n$};
\node at (9.2,1) [anchor=north east] {$d_1$};
\node at (9.7,1.02) [anchor=north east] {$d_2$};
\node at (10.2,1.05) [anchor=north east] {$d_m$};
\node at (9,1.5) [anchor=north east] {$\delta$};
\draw (9,2) ellipse (3cm and 1cm);
\draw (9.5,1)  arc[radius = 3mm, start angle= 0, end angle= 180];
\draw [->](9.5,1)  arc[radius = 3mm, start angle= 0, end angle= 90];
\draw (8.9,1) to[out=50,in=120] (9.5,1);
\draw [->](8.9,1) to[out=50,in=160] (9.25,1.12);
\end{tikzpicture}

\begin{definition}(Local system on $C_{n,m}$)\\
Let us define the abelianisation map by $$ab: \pi_1(C_{n,m}) \rightarrow H_1(C_{n,m}).$$
\nolinebreak
Then, it is known that for any $m\geq 2$ one has that:
\begin{equation*}
\begin{aligned}
H_1(C_{n,m}) \ \ \simeq \ \ \Z^{n} \ \ \ \oplus \ \ \ \Z \ \ \ \ \ \ \ \ \ \ \ \  \ \ \ \ \ \ \ \\
{ \ \ \ \ \ \ \ \ \ \ \ \ \ \ \ \ \ \ \ \ \ \ \ \  \ \ \ \ } <ab(\Sigma_i)> \ <ab(\Delta)>, \ \  {i\in \overline{1,n}}.
\end{aligned}
\end{equation*}
Here, the $i$-th element of $\Z^{n}$ is generated by the class of a loop in the configuration space of $m$ points, whose first component goes around the puncture $p_i$ and all the others are constant:
$$\Sigma_i(t):=\{ \left(\sigma_i(t), d_2,...,d_m \right) \}, t \in [0,1].$$
The last component is generated by the class of a loop $\Delta$ which swaps two points between them:
$$\Delta(t):=\{ \left(\delta(t), d_3,...,d_m \right) \}, t \in [0,1].$$
Then, consider the following function: 
\vspace{-0.1 cm}
\begin{equation*}
\begin{aligned}
aug: \Z^{n} \ \oplus \ \Z \rightarrow \ \Z \ \ \oplus \ \ \Z \ \ \ \\
 \ \ \ \ \ \ \ \ \ \ \ \ \ \ \ \ \ \ \ \ \ \ \ <x> \ <d> \\
 \ \ \ \ \ \ \ \ \ \ \ \ \ \ aug\left((x_1,...,x_n),y\right)=\left(x_1+...+x_n,y\right).
\end{aligned}
\end{equation*}
\nolinebreak
Consider the local system defined by the composition of the previous maps:
$$\phi: \pi_1(C_{n,m}) \rightarrow \Z \oplus \Z.$$
$$\phi= aug \circ ab.$$ \label{R:ll}
\end{definition}
\begin{definition}(Covering space)\\
Let $\tilde{C}_{n,m}$ be the covering of $C_{n,m}$ corresponding to $Ker(\phi)$ and its associated projection map $\pi: \tilde{C}_{n,m} \rightarrow C_{n,m}$.
\end{definition}
\begin{remark} \label{R:mm}
The deck transformations of the covering are: 
$$Deck(\tilde{C}_{n,m})=\Z\oplus \Z.$$ Each deck transformation induces a cellular chain map for $\tilde{C}_{n,m}$ and this map will pass to the homology groups. So, we have an action: 
$$\Z \oplus \Z \curvearrowright H^{\text{lf}}_m(\tilde{C}_{n,m}, \Z)$$ 
( here $H^{lf}$ means the Borel-Moore homology, the homology of locally finite chains).
Moreover, this action will be defined at the level of the group ring: $$\Z[\Z \oplus \Z]\simeq \Z[x^{\pm}, d^{\pm}].$$  
It follows that the homology groups $H^{\text{lf}}_m(\tilde{C}_{n,m}, \Z)$ and $H_m(\tilde{C}_{n,m}, \Z; \partial)$ have the structure of $\Z[x^{\pm}, d^{\pm}]$-modules. 
\end{remark}
\subsection{Basis of multiforks}
So far, we have seen the covering of the configuration space whose homology will lead to the Lawrence representation. For that, we will define certain subspaces in the Borel-Moore homology and homology relative to the boundary of $\tilde{C}_{n,m}$.

 \begin{center}
\begin{tikzpicture}\label{picture}
\foreach \x/\y in {-1.2/2, 0.4/2 , 1.3/2 , 2.5/2 , 3.6/2 , 5/2 } {\node at (\x,\y) [circle,fill,inner sep=1.3pt] {};}
\foreach \x/\y in {-1.2/2, 0.4/2 , 1.3/2 , 2.5/2 , 3.6/2 , 5/2 } {\node at (\x,\y) [circle,fill,inner sep=1pt] {};}
\foreach \x/\y in {-0.7/2,-0.4/2.4,4.2/2.4,4.6/2} {\node at (\x,\y) [circle,fill,inner sep=2pt]{} ;}
\foreach \x/\y in {1.4/0.7,2/0.7,2.6/0.7} {\node at (\x,\y) [circle,fill,inner sep=1.8pt]{} ;}

\node at (-1,2) [anchor=north east] {$1$};
\node at (0.6,2) [anchor=north east] {$2$};
\node at (1.4,2) [anchor=north east] {i};
\node at (2.9,2) [anchor=north east] {i+1};
\node at (4,2) [anchor=north east] {n-1};
\node at (5.4,2) [anchor=north east] {n};
\node at (-0.6,2.7) [anchor=north east] {$\color{red}e_1$};
\node at (5.6,2.7) [anchor=north east] {$\color{red}e_{n-1}$};
\node at (-1,2.6) [anchor=north east] {\color{green}$f_1$};
\node at (6,2.4) [anchor=north east] {$\color{green}f_{n-1}$};
\node at (0.6,3) [anchor=north east] {\huge{\color{red}$\F_e$}};
\node at (3.5,3) [anchor=north east] {\huge{\color{green}$\DD_f$}};
\node at (0.8,6) [anchor=north east] {\huge{\color{red}$\tilde{\F}_e$}};
\node at (2.6,5.8) [anchor=north east] {\huge{\color{green}$\tilde{\DD}_f$}};
\node at (-2.5,2) [anchor=north east] {\large{$C_{n,m}$}};
\node at (-2.5,6) [anchor=north east] {\large{$\tilde{C}_{n,m}$}};

\draw [very thick,black!50!red,->](-1.2,2) to[out=60,in=110] (0.4,2);
\draw [very thick,black!50!red,-](-1.2,2) to (0.4,2);
\draw [very thick,black!50!red,->](3.6,2) to[out=60,in=110] (5,2);
\draw [very thick,black!50!red,-](3.6,2) to (5,2);
\draw [very thick,black!50!green,-](-0.7,1) to (-0.7,3);
\draw [very thick,black!50!green,-](-0.4,0.9) to (-0.4,3.1);
\draw [very thick,black!50!blue,->](-2.2,4.5) to[out=-120,in=110] (-2.2,3);
 \draw[very thick,black!50!red] (0.5, 5.5) circle (0.8);
 \draw[very thick,black!50!green] (2.2, 5.16) circle (0.9);
\draw [very thick,black!50!green,-](4.6,1) to (4.6,3);
\draw [very thick,black!50!green,-](4.2,0.9) to (4.2,3.1);
\
\draw (2,2) ellipse (4.2cm and 1.3cm);
\draw (2,5.4) ellipse (4cm and 1.2cm);
\node (d1) at (1.6,0.8) [anchor=north east] {$d_1$};
\node (d2) at (2.2,0.8) [anchor=north east] {$d_2$};
\node (dn) at (2.8,0.8) [anchor=north east] {$d_m$};

\node (dn) at (3.3,1.5) [anchor=north east] {\Large \color{blue}$\delta_f$};
\draw [very thick,black!50!blue,->][in=-30,out=-190](1.4,0.7) to (-0.7,1.5);
\draw [very thick,black!50!blue,->][in=-30,out=-190](2,0.7) to (-0.4,1.6);
\draw [very thick,black!50!blue,->][in=220,out=0](2.6,0.7) to (4.6,1.6);

\node (dn) at (1.8,1.5) [anchor=north east] {\Large \color{orange}$\gamma_e$};
\draw [very thick,orange,->][in=-60,out=-190](1.4,0.7) to  (-0.2,2);
\draw [very thick,orange,->][in=-70,out=-200](2,0.7) to (-0.1,2.4);
\draw [very thick,orange,->][in=-90,out=0](2.6,0.7) to (4,2.3);

\end{tikzpicture}
\end{center}

\begin{definition}({\bf Multiforks})\cite{Big3},\cite{Ito}

{\bf{1) Submanifolds}} 

Let $e=(e_1,...,e_{n-1}) \in E_{n,m}$ as in the definition \ref{D:index}. 
Then, for each $e$ we will construct an $m$-dimensional submanifold in $\tilde{C}_{n,m}$, which give a homology class in the Borel Moore homology of the covering. We present this construction below. 

For each $i \in \{ 1,...,n-1 \} $, consider $e_i$-disjoint horizontal segments in $D_n$, between $p_i$ and $p_{i+1}$ (which meet just at their boundary), as in the picture \ref{picture}. Denote those segments by $I^{e}_1,...,I^{e}_{e_1},..., I^{e}_{m}$. Also, for each $k \in \{ 1,...,m \} $, choose a vertical path ${\gamma}^{e}_k$ between the segment $I^{e}_k$ and $d_k$.

Then, each segment is a map $I^{e}_i: (0,1)\rightarrow D_n$. Since these segments are disjoint, their product gives a map as follows:
$$I^{e}_1\times...\times I^{e}_m: (0,1)^m\rightarrow D^{\times m}_n\setminus \{x=(x_1,...,x_m)| x_i=x_j\}$$

Let the projection defined by the quotient with respect to the Sym$_m$-action:
 $$\pi_m : D^{\times m}_n \setminus \{x=(x_1,...,x_m)| x_i=x_j\}) \rightarrow C_{n,m}.$$ 
Composing the two previous maps we obtain an $m$-dimensional disc: 
$$ \ \ \ \F_{e}: \D^m(=(0,1)^m)\rightarrow C_{n,m}$$
$$\F_e=\pi_m \circ (I^{e}_1 \times ... \times I^{e}_{m}).$$

{\bf 2) Base Points} The paths ${\gamma}^{e}_1,...,{\gamma}^{e}_m$ which start on the segments $I^{e}_{1},...,I^{e}_{m}$ and end towards the base points $d_1,..,d_m$ will help us to lift the submanifold $\F_{e}$ towards the covering $\tilde{C}_{n,m}$. 

Let ${\bf d} \in C_{n,m} $ be the point defined by the $m$-uple $(d_1,...,d_m)$.
Then, let us consider a lift of this point $\tilde{ \bf d} \in \pi^{-1} ({ \bf d}) $. The product of the paths $\gamma^e_k$, will define a path in the configuration space.
Let $$\gamma^e := \pi_m \circ (\gamma^{e}_1, ..., \gamma^{e}_m ) : [0,1] \rightarrow C_{n,m}.$$
Consider $\tilde{\gamma}^e$ to be the unique lift of the path $\gamma^e$ such that $$\tilde{\gamma}^e:  [0,1]^m \rightarrow \tilde{C}_{n,m}$$
$$\tilde{\gamma}^e(0)=\tilde{\bf d}.$$
 
{\bf 3) Multiforks}
Let $\tilde{\F}_{e}$ be the unique lift of the submanifold $\F_e$ to the covering, which passes through the point $\tilde{\gamma}^e (1)$:  
$$\tilde{\F}_{e}: \D^m(=(0,1)^m)\rightarrow \tilde{C}_{n,m}$$ 
$$\tilde{\gamma}^e (1) \in \tilde{\F}_{e}.$$
Then $\tilde{\F}_{e}$ gives a well defined a class in the Borel-Moore Homology $$[\tilde{\F}_{e}] \in H^{\text{lf}}_m(\tilde{C}_{n,m}, \Z)$$
 called the multifork corresponding to the element $e \in E_{n,m}$.
\end{definition}

The Lawrence representation will be a certain subspace of this Borel-Moore homology of the covering, spanned by these multiforks. More precisely we have the following definition:
\begin{definition}\label{multif}
Consider the subspace: 
\begin{equation}
\HH_{n,m}:= <[\tilde{\F}_e] \ | \  e\in E_{n,m}>_{\Z[x^{\pm},d^{\pm}]} \subseteq H^{\text{lf}}_m(\tilde{C}_{n,m}, \Z) \ \ \ \ \ \ \ 
\end{equation}
Denote by $\mathscr B_{\HH_{n,m}}:= \{  [\tilde{\F}_e] \ | \  e\in E_{n,m} \}$ the set of all multiforks.
\end{definition}
\begin{proposition}
From (\cite{Ito},Prop 3.1) $\HH_{n,m}$ is a free module over $\Z[x^{\pm}, d^{\pm}]$ of dimension $d_{n,m}$ and $\mathscr B_{\HH_{n,m}}$ describes a basis for it, called multifork basis. 
\end{proposition}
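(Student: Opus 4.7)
Since $\HH_{n,m}$ is by definition the $\Z[x^{\pm},d^{\pm}]$-span of the family $\mathscr B_{\HH_{n,m}}=\{[\tilde{\F}_e]\mid e\in E_{n,m}\}$, and since $|E_{n,m}|=d_{n,m}$ by Remark \ref{R:dd}, it suffices to prove that the multiforks are linearly independent over $\Z[x^{\pm},d^{\pm}]$. The plan is to establish this by exhibiting a dual family against which the multiforks pair in a controlled, invertible way.

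The natural candidates are the barcodes $\{[\tilde{\DD}_f]\mid f\in E_{n,m}\}$ that will be introduced formally in definition \ref{barc}: for each $f\in E_{n,m}$, the submanifold $\DD_f$ in $C_{n,m}$ consists of $f_i$ vertical segments attached just above and just below the puncture $p_i$, lifted to $\tilde{C}_{n,m}$ through a reference path starting at $\tilde{\bf d}$. Together with the topological intersection form $\langle\cdot,\cdot\rangle$ from \ref{333}, which can be computed locally from the picture of $\F_e\cap \DD_f$ weighted by the local system $\phi$, these barcodes provide the tool needed to detect linear independence.

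The central step is therefore to compute the matrix
\[
M_{e,f}:=\langle[\tilde{\F}_e],[\tilde{\DD}_f]\rangle\in \Z[x^{\pm},d^{\pm}],\qquad e,f\in E_{n,m},
\]
and to show that, with respect to a suitable total order on $E_{n,m}$ (for instance the lexicographic order), $M$ is triangular with diagonal entries that are monomials in $x$ and $d$. Geometrically, one first observes that $\F_e$ and $\DD_f$ already have transverse intersection in the base $C_{n,m}$, and that a transverse geometric intersection point corresponds to a choice, for each puncture $p_i$, of which of the $e_i$ horizontal segments of $\F_e$ meets which of the $f_i$ vertical segments of $\DD_f$. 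When $e=f$, there is a unique such matching in the base; when $e$ precedes $f$ in lexicographic order, a combinatorial argument shows that no matching exists. Lifting to $\tilde{C}_{n,m}$, each contributing intersection comes with a weight $x^{a}d^{b}$ recording the abelianised homotopy class of the concatenation $\gamma^e\cdot(\text{intersection trajectory})\cdot(\delta_f)^{-1}$ under $\phi$.

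The main obstacle is precisely this last calculation: one must verify that the diagonal weight $M_{e,e}$ is a single monomial $\pm x^{a(e)}d^{b(e)}$ (hence a unit in $\Z[x^{\pm},d^{\pm}]$) and that the triangularity claim holds in the chosen order. Once this is established, $\det M$ is a unit in $\Z[x^{\pm},d^{\pm}]$, so any relation $\sum_e P_e(x,d)\,[\tilde{\F}_e]=0$ paired against the $[\tilde{\DD}_f]$ gives $M\cdot(P_e)^{T}=0$ and forces every $P_e=0$. This establishes linear independence, so $\mathscr B_{\HH_{n,m}}$ is a basis and $\HH_{n,m}$ is free of rank $d_{n,m}$.
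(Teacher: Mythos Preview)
Your overall strategy---pair the multiforks against the barcodes and use the resulting matrix to detect linear independence---is exactly the approach the paper uses (it cites \cite{Ito} for the proposition, but then re-derives the independence in Remark~\ref{L:mult} via the Blanchfield pairing). However, two of your key claims about this matrix are incorrect, and one of them breaks your argument as written.

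First, the matrix is not merely triangular, it is \emph{diagonal}: if $e\neq f$ then $\F_e\cap\DD_f=\varnothing$ already in the base $C_{n,m}$. Indeed, a point in this intersection is an unordered $m$-tuple lying simultaneously on each horizontal segment of $\F_e$ and each vertical segment of $\DD_f$; since horizontals between $p_i,p_{i+1}$ only meet verticals between the same pair of punctures, this forces a block-by-block bijection and hence $e_i=f_i$ for every $i$. So no ordering or triangularity argument is needed.

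Second, and this is the real gap: your claim that when $e=f$ there is a \emph{unique} matching, so that $M_{e,e}$ is a single monomial (hence a unit), is false. Between $p_i$ and $p_{i+1}$ the $e_i$ horizontal segments meet the $e_i$ vertical segments in an $e_i\times e_i$ grid, and the intersection points of $\F_e\cap\DD_e$ in $C_{n,m}$ are parametrised by tuples of permutations $(\sigma_i)\in\prod_i S_{e_i}$. The paper computes that the resulting diagonal entry $p_e=\langle[\tilde\F_e],[\tilde\DD_e]\rangle$ lies in $\N[d^{\pm}]$ and has non-zero constant term (coming from $\sigma=\mathrm{Id}$), but it is \emph{not} a unit in $\Z[x^{\pm},d^{\pm}]$ when some $e_i\geq 2$. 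Consequently your conclusion ``$\det M$ is a unit'' fails.

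The fix is straightforward: $\Z[x^{\pm},d^{\pm}]$ is an integral domain, so each $p_e\neq 0$ is a non-zero-divisor. If $\sum_e P_e\,[\tilde\F_e]=0$, pairing with $[\tilde\DD_f]$ gives $P_f\cdot p_f=0$ and hence $P_f=0$. This is precisely the argument in Lemma~\ref{L:bar} and Remark~\ref{L:mult}.
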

As we have seen, the cardinal of $E_{n,m}$ is known (proposition\ref{R:dd}), so we have: $$\text{rank}(\HH_{n,m})= d_{n,m}={{n+m-2} \choose {m}}.$$

\subsection{Braid group action}
In this part we discuss the relation between this subspace, which lives in the homology of the covering of the configuration space and the braid group action on the punctured disc. 
 It is known (\cite{kasstur},\text{ chap. I.6}) that:
$$B_n= MCG(D_n)=Homeo^+(D_n, \partial)/ \text{isotopy}. \ \ \ $$
Then $B_n \curvearrowright C_{n,m}$ and it induces an action $$B_n \curvearrowright \pi_1(C_{n,m}).$$

\begin{remark}
1) Using the properties of the specific local system that we work with, it follows that the braid group action onto the configuration space induces a well defined action of on the covering $\tilde{C}_{n,m}$.

2)We are interested to study the homology of this covering(\ref{R:mm}). One can check that the action $B_n\curvearrowright \tilde{C}_{n,m}$ commutes with the action of the Deck transformations, generated by $\{x,d\}$.

Moreover, it can be shown that $\phi$ is the finest abelian local system such that the induced action of the braid group on the corresponding covering space commutes with the action given by deck transformations. 
\end{remark}
\begin{corollary} From the previous remarks, one can conclude that there is a well defined action as follows:
$$B_n \curvearrowright H^{\text{lf}}_m(\tilde{C}_{n,m}, \Z) \ (\text{ as a }\Z[x^{\pm}, d^{\pm}]-\text{module}).$$
 \end{corollary}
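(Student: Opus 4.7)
The plan is to unpack the two remarks preceding the corollary into a clean chain of implications, starting from the mapping class group description $B_n=MCG(D_n)$ and ending with a well-defined $\Z[x^{\pm},d^{\pm}]$-linear action on $H^{\mathrm{lf}}_m(\tilde C_{n,m},\Z)$.

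First I would verify that the action $B_n\curvearrowright C_{n,m}$ lifts to $\tilde C_{n,m}$. The standard criterion (covering space theory) says this happens precisely when for every $\beta\in B_n$ the induced map $\beta_*:\pi_1(C_{n,m})\to\pi_1(C_{n,m})$ satisfies $\beta_*(\ker\phi)\subseteq\ker\phi$. Because $\phi=\mathrm{aug}\circ\mathrm{ab}$ factors through abelianisation, it is enough to check invariance of $\phi$ on the explicit generators of $H_1(C_{n,m})=\langle \mathrm{ab}(\Sigma_1),\dots,\mathrm{ab}(\Sigma_n),\mathrm{ab}(\Delta)\rangle$ listed in Definition of the local system. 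A half-twist generator $\sigma_i\in B_n$ sends $\Sigma_j$ to a loop in the class of $\Sigma_{s_i(j)}$ (with $s_i$ the transposition) and preserves the swap loop $\Delta$ up to a combination of $\Sigma_i,\Sigma_{i+1}$'s; since the augmentation $\mathrm{aug}$ collapses all $\Sigma_i$ to a single variable $x$ and keeps $\Delta$ as $d$, both generators are sent to loops with the same $\phi$-image. Hence $\phi\circ\beta_*=\phi$ for every generator, so $\beta_*\ker\phi=\ker\phi$.

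Next, once the lift exists, one makes it canonical by fixing a lift of a basepoint ${\bf d}\in C_{n,m}$, say $\tilde{\bf d}\in\pi^{-1}({\bf d})$, and letting $\beta$ act on $\tilde C_{n,m}$ by the unique homeomorphism lifting $\beta$ and sending $\tilde{\bf d}$ to a chosen lift of $\beta({\bf d})$ (note that the $B_n$-action is on the mapping class group level, so one can isotope representatives to fix the basepoint setup as needed). I would then check that this lifted action commutes with the deck group $\langle x\rangle\oplus\langle d\rangle\cong \Z\oplus\Z$: for any deck transformation $\tau$ and any point $\tilde c\in\tilde C_{n,m}$, both $\beta\tau(\tilde c)$ and $\tau\beta(\tilde c)$ project to the same point of $C_{n,m}$, hence differ by a deck transformation; continuity in $\tilde c$ forces this deck element to be constant, and evaluation near $\tilde{\bf d}$ (where $\beta$ can be arranged to act trivially on a small neighbourhood since $\beta\in MCG(D_n,\partial)$) identifies it with the identity.

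Finally, functoriality of Borel–Moore homology gives an induced $B_n$-action on $H^{\mathrm{lf}}_m(\tilde C_{n,m},\Z)$, and the commutation with deck transformations proved in the previous step upgrades this to an action by $\Z[x^{\pm},d^{\pm}]$-module automorphisms, which is the statement of the corollary. The main technical obstacle is the middle paragraph: one has to be careful that the mapping class group acts only up to isotopy, so the lift is only canonical after fixing basepoint data on both the base and the cover; the verification that different admissible choices yield the same homology-level action uses that any isotopy of $\beta$ relative to $\partial D_n$ lifts to an isotopy of the lifted homeomorphism, and isotopies act trivially on homology.
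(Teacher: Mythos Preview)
Your proposal is correct and follows exactly the route sketched in the paper: the corollary there is stated without proof, simply as a consequence of the two preceding remarks (that the $B_n$-action lifts to $\tilde C_{n,m}$ because $\beta_*(\ker\phi)\subseteq\ker\phi$, and that the lift commutes with the deck group), and you have filled in precisely those details. Your extra care about basepoints and isotopy classes is a genuine strengthening of the paper's exposition, which leaves these points implicit.
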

 
 \begin{definition} (\cite{Ito} Prop 3.1) (Lawrence representation)
 
The subspace $\HH_{n,m}\subseteq  H^{\text{lf}}_m(\tilde{C}_{n,m}, \Z)$ is invariant under the action of $B_n$.

The braid group action onto the homology $\HH_{n,m}$ written in the multifork basis $\mathscr B_{\HH_{n,m}}$, leads to a representation which is called the Lawrence representation:
$$l_{n,m}: B_n\rightarrow GL(d_{n,m}, \Z[x^{\pm}, d^{\pm}]) \  \ (=End(\HH_{n,m}, \Z[x^{\pm}, d^{\pm}])).$$
  \end{definition}

  \section{Blanchfield pairing}\label{4}
In this section, we will present a non-degenerate duality between the Lawrence representation $\HH_{n,m}$ and a "dual" space, which we will denote by $\HH^{\partial}_{n,m}$. This dual space lives in the homology of the covering relative to its boundary. Using this form, we will be able to express any element in the dual of $\HH_{n,m}$, as certain geometric pairing, using elements from the dual space. This property will play an important role in the homological model from Section \ref{s}.
\subsection{Dual space}
We start by defining a certain subset in the homology of the covering of the configuration space relative to its boundary $H_m(\tilde{C}_{n,m}, \Z; \partial)$, by specifying a generating set. We will think that as a dual set to the multifork basis.
\begin{definition}({\bf Barcodes})\cite{Big3}
 
{\bf{1) Submanifolds}} 
Let $e=(e_1,...,e_{n-1}) \in E_{n,m}$. 
For each such $e$, we will define an $m$-dimensional submanifold in $\tilde{C}_{n,m}$, which will give a homology class in $H_{m}(\tilde{C}_{n,m}, \Z; \partial)$. 

For each $i \in \{ 1,...,n-1 \} $, consider $e_i$-disjoint vertical segments in $D_n$, between $p_i$ and $p_{i+1}$ as in the picture \ref{picture}. Denote those segments by $J^{e}_1,...,J^{e}_{e_1},..., J^{e}_{m}$. Also, for each $k \in \{ 1,...,m \} $, we choose a vertical path ${\delta}^{e}_k$ between the segment $J^{e}_k$ and the base point $d_k$.

Each of these segments gives a map $J^{e}_i: [0,1]\rightarrow D_n$. Then their product leads to a map towards the configuration space as follows. Consider the map:
$$J^{e}_1\times...\times J^{e}_m: [0,1]^m\rightarrow D^m_n\setminus \{x=(x_1,...,x_m)| x_i=x_j\}.$$
Projecting onto the configuration space  using $\pi_n$, we obtain a submanifold:
 $$ \DD_{e}: \bar{\D}^m(=[0,1]^m), \partial \bar{\D}^m )\rightarrow (C_{n,m},\partial C_{n,m} ).$$

{\bf 2) Base Points}: As in the case of multiforks, the paths to the base point ${\bf d} \in C_{n,m} $ help us to lift the submanifold $\DD_{e}$ towards the covering $\tilde{C}_{n,m}$. 
Consider the path in the configuration space:
$$\delta^e := \pi_m \circ (\delta^{e}_1, ..., \gamma^{e}_m ) : [0,1] \rightarrow C_{n,m}.$$
Define $\tilde{\delta}^e$ to be the unique lift of the path $\delta^e$ such that: 
$$\tilde{\delta}^e:  [0,1] \rightarrow \tilde{C}_{n,m}$$ 
$$\tilde{\delta}^e(0)=\tilde{\bf d}.$$

{\bf 3) Barcodes} Consider $\tilde{\DD}_{e}$ to be the unique lift of $\DD_e$ to the covering which passes through $\tilde{\delta}^e (1)$:
$$\tilde{\DD}_{e}: \D^m \rightarrow \tilde{C}_{n,m}$$
$$\tilde{\delta}^e (1) \in \tilde{\DD}_{e}.$$ 
 
Then $\tilde{\DD}_{e}$ defines a class in the homology relative to the boundary $$[\tilde{\DD}_{e}] \in H_m(\tilde{C}_{n,m}, \Z; \partial)$$
called the barcode corresponding to the element $e \in E_{n,m}$.
\end{definition}

\begin{definition} (The "dual" representation)\\ \label{barc}
Let the subspace generated by all the barcodes:
$$\HH^{\partial}_{n,m}:= <[\tilde{\DD}_e] \ | \  e\in E_{n,m}>_{\Z[x^{\pm},d^{\pm}]} \subseteq H_m(\tilde{C}_{n,m}, \Z; \partial). \ \ \ \ \ \ \  \ref{R:mm}$$
We call $\HH^{\partial}_{n,m}$ as the "dual" representation of $\HH_{n,m}$. Also, let us consider the set given by all barcodes:
 $$\mathscr B_{\HH^{\partial}_{n,m}}:= \{ [\tilde{\DD}_e] | e \in E_{n,m}   \}.$$
\end{definition}
\begin{remark}
We do not know yet that $\mathscr B_{\HH^{\partial}_{n,m}}$ is a basis for $\HH^{\partial}_{n,m}$, but we will prove this in the next section, using a pairing between $\HH_{n,m}$ and $\HH^{\partial}_{n,m}$.

\end{remark}

\subsection{Graded Intersection Pairing}

In this part, we will describe how the Borel-Moore homology and the homology relative to the boundary of $\tilde{C}_{n,m}$ are related by an intersection form. More precisely we are interested to define a Blanchfield type pairing between $\HH_{n,m}$ and $\HH^{\partial}_{n,m}$. 

We will present a Poincar\'e-Lefschetz type duality, which uses the middle dimensional homologies of the covering with respect to differents parts of the its boundary. We use the space $\tilde{C}_{n,m}$ and think about its boundary as having two parts. The first part, the "boundary at infinity", contains the multi-points in $\tilde{C}_{n,m}$ such that either one of their components projects on $D_n$ "close to a puncture" or where two components get very close one to another after the projection. The second part, is the actual boundary and contains the multi-points for which there exists a component which projects onto the boundary of $D_n$.

The Borel-Moore homology of $\tilde{C}_{n,m}$, can be thought as the homology with respect to the first boundary from above, relative to infinity. The second homology that we will use will be the homology with respect to the boundary of $\tilde{C}_{n,m}$, as described above.

We will follow \cite{Big2}, \cite{Big3}, especially the way of computing the pairing in the case where the homology classes are given by geometric submanifolds.
Let us take two homology classes $[\tilde{M}] \in H^{lf}_m(\tilde{C}_{n,m}, \Z) $ and $[\tilde{N}] \in H_m(\tilde{C}_{n,m},\Z; \partial )$ which can be represented as the classes given by lifts of two m-dimensional submanifolds in the base space $M, N \subseteq C_{n,m}$. The idea is to fix the lift of the second submanifold $\tilde{N}$ in the covering and act with all deck transformations onto the first submanifold $\tilde{M}$. Each time, we will count the geometric intersection between the two submanifolds multiplied with the coefficient given by the element from the deck group. Recall that the local system is given by
\begin{equation*}
\begin{cases}
\phi: \pi_1(C_{n,m})\rightarrow \Z \oplus \Z\\
 Deck(\tilde{C}_{n,m})=\Z \oplus \Z.
 \end{cases}
 \end{equation*}
\begin{definition}\cite{Big}(Graded intersection)\\
Let $F \in H^{lf}_m(\tilde{C}_{n,m}, \Z) $ and $G \in H_m(\tilde{C}_{n,m},\Z; \partial )$. Suppose that there exist $M,N \subseteq C_{n,m}$ transverse submanifolds of dimension $m$ which intersect in a finite number of points such that there exist lifts in the covering $\tilde{M}, \tilde {N}$ with 
$$F=[\tilde{M}] \ \ \ \text{and} \ \ \  G=[\tilde{N}].$$ Then the graded intersection between the submanifolds $\tilde M$ and $\tilde{N}$ is defined by the formula:
$$<<\tilde M,\tilde N>>:= \sum_{(u,v) \in \Z \oplus \Z} (x^ud^v \curvearrowright \tilde{M} \cap \tilde{N}) \cdot x^ud^v \in \Z[x^{\pm}, d^{\pm}]$$ where $( \cdot \cap \cdot)$ means the geometric intersection number between submanifolds.
\end{definition}
\begin{remark}
For any $\varphi \in Deck(\tilde{C}_{n,m})$:
$$\varphi \tilde{M} \cap \tilde{N} \subseteq \pi^{-1} (M \cap N)$$ 
This, together with the lifting property ensures that the previous sum has a finite number of non-zero terms and the graded geometric intersection between $\tilde{M}$ and $\tilde{N}$ is well defined.
\end{remark}
In the sequel, we will see that even if a priori the graded intersection between $\tilde{M}$ and $\tilde{N}$ is defined in the covering space $\tilde{C}_{n,m}$, it can be actually computed in the base, using $M$ and $N$ and the local system for coefficients. More specifically, the intersection pairing is described as a sum parametrised by all intersection points of $M$ and $N$ in $C_{n,m}$, where for each point counts together with a coefficient which is prescribed by the local system. 
\begin{proposition}\label{P:phi}
Let $x \in M \cap N$. Then there exists an unique $\varphi_x \in Deck(\tilde{C}_{n,m})$ such that 
\begin{equation}\label{E:1} 
(\varphi_x\tilde{M} \cap \tilde{N})\cap \pi^{-1}(x) \neq \varnothing.  
\end{equation}
\end{proposition}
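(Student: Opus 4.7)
The plan is to reduce the question to two standard facts about the regular covering $\pi\colon\tilde{C}_{n,m}\to C_{n,m}$ associated with the surjective local system $\phi$: first, that the deck group $\Z\oplus\Z$ acts freely and transitively on each fiber $\pi^{-1}(x)$, and second, that the relevant submanifolds $M=\F_e$ and $N=\DD_f$ are simply connected (being images of cubes $[0,1]^m$), so that a fixed lift $\tilde{M}$ meets every fiber over a point of $M$ in at most one point (and similarly for $\tilde{N}$). Concretely, because $\pi$ restricted to $\pi^{-1}(M)$ is a trivial cover of the simply connected $M$, the preimage $\pi^{-1}(M)$ splits as a disjoint union of sheets, each homeomorphic to $M$ via $\pi$; the chosen lift $\tilde{M}$ is one such sheet, and the same for $\tilde{N}$.

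With this in place, I would fix $x\in M\cap N$ and introduce the two singleton sets
$$\{\tilde{x}_M\}:=\tilde{M}\cap\pi^{-1}(x),\qquad \{\tilde{x}_N\}:=\tilde{N}\cap\pi^{-1}(x).$$
Both points lie in the same fiber $\pi^{-1}(x)$. Since the action of $\operatorname{Deck}(\tilde{C}_{n,m})=\Z\oplus\Z$ on this fiber is free and transitive, there is a unique deck transformation $\varphi_x$ with $\varphi_x(\tilde{x}_M)=\tilde{x}_N$. By construction $\tilde{x}_N$ belongs to $(\varphi_x\tilde{M})\cap\tilde{N}\cap\pi^{-1}(x)$, which gives existence.

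For uniqueness, suppose $\varphi\in\operatorname{Deck}(\tilde{C}_{n,m})$ also satisfies $(\varphi\tilde{M})\cap\tilde{N}\cap\pi^{-1}(x)\neq\varnothing$ and pick a point $\tilde{y}$ in this intersection. The inclusion $\tilde{y}\in\tilde{N}\cap\pi^{-1}(x)=\{\tilde{x}_N\}$ forces $\tilde{y}=\tilde{x}_N$, while $\tilde{y}\in\varphi\tilde{M}$ gives $\varphi^{-1}(\tilde{y})\in\tilde{M}\cap\pi^{-1}(x)=\{\tilde{x}_M\}$, hence $\varphi^{-1}(\tilde{x}_N)=\tilde{x}_M$. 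The free action then pins down $\varphi=\varphi_x$.

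The main (and essentially only) obstacle is to justify that each fixed lift intersects a fiber $\pi^{-1}(x)$ with $x\in M$ (resp.\ $x\in N$) in precisely one point; once this single-valued lifting property is established, the proposition is a direct consequence of covering space theory. For the multiforks and barcodes used in this paper this is automatic from the contractibility of $[0,1]^m$, and the argument extends to any $M,N$ satisfying the hypothesis of the surrounding setup, as formalized in the definition of the graded intersection.
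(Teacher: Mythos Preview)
Your proof is correct and follows essentially the same route as the paper: both arguments isolate the unique points $\tilde{x}_M\in\tilde{M}\cap\pi^{-1}(x)$ and $\tilde{x}_N\in\tilde{N}\cap\pi^{-1}(x)$ and then invoke the free, transitive action of the deck group on the fiber to produce and pin down $\varphi_x$. If anything, you are more explicit than the paper in justifying why each lift meets a fiber in exactly one point (via simple connectivity of $[0,1]^m$), whereas the paper appeals somewhat loosely to ``the fact that $N$ is a submanifold''.
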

\begin{proof}
The fact that $N$ is a submanifold guarantees that $\forall y \in N$:
$$\text{card }|\tilde{N}\cap \pi^{-1}(\{ y \})|=1.$$ 
Using the same property for $M$ as well, let us denote:
\begin{equation*}
\begin{cases}
\tilde{y}_{\tilde{N}}:=\tilde{N}\cap \pi^{-1}(\{ y \})\\
\tilde{y}_{\tilde{M}}:=\tilde{M}\cap \pi^{-1}(\{ y \}).
\end{cases}
\end{equation*}
Then it follows that:
$$\left(\varphi\tilde{M} \cap\tilde{N}\right) \cap \pi^{-1}\left(\{ x \}\right) \neq 0 \text{ iff } \tilde{x}_{\tilde N} \in \varphi \tilde{M}.$$
This remark shows that if $\varphi$ satisfies the condition given in relation \ref{E:1}, then 
$$\varphi\left(\tilde{x}_{\tilde M}\right)=\tilde{x}_{\tilde N}.$$
Using the properties of Deck transformations, this condition gives a characterisation for an unique $\varphi_x$.
\end{proof}
\begin{remark}
The last proposition shows that the intersection points between all the translations of $\tilde{M}$ by the deck transformations and $\tilde{N}$ are actually parametrised by the intersection points between $M$ and $N$:
$$ \bigcup_{\varphi \in Deck(\tilde{C}_{n,m})}(\varphi \tilde{M} \cap \tilde{N}) \longleftrightarrow M \cap N.$$
\end{remark}
{\bf Computation}\label{ComBl} 
For this part, we follow \cite{Big}. Let us fix a basepoint $d \in C_{n,m}$ and $\tilde{d}\in \pi^{-1}(d)$. 
From the last discussion, we notice that in order to compute the pairing $<<\tilde{M}, \tilde{N}>>$, it is enough to consider a sum parametrised by the set $M \cap N$ and see which is the corresponding coefficient for each intersection point. Let $x \in M \cap N$ and $\varphi_x \in Deck(\tilde{C}_{n,m})$ as in \ref{P:phi}. Denote by 
$$\tilde{x}=(\varphi_x\tilde{M} \cap \tilde{N})\cap \pi^{-1}(x).$$
Now we will describe $\varphi_x$ using just the local system $\phi$ and the point $x$.
We notice that we have the same sign of the intersection in the covering and in the base:
$$(\varphi_x \tilde{M} \cap \tilde{N})_{\tilde{x}}= (M \cap N)_x$$ 
Denote this sign by $c_x$. Suppose that we have two paths 
\begin{equation*}
\gamma_M, \delta_N: [0,1]\rightarrow C_{n,m}
\end{equation*}
 such that if we take their unique lifts which start in $\tilde{d}$, denoted by 
$$\tilde{\gamma}_M, \tilde{\delta}_N:[0,1] \rightarrow \tilde{C}_{n,m}$$ we have the following properties: 
\begin{equation*}
\begin{cases}
\gamma_M(0)=d; \ \ \gamma_M(1) \in M; \ \ \tilde{\gamma}_M(1) \in \tilde{M}\\ 
\delta_N(0)=d; \ \ \ \delta_N(1) \in N; \ \ \ \tilde{\delta}_N(1) \in \tilde{N}.
\end{cases}
\end{equation*}
After that, let us consider two paths $\hat{\gamma}_M,\hat{\delta}_N: [0,1]\rightarrow C_{n,m}$ such that 
\begin{equation*}
\begin{cases}
Im(\hat{\gamma}_M)\subseteq M; \ \ \hat{\gamma}_M(0)=\gamma_M(1); \ \ \hat{\gamma}_M(1)=x\\
Im(\hat{\delta}_N)\subseteq N; \ \ \ \ \hat{\delta}_N(0)=\delta_N(1); \ \ \ \hat{\delta}_N(1)=x.
\end{cases}
\end{equation*}
We consider the following loop: $$l_x:=\delta_N \hat{\delta}_N \hat{\gamma}^{-1}_M\gamma_M^{-1}.$$
\begin{proposition}Folllowing \cite{Big}, one has that:
$$\varphi_x= \phi(l_x).$$
\end{proposition}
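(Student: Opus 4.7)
The plan is to lift the loop $l_x$ to the covering starting at $\tilde d$ and read off which translate of $\tilde d$ is reached at the end; by the very definition of the local system $\phi$, that translate equals $\phi(l_x)\cdot\tilde d$, so it suffices to check that the lift ends at $\varphi_x\cdot\tilde d$. The argument is then a direct segment-by-segment unique path lifting, using one auxiliary observation: since $M$ and $N$ are embedded discs (products of intervals in $C_{n,m}$) hence simply connected, their preimages $\pi^{-1}(M)$ and $\pi^{-1}(N)$ split as disjoint unions of sheets, each of which is mapped homeomorphically to the base by $\pi$. In particular, any path entirely contained in $M$ (respectively $N$) lifts to a path entirely contained in whichever sheet its starting point lies in.

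Now I trace through the four pieces of $l_x=\delta_N\,\hat\delta_N\,\hat\gamma_M^{-1}\,\gamma_M^{-1}$ starting from $\tilde d$. First, $\delta_N$ lifts to $\tilde\delta_N$, which by hypothesis ends at $\tilde\delta_N(1)\in\tilde N$. Second, $\hat\delta_N$ lies in $N$ and starts at $\tilde\delta_N(1)\in\tilde N$, so its continued lift stays inside $\tilde N$; since $\pi|_{\tilde N}$ is injective and $\hat\delta_N(1)=x$, the lift ends at the unique point $\tilde x_{\tilde N}=\tilde N\cap\pi^{-1}(x)$. Third, $\hat\gamma_M^{-1}$ is a path in $M$ starting at $\tilde x_{\tilde N}$; the sheet of $\pi^{-1}(M)$ containing $\tilde x_{\tilde N}$ is precisely $\varphi_x\tilde M$ by the defining property of $\varphi_x$, namely $\varphi_x(\tilde x_{\tilde M})=\tilde x_{\tilde N}$. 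The continued lift therefore stays in $\varphi_x\tilde M$ and ends at $\varphi_x(\tilde\gamma_M(1))$. Finally, $\gamma_M^{-1}$ lifts to $\varphi_x\cdot\tilde\gamma_M^{-1}$ (because deck transformations commute with path lifting), so the full lift terminates at $\varphi_x(\tilde d)$. Comparing with the definition of $\phi$ on loops based at $d$ gives $\phi(l_x)=\varphi_x$.

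The only genuinely non-formal input is the assertion that the continuation of a lift along a path in $M$ stays inside a single sheet of $\pi^{-1}(M)$; this is where simple connectivity (triviality of $\phi$ restricted to $M$) is used, and it is the step on which the cleanness of the formula depends. Once this is granted, the rest is mechanical bookkeeping of starting points and endpoints. I expect the main obstacle to be merely notational — carefully distinguishing $\tilde x_{\tilde M}$ from $\tilde x_{\tilde N}$ and keeping track of which sheet of $\pi^{-1}(M)$ is being traversed — rather than anything conceptually deep, and the structure of the proof will closely parallel Bigelow's computation in \cite{Big}.
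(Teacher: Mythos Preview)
Your argument is correct and is exactly the standard path-lifting computation; the paper itself gives no proof here but simply defers to \cite{Big}, so you have supplied what the paper omits. One small refinement: you justify the sheet decomposition of $\pi^{-1}(M)$ and $\pi^{-1}(N)$ by appealing to simple connectivity of $M$ and $N$ (as products of intervals), but in the paper's general setup this is not assumed --- what is assumed is merely that lifts $\tilde M$ and $\tilde N$ exist, and that alone already forces $\pi^{-1}(M)=\bigsqcup_{g\in\mathrm{Deck}} g\tilde M$ (and similarly for $N$), which is all your argument actually uses.
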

\begin{corollary} The pairing between $\tilde M$ and $\tilde{N}$ can be computed using just the submanifolds in the base space $C_{n,m}$ and the local system as follows:
\begin{equation}\label{E:2}
<<\tilde{M},\tilde{N}>>= \sum_{x \in M \cap N} c_x \phi(l_x )\in \Z[x^{\pm}, d^{\pm}].
\end{equation}
\end{corollary}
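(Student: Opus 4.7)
The proof will proceed by a reindexing argument that turns the defining sum over the deck group into a sum over base intersection points. The plan is first to recall that by definition
$$<<\tilde M, \tilde N>> = \sum_{(u,v) \in \Z \oplus \Z}\bigl( x^u d^v \curvearrowright \tilde M \cap \tilde N \bigr)\cdot x^u d^v,$$
so the content of the corollary is simply to identify, for each intersection point $x \in M \cap N$ downstairs, which deck translate of $\tilde M$ meets $\tilde N$ above $x$, and with what local sign.

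Next, I would use the bijection noted after Proposition \ref{P:phi},
$$\bigsqcup_{\varphi \in \mathrm{Deck}(\tilde C_{n,m})}\bigl(\varphi \tilde M \cap \tilde N\bigr)\;\longleftrightarrow\; M\cap N,$$
to regroup the defining sum by its image in $C_{n,m}$. For each $x\in M\cap N$, Proposition \ref{P:phi} singles out a unique deck transformation $\varphi_x$ for which $(\varphi_x\tilde M\cap\tilde N)\cap \pi^{-1}(x)$ is non-empty, and transversality of $M,N$ (together with the fact that $M\cap N$ is finite) guarantees that this preimage is a single transverse point $\tilde x$. Hence the only non-zero contribution among all deck transformations to the neighbourhood of $x$ is the single term $c_x\cdot \varphi_x$, where $c_x$ denotes the sign of intersection.

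A small point to verify is that the sign of intersection upstairs equals the sign downstairs: since $\pi\colon \tilde C_{n,m}\to C_{n,m}$ is a covering, it is a local diffeomorphism near $\tilde x$, and the lifts $\tilde M,\tilde N$ are given the pulled-back orientations from $M,N$, so $(\varphi_x\tilde M\cap \tilde N)_{\tilde x}=(M\cap N)_x=c_x$. Summing over $x\in M\cap N$ yields
$$<<\tilde M,\tilde N>> = \sum_{x\in M\cap N} c_x\,\varphi_x.$$
Finally, applying the preceding proposition $\varphi_x=\phi(l_x)$ substitutes the deck element by the local system evaluated on the explicit loop $l_x=\delta_N\hat\delta_N\hat\gamma_M^{-1}\gamma_M^{-1}$, producing the desired formula in $\Z[x^{\pm},d^{\pm}]$.

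There is no real obstacle here, since both the bijection between lifted intersections and base intersections and the identification $\varphi_x=\phi(l_x)$ have already been established; the corollary is essentially a bookkeeping consequence. The only technical care needed is the orientation compatibility under the covering projection, which is automatic from the definition of the lifts.
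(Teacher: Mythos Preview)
Your proposal is correct and follows exactly the approach the paper takes: the corollary is treated there as an immediate consequence of the preceding discussion, combining the bijection from the remark after Proposition~\ref{P:phi}, the observation that $(\varphi_x\tilde M\cap\tilde N)_{\tilde x}=(M\cap N)_x=c_x$, and the identification $\varphi_x=\phi(l_x)$. The paper does not write out a separate proof, so your bookkeeping argument is precisely what is intended.
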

This pairing $<< , >>$ can be defined in a similar way for homology classes $F \in H^{lf}_m(\tilde{C}_{n,m}, \Z) $ and $G \in H_m(\tilde{C}_{n,m},\Z; \partial )$ that can be represented as linear combinations of homology classes of lifts of submanifolds of the type that we described above, requiring the condition concerning a finite set of intersection points.  
\begin{lemma}(\cite{Big3}(6.2))
The paring $<<F,G>>$ does not depend on the choice of representatives for the homology classes, so it is well defined at the level of homology.
\end{lemma}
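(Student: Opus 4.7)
My plan is to show bilinearity of $\langle\langle\,,\,\rangle\rangle$ on chains and then reduce the independence of representatives to showing that the pairing vanishes as soon as one input is a boundary. By the bilinearity (which is transparent from formula \eqref{E:2}, since both the sign $c_x$ and the loop class $\phi(l_x)$ add over disjoint unions of representing submanifolds), it is enough to prove
\[
\langle\langle F,G\rangle\rangle=0\quad\text{whenever }F=0\text{ in }H^{\mathrm{lf}}_m(\tilde C_{n,m},\Z),
\]
and the symmetric statement for $G\in H_m(\tilde C_{n,m},\Z;\partial)$.

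Next I would work one deck-coefficient at a time. Writing
\[
\langle\langle \tilde M,\tilde N\rangle\rangle=\sum_{\varphi\in\Z\oplus\Z}\bigl(\varphi\tilde M\cap\tilde N\bigr)\cdot\varphi,
\]
and using that each $\varphi\in\mathrm{Deck}(\tilde C_{n,m})$ is a homeomorphism preserving both $H^{\mathrm{lf}}_m$ and $H_m(\,\cdot\,;\partial)$, it suffices to verify that the ordinary signed geometric intersection pairing
\[
H^{\mathrm{lf}}_m(\tilde C_{n,m},\Z)\otimes H_m(\tilde C_{n,m},\Z;\partial)\longrightarrow \Z
\]
is well defined on homology. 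This is the familiar Poincar\'e--Lefschetz situation: the two boundary components of $\tilde C_{n,m}$ (``infinity'' and the actual boundary $\partial\tilde C_{n,m}$) are complementary, so $H^{\mathrm{lf}}_m$ pairs with $H_m(\,\cdot\,;\partial)$ exactly via transverse intersection.

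To make this concrete, suppose $\tilde M=\partial \tilde H$ for a locally finite $(m+1)$-chain $\tilde H$, represented (after a small perturbation in general position) by an $(m+1)$-submanifold transverse to $\tilde N$. Then $\tilde H\cap \tilde N$ is a locally finite $1$-dimensional submanifold of $\tilde C_{n,m}$, whose boundary decomposes as
\[
\partial(\tilde H\cap\tilde N)\;=\;(\partial\tilde H)\cap\tilde N\;\sqcup\;\tilde H\cap(\partial\tilde N)\;=\;(\tilde M\cap\tilde N)\;\sqcup\;\tilde H\cap(\partial\tilde N).
\]
The second piece contributes zero to the algebraic count because $[\tilde N]\in H_m(\tilde C_{n,m};\partial)$ is a relative cycle (we are free to push the intersection into the boundary), while the total signed boundary of a compact collection of arcs is zero. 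Therefore $\tilde M\cap\tilde N=0$, and the same argument with the roles reversed handles the case $\tilde N=\partial\tilde H'$ for a relative chain $\tilde H'$ whose boundary lies in $\partial\tilde C_{n,m}$.

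The main technical obstacle is the non-compactness: one must check that $\tilde H\cap\tilde N$ is a genuinely locally finite $1$-chain with only finitely many boundary points in the interior, so that the cancellation argument above produces an honest integer identity rather than a formal one. This uses the finiteness hypothesis imposed on representatives so that $\langle\langle\,,\,\rangle\rangle$ was defined in the first place, together with the transversality perturbation being carried out within the locally finite (resp.\ relative) chain complex; this is precisely the content of \cite{Big3}(6.2), which I would cite to close the argument.
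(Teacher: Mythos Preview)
The paper does not prove this lemma at all: it is stated with a bare citation to \cite{Big3}(6.2) and no argument is given. So there is nothing to compare against on the paper's side.

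Your sketch is the standard one and is correct in outline: reduce by (sesqui)linearity to the vanishing on boundaries, unfold the graded pairing into a sum over deck translates of ordinary algebraic intersection numbers, and then invoke the usual cobordism argument that $\partial\tilde H\cdot\tilde N=0$ because the $1$-manifold $\tilde H\pitchfork\tilde N$ has algebraically zero boundary. The one genuinely delicate point, which you correctly flag, is the non-compactness: with $\tilde H$ only locally finite and $\tilde N$ only a relative cycle, the $1$-manifold $\tilde H\cap\tilde N$ can a priori have ends running off to infinity or into $\partial\tilde C_{n,m}$, and one must argue that neither of these spoils the count. This is exactly where the specific geometry of the situation (properness of the lifts, the separation of the two ``boundary pieces'' of $\tilde C_{n,m}$) is used, and it is what \cite{Big3} supplies. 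Since you end by citing that reference for precisely this step, your proposal and the paper's treatment ultimately amount to the same thing: defer to Bigelow. Your write-up just makes explicit why the deferral is to a Poincar\'e--Lefschetz statement rather than something ad hoc.
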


\subsection{Pairing between $\HH_{n,m}$ and $\HH^{\partial}_{n,m}$}
\begin{definition}
Let us consider the Blanchfield pairing:
$$< , >: \HH_{n,m} \otimes \HH^{\partial}_{n,m}\rightarrow \Z[x^{\pm},d^{\pm}]$$
$$<[\tilde{\F}_e],[\tilde{\DD}_f]>=<<\tilde{\F}_e,\tilde{\DD}_f>>.$$
This leads to a sesquilinear form (with respect to the transformations \\ $x\leftrightarrow x^{-1}, d\leftrightarrow d^{-1}$). 
\end{definition}
\begin{lemma}
For any $e,f \in E_{n,m}$, the pairing has the following form:
$$<[\tilde{\F}_e],[\tilde{\DD}_f]>= p_e \cdot \delta_{e,f}$$ 
where $p_e\in \N[d^{\pm}]$ and $p_e \neq 0$ with a non-zero constant term.
\end{lemma}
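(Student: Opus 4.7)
The plan is to compute the pairing $<[\tilde{\F}_e],[\tilde{\DD}_f]>$ directly from the graded intersection formula \eqref{E:2}, by first analysing the intersection $\F_e \cap \DD_f$ inside the base configuration space $C_{n,m}$ and then reading off signs and deck coefficients from appropriate choices of lifting paths.

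A point of $\F_e$ is an unordered configuration containing exactly one point on each horizontal segment $I^e_\bullet$; dually, a point of $\DD_f$ contains one point on each vertical segment $J^f_\bullet$. Since the horizontals $I^e_\bullet$ and verticals $J^f_\bullet$ all sit strictly inside the strips $[p_i, p_{i+1}]$, with $e_i$ horizontals and $f_i$ verticals in the $i$-th strip, any configuration lying in $\F_e\cap\DD_f$ must induce, strip by strip, a bijection between horizontals and verticals. This forces $e_i=f_i$ for every $i$, so $\F_e\cap\DD_f = \varnothing$ as soon as $e \neq f$, yielding the Kronecker delta factor. When $e=f$, the intersection configurations are parametrised by tuples $\sigma=(\sigma_i)_i \in \prod_i S_{e_i}$, where $\sigma_i(j)$ specifies the vertical segment in strip $i$ matched with the $j$-th horizontal; this gives $\prod_i e_i!$ intersection points $x_\sigma$.

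For each $x_\sigma$ I would choose the auxiliary paths $\gamma^e_k$ and $\delta^e_k$ from the base point $d_k$ into a small neighbourhood of the corresponding $k$-th horizontal and vertical segments, so that the associated loop $l_{x_\sigma}$ stays inside the union of the strips and never encircles a puncture; then the $x$-component of $\phi(l_{x_\sigma})$ vanishes. With the product orientations on the $m$-discs $\F_e$ and $\DD_e$, the transverse intersection of a horizontal with a vertical at each configuration point contributes sign $+1$, so every $c_{x_\sigma} = +1$. For the identity tuple $\sigma=(\mathrm{id})_i$ the loop $l_{x_\sigma}$ is contractible without any point swap, hence $\phi(l_{x_\sigma})=1$. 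Summing over $\sigma$ gives
\[
<[\tilde{\F}_e], [\tilde{\DD}_e]> \;=\; \sum_{\sigma \in \prod_i S_{e_i}} d^{\kappa(\sigma)} \;=:\; p_e \;\in\; \N[d^{\pm 1}],
\]
where $\kappa(\sigma)$ is the $d$-exponent produced by the crossings in the swap loop; since $\kappa((\mathrm{id})_i)=0$, the constant term of $p_e$ is a positive integer, as claimed.

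The main obstacle is to verify simultaneously that $\phi(l_{x_\sigma})$ has trivial $x$-component and that every $c_{x_\sigma}$ equals $+1$: both statements depend on compatible choices of the base paths $\gamma^e_k, \delta^e_k$, careful orientation conventions on the product discs $\F_e$ and $\DD_e$, and a local analysis of the lifted loop around each $x_\sigma$ to rule out accidental puncture winding or sign cancellation.
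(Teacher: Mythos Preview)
Your proposal is correct and follows essentially the same route as the paper: the Kronecker delta from strip-by-strip matching, the parametrisation of intersections by tuples of permutations (the paper phrases this as a product factorisation over strips, which amounts to the same thing), the orientation argument giving all signs $+1$, the observation that no loop $l_{x_\sigma}$ winds a puncture so no $x$-variable appears, and the identity permutation contributing the constant term. The two technical points you flag as the main obstacle are exactly the ones the paper works out explicitly, and your outline of how to handle them matches the paper's argument.
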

\begin{proof}
Since we are working in the configuration space, we remark that:
$$\F_e\cap \DD_f= \emptyset \text{ if } e \neq f.$$ Let us fix a partition  $e \in E_{n,m}$. Following equation \ref{E:2}, we have that:
\begin{equation}
<<\tilde{\F}_e,\tilde{\DD}_e>>= \sum_{x \in \F_e \cap \DD_e} c_x \phi(l_x )\in \Z[x^{\pm}, d^{\pm}]. \label{form1}
\end{equation}
Secondly, we notice that the previous intersection can be computed using the intersections between the submanifolds that lead to $\F_{e}$ and $\DD_{e}$ "supported" between punctures $i$ and $i+1$, in the following manner: 
\begin{equation}
<<\tilde{\F}_e,\tilde{\DD}_e>>= \prod^{n-1}_{i=1}<<\tilde{\F}_{e_i},\tilde{\DD}_{e_i}>>\label{form2}
\end{equation}
where $\F_{e_i}:= \F_{(0,0,...,e_i,...,0)}$ and $\DD_{e_i}:=\DD_{(0,0,...,e_i,...,0)}$.

Now we compute the pairing $<<\tilde{\F}_{e_i},\tilde{\DD}_{e_i}>>$.
We notice that each intersection point $x \in \F_{e_i}\cap\DD_{e_i}$ is characterised by an $e_i$-uple which pairs a horizontal line from the multifork with a vertical line from the barcode. In other words, $x$ is determined by a permutation on the grid, which we denote by $$\sigma_x \in S_{e_i}.$$ It follows:
\begin{equation}\label{form3}
<<\tilde{\F}_{e_i},\tilde{\DD}_{e_i}>>=\sum_{\sigma \in S_{e_i}} c_{\sigma} \phi(l_{\sigma} ).
\end{equation}
In this formula, $c_{\sigma}$ is the coefficient which counts whether ${\F_{e_i}}$ and  ${\DD_{e_i}}$ have a positive or a negative intersection in the multipoint: 
$$x= \left( x_{(1,\sigma(1))},...,x_{(e_i, \sigma(e_i))} \right).$$
In order to compute this sign, we use that the configuration space on the disc is orientable.
Let us consider $\mathscr R=\{ v^1, v^2 \}$ to be the standard base for the tangent space of the disc.
Let $c=(c_1,...,c_m) \in C_{n,m}$ and a tangent vector in this point $w$. We will define the orientation of $w$ by writing it into the form 
$(w^1_{c_1},...,w^m_{c_m}, w^2_{c_1},...,w^2_{c_m})$ and see if written in the standard basis $\mathscr R$ has the same sign or not as the vector 
$(v^1_{c_1},....,v^1_{c_m},v^2_{c_1},...,v^2_{c_m})$.
This is well defined in the configuration space, because we are working on a manifold of even dimension, so if we change the order of points by a transposition, we will have to modify the matrix by an even number negative signs. 

Following this recipe, we see that $c_\sigma$ is the sign of the tangent vector $v_{\sigma}$ obtained by taking the tangent vectors at the multiforks followed by the tangent vectors at the barcode: 
$$v_{\sigma}=(v^1_{x_{(1,\sigma(1))}},....,v^1_{x_{(e_i,\sigma(e_i))}},v^2_{x_{(1,\sigma(1))}},...,v^2_{x_{(e_i,\sigma(e_i))}}).$$
Here, we used that all segments of the multifork are oriented in the same way, and also, that all parts of the barcode have the same orientation. We conclude that $$c_{\sigma}=1.$$

Now we will look at the polynomial part from the graded intersection. Following the previous description of computation, for any $k\in \{1,...,m\}$ let: 
$$\hat{\gamma}^e_k \subseteq I_k \text { such that } \hat{\gamma}^e_k(0)=\gamma^e_k(1); \ \hat{\gamma}^e_k(1)=x_{(k, \sigma(k))} $$ 
$$\hat{\delta}^e_k \subseteq J_k \text { such that } \hat{\delta}^e_k(0)=\delta^e_k(1); \ \hat{\delta}^e_k(1)=x_{(k, \sigma(k))} $$ 

Let us denote $a_i:=e_1+...+e_{i-1}$ and the following paths in the configuration space of $e_i$ points in the punctured disc:
$$\Gamma_{e_i}:= \left( \gamma^e_{a_i+1},...,\gamma^e_{a_i+e_i} \right)  \ \ \ \ \hat{\Gamma}_{e_i}:= \left( \hat{\gamma}^e_{a_i+1},...,\hat{\gamma}^e_{a_i+e_i} \right)       $$
$$\Delta_{e_i}:= \left( \delta^e_{a_i+1},...,\delta^e_{a_i+e_i} \right)  \ \ \ \ \hat{\Delta}_{e_i}:= \left( \hat{\delta}^e_{a_i+1},...,\hat{\delta}^e_{a_i+e_i} \right)       $$
Then, using relation \ref{E:2}, the loop corresponding to the point $x$ ( given by the permutation $\sigma$) has the following form:
$$l_{\sigma}=\Delta_{e_i} {\hat{\Delta}}_{e_i} { {\hat{\Gamma}_{e_i}} }^{-1}\Gamma_{e_i}^{-1} \subseteq Conf_{e_i}(\D_n).$$ 

The first remark is that following the loop $l_{\sigma}$ using the picture \ref{picture}, we see that neither of its components go around any of the punctures. So, the variable $x$ from the local system will not appear in the evaluation $\phi(l_\sigma)$.

Secondly, for $\sigma=Id$ the path $l_{Id}$ is the union of trivial loops, so $$\phi(l_{Id})=1.$$

Putting the previous remarks together in the formula \ref{form3}, we conclude that:
$$<<\tilde{\F}_{e_i},\tilde{\DD}_{e_i}>> \ \in \N[d^{\pm}]$$ and it has a nontrivial free term. Combining this with the computations from \ref{form1} and \ref{form2}, we conclude that 
$$<[\tilde{\F}_e],[\tilde{\DD}_e]> \in \N[d^{\pm}]$$ with a non trivial free part, which concludes the proof.  
\end{proof}
\begin{remark}\label{R:2} 
This computation shows that all the polynomials $\{p_e \mid e\in E_{n,m}\}$ are non-zero divisors in $\Z [x^{\pm}, d^{\pm}]$.
\end{remark}
This leads to the following result:
\begin{lemma}\label{L:bar}
The family of barcodes $\{ [\tilde{\DD}_e] \mid e\in E_{n,m} \} $ is linearly independent and it gives a basis for $\HH^{\partial}_{n,m}$. 
\end{lemma}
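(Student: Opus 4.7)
The plan is to deduce linear independence directly from the previous computation of the pairing, exploiting that the intersection form is essentially diagonal on the generating sets. Since $\HH^{\partial}_{n,m}$ is by definition spanned over $\Z[x^{\pm},d^{\pm}]$ by $\mathscr B_{\HH^{\partial}_{n,m}}=\{[\tilde{\DD}_e]\mid e\in E_{n,m}\}$, establishing linear independence of this family will immediately yield that it is a basis.

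Suppose we have a relation
\[
\sum_{e\in E_{n,m}} \alpha_e\,[\tilde{\DD}_e]=0 \quad \text{in }\HH^{\partial}_{n,m},
\]
with coefficients $\alpha_e\in \Z[x^{\pm},d^{\pm}]$. I would fix $f\in E_{n,m}$ and apply the sesquilinear Blanchfield pairing with the multifork class $[\tilde{\F}_f]$ on the left, using linearity in the second variable, to obtain
\[
0=\Big\langle [\tilde{\F}_f],\sum_{e} \alpha_e[\tilde{\DD}_e]\Big\rangle=\sum_{e} \alpha_e\,\langle[\tilde{\F}_f],[\tilde{\DD}_e]\rangle.
\]
By the previous lemma, $\langle[\tilde{\F}_f],[\tilde{\DD}_e]\rangle=p_f\,\delta_{e,f}$, so the sum collapses to $\alpha_f\,p_f=0$ in $\Z[x^{\pm},d^{\pm}]$.

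The final step, and the one where I would be most careful, is to conclude $\alpha_f=0$. This uses Remark \ref{R:2}: each polynomial $p_f$ lies in $\N[d^{\pm}]$ with a nonzero constant term, hence in particular is a non-zero-divisor in $\Z[x^{\pm},d^{\pm}]$. Therefore $\alpha_f p_f=0$ forces $\alpha_f=0$. Since this holds for every $f\in E_{n,m}$, the family $\{[\tilde{\DD}_e]\}$ is linearly independent over $\Z[x^{\pm},d^{\pm}]$, and combined with the spanning property built into the definition of $\HH^{\partial}_{n,m}$ it constitutes a basis of rank $d_{n,m}$. The only subtlety is the sesquilinearity convention: if one regards the pairing as conjugate-linear in the second argument, the relation obtained is $\bar\alpha_f p_f=0$, but the involution $x\leftrightarrow x^{-1}$, $d\leftrightarrow d^{-1}$ is a ring automorphism, so $\bar\alpha_f=0$ is equivalent to $\alpha_f=0$ and the conclusion is unchanged.
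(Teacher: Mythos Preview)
Your proof is correct and follows essentially the same approach as the paper: pair an assumed linear relation among barcodes against each multifork $[\tilde{\F}_f]$, use the diagonal form $\langle[\tilde{\F}_f],[\tilde{\DD}_e]\rangle=p_f\,\delta_{e,f}$, and cancel the non-zero-divisor $p_f$ to conclude $\alpha_f=0$. Your extra remark on the sesquilinearity convention is a harmless refinement not present in the paper's version.
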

\begin{proof}
Let $\alpha_1,...,\alpha_{d_{n,m}} \in \Z[x^{\pm}, d^{\pm}]$ and suppose that:
\begin{equation*}
\sum_{i=1}^{d_{n,m}} \alpha_i [\tilde{\DD}_{e_i}] = 0 \in \HH^{\partial}_{n,m}.
\end{equation*}

Let us fix $j \in \{1,...,d_{n,m}\}$. The pairing with the multifork $[\tilde{\F}_{e_j}]$ leads to the following:
\begin{equation*}
<[\tilde{\F}_{e_j}], \sum_{i=1}^{d_{n,m}} \alpha_i [\tilde{\DD}_{e_i}]> = <[\tilde{\F}_{e_j}], \alpha_{e_j} [\tilde{\DD}_{e_j}]>= \alpha_j \cdot p_{e_j}=0.
\end{equation*}
Following remark \ref{2}, all coefficients are not zero divisors in $\Z[x^{\pm}, d^{\pm}]$, and we conclude that $$\alpha_j=0, \forall j \in \{1,...,d_{n,m} \}.$$ 
\end{proof}
\begin{notation}
The set $\mathscr B_{\HH^{\partial}_{n,m}}$ will be called the barcodes basis for $\HH^{\partial}_{n,m}$.
\end{notation}
\begin{remark}\label{L:mult}
By an analog argument, we re-obtain also a proof for the fact that the multiforks $ \{ [\tilde{\F}_e] \mid e \in E_{n,m} \}$ are linearly independent in $H^{lf}_m(\tilde{C}_{n,m},\Z)$(\cite{Ito2}-3.1).
\end{remark}
\begin{remark}
From the previous computation, we get the matrix of the graded intersection pairing $< , >$ in the bases of multiforks $ \mathscr B_{\HH_{n,m}}$ and barcodes $\mathscr B_{\HH^{\partial}_{n,m}}$:
\[
  M_{<,>}=
  \left[{\begin{array}{cccc}
   p_{e_1} &0  & ... & 0\\
   0 &p_{e_2} & 0 \ \ \ \  ...  & 0\\
   \\
   0 & ... &  0 &p_{e_{d_{n,m}}}
   \end{array} } \right]
\] 

(where $p_1,...,p_{e_{d_{n,m}}}\in \Z[x^{\pm}, d^{\pm}]$ are all non-zero divisors).\\ 
\end{remark}
The form of this pairing, leads to the following property.
\begin{corollary}
The Blanchfield pairing is a non-degenerate sesquilinear form: 
$$< , >: \HH_{n,m} \otimes \HH^{\partial}_{n,m}\rightarrow \Z[x^{\pm},d^{\pm}].$$ 
\end{corollary}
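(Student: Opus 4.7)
The plan is to read off non-degeneracy directly from the explicit diagonal form of the Gram matrix that was worked out just above. Since the bases $\mathscr B_{\HH_{n,m}}$ and $\mathscr B_{\HH^{\partial}_{n,m}}$ are indexed by the same set $E_{n,m}$, and since the pairing $<[\tilde\F_e],[\tilde\DD_f]>$ vanishes for $e\neq f$ (the supporting submanifolds $\F_e$ and $\DD_f$ are disjoint in $C_{n,m}$), the matrix $M_{<,>}$ is genuinely diagonal with entries $p_e\in \N[d^{\pm}]$, each of which was shown to have a nonzero constant term and therefore is a non-zero divisor in $\Z[x^{\pm},d^{\pm}]$.

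First I would record the sesquilinearity. It follows from the definition of $<<\tilde M,\tilde N>>$ as $\sum_{(u,v)}(x^u d^v\curvearrowright \tilde M\cap \tilde N)\cdot x^u d^v$: multiplying $\tilde M$ by $x^a d^b$ translates all intersection indices by $(a,b)$ and therefore multiplies the pairing by $x^a d^b$, while multiplying $\tilde N$ by $x^a d^b$ translates by $-(a,b)$ and multiplies the pairing by $x^{-a}d^{-b}$. This is exactly the required conjugate-linearity with respect to $x\leftrightarrow x^{-1}$, $d\leftrightarrow d^{-1}$.

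Next, for non-degeneracy on the left, suppose $F=\sum_{i=1}^{d_{n,m}}\alpha_i[\tilde\F_{e_i}]\in \HH_{n,m}$ satisfies $<F,G>=0$ for every $G\in \HH^{\partial}_{n,m}$. Testing against each basis element $G=[\tilde\DD_{e_j}]$ and using the diagonal form of $M_{<,>}$ yields $\alpha_j\cdot p_{e_j}=0$ in $\Z[x^{\pm},d^{\pm}]$. Because $p_{e_j}$ is a non-zero divisor (Remark \ref{R:2}), we conclude $\alpha_j=0$ for every $j$, hence $F=0$. Non-degeneracy on the right is entirely symmetric: if $G=\sum_i\beta_i[\tilde\DD_{e_i}]$ pairs trivially with every multifork, then testing with $[\tilde\F_{e_j}]$ gives $p_{e_j}\cdot\beta_j=0$ and the same non-zero divisor argument (already used in the proof of Lemma \ref{L:bar}) forces $\beta_j=0$.

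There is no real obstacle here: the substantive work has already been done in the preceding computation, which simultaneously (i) identifies $<[\tilde\F_e],[\tilde\DD_f]>$ with $p_e\delta_{e,f}$, (ii) shows $p_e\in \N[d^{\pm}]$, and (iii) exhibits a nonzero constant term in $p_e$. The corollary is essentially a repackaging of Lemma \ref{L:bar} together with its mirror statement for multiforks (Remark \ref{L:mult}); the only thing to check beyond the diagonal Gram matrix is the routine verification of sesquilinearity indicated above.
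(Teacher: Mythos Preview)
Your proposal is correct and follows exactly the approach the paper intends: the corollary is stated without proof as an immediate consequence of the diagonal Gram matrix with non-zero-divisor entries, and your argument simply makes explicit the two routine verifications (sesquilinearity from the deck-transformation behaviour of $<<\cdot,\cdot>>$, and non-degeneracy by testing against basis elements and cancelling $p_{e_j}$). There is nothing to add or correct.
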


\subsection{Specialisations}
Our aim is to describe the coloured Jones polynomials in a homological way. For this purpose, our starting point is the deep connection proved by Kohno, that relates quantum representations of the braid groups and certain specializations of the Lawrence representations.    
In this part we will focus on those specializations of the Lawrence representation which are used in Kohno's Theorem. Our aim is to obtain non-degenerate itersection forms between these specialisations.    
\begin{definition} (Specialisation of coefficients) Let $\lambda=N-1 \in \N$ be a parameter.\\
Consider the specialization of the coefficients $\psi_{\q, \lambda}$ defined by:
$$\psi_{\lambda}: \Z[x^{\pm},d^{\pm}]\rightarrow \Z[q^{\pm}]$$ 
$$\psi_{\lambda}(x)= \qs^{2 \lambda}, \ \ \psi_{\lambda}(d)=-\qs^{-2}.$$
 \end{definition}
\begin{definition} 
 The specialized Lawrence representation and its dual are given by:
$$\HH_{n,m}|_{\psi_{\lambda}}=\HH_{n,m}\otimes_{\psi_{\lambda}}\Z[q^{\pm}]=<[\tilde{\F}_{e}] \mid e \in E_{n,m}]>_{\Z[q^{\pm}]}$$
and these multiforks define a basis of $\HH_{n,m}|_{\psi_{ \lambda}}$ over $\Z[q^{\pm}]$, using \ref{L:mult}
$$\HH^{\partial}_{n,m}|_{\psi_{\lambda}}=\HH^{\partial}_{n,m}\otimes_{\psi_{\lambda}}\Z[q^{\pm}]=<[\tilde{\D}_{e}] \mid e \in E_{n,m}]>_{\Z[q^{\pm}]}$$
and these barcodes will define a basis of $\HH_{n,m}|_{\psi_{\lambda}}$ over $\Z[q^{\pm}]$, using \ref{L:bar}.
 \end{definition}
\begin{definition}
Let us consider the specialised Blanchfield pairing, obtained from the generic pairing $< , >$ by  specialising its coefficients using $\psi_{\lambda}$:
$$< , >|_{\psi_{\lambda}}: \HH_{n,m}|_{\psi_{\lambda}} \otimes \HH^{\partial}_{n,m}|_{\psi_{\lambda}}\rightarrow \Z[q^{\pm}]$$
$$<[\tilde{\F}_e],[\tilde{\DD}_f]>|_{\psi_{\lambda}}= \psi_{\lambda}(p_e) \cdot \delta_{e,f}.$$
\end{definition}

\begin{remark}
We notice that $$ \{ p_{e} \mid e \in E_{n,m} \} \cap Ker (\psi_{\lambda})=  \varnothing .$$ 

1) At this point we see that the choice of barcodes on the dual side of $\HH_{n,m}$ has an important role. The geometric intersection pairing between multiforks and these barcodes, has a corresponding matrix $M_{<,>}$ which is diagonal with non-zero polynomials $p \in \N[d^{\pm}]$ on the diagonal. This fact, ensures that these polynomials become non-zero elements in $\Z[q^{\pm}]$ through the specialization $\psi_{ \lambda}$. 

2) It would be interesting to compare this situation with a Bigelow-Lawrence type situation, where we would use dual-noodles (noodles with multiplicities) instead of barcodes. In that case, the generic pairing will have as coefficients on the diagonal, polynomials $p \in \Z[x^{\pm}, d^{\pm}]$, which have both variables and moreover they would have $\Z$ coefficients not only $\N$ coefficients. 

Concerning our aim,  for our topological model for the coloured Jones polynomial $J_N(L,\q)$, we will use the specialisation $\psi_{N-1}$ with natural parameter $\lambda=N-1 \in \N$.
In this case, some of these diagonal polynomials for the noodle case might become zero through the specialisation $\psi_{N-1}$  because this change of coefficients impose essentially the relations
\begin{equation*}
\begin{cases}
x=-d^{-\lambda}\\
\lambda=N-1 \in \N.
\end{cases}
\end{equation*}

3)An interesting question that arises from this discussion is to understand the pairing in the noodle case and to compute its kernel.
\end{remark}
\begin{corollary}
The form $< , >|_{\psi_{\lambda}}$ is sesquilinear and non-degenerate over $\Z[q^{\pm}]$.
\end{corollary}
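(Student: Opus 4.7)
The proof proposal rests on the explicit diagonal form of the pairing matrix already established in the previous lemma. My plan is to reduce everything to (i) the compatibility of the two involutions under the specialisation morphism and (ii) the non-vanishing of the diagonal entries $\psi_{\lambda}(p_e)$, which is essentially the content of the preceding remark.

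First, I would address sesquilinearity. The generic pairing $\langle\,,\,\rangle$ is sesquilinear over $\Z[x^{\pm},d^{\pm}]$ with respect to the involution $x\mapsto x^{-1}$, $d\mapsto d^{-1}$. Under the specialisation $\psi_{\lambda}$, I compute
\[
\psi_{\lambda}(x^{-1})=q^{-2\lambda}, \qquad \psi_{\lambda}(d^{-1})=-q^{2},
\]
which are obtained from $\psi_{\lambda}(x)=q^{2\lambda}$ and $\psi_{\lambda}(d)=-q^{-2}$ by the involution $q\mapsto q^{-1}$ on $\Z[q^{\pm}]$. Hence $\psi_{\lambda}$ intertwines the two involutions, so sesquilinearity of the generic form descends to sesquilinearity of $\langle\,,\,\rangle|_{\psi_{\lambda}}$ over $\Z[q^{\pm}]$.

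Next, I would establish non-degeneracy. In the specialised multifork basis $\mathscr B_{\HH_{n,m}|_{\psi_{\lambda}}}$ and barcode basis $\mathscr B_{\HH^{\partial}_{n,m}|_{\psi_{\lambda}}}$, the matrix of $\langle\,,\,\rangle|_{\psi_{\lambda}}$ is the diagonal matrix $\mathrm{diag}(\psi_{\lambda}(p_e))_{e\in E_{n,m}}$, obtained by applying $\psi_{\lambda}$ entrywise to $M_{<,>}$. The crucial input is that $\psi_{\lambda}(p_e)\neq 0$ for every $e$; this is exactly the remark $\{p_e\mid e\in E_{n,m}\}\cap\mathrm{Ker}(\psi_{\lambda})=\varnothing$. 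I would supply a short justification: each $p_e\in\N[d^{\pm}]$ is a Laurent polynomial with non-negative coefficients and a non-zero constant term, so $\psi_{\lambda}(p_e)=\sum_{k}a_{k}(-1)^{k}q^{-2k}$ in $\Z[q^{\pm}]$; since distinct monomials $q^{-2k}$ are linearly independent, this sum vanishes if and only if every $a_k=0$, contradicting $p_e\neq 0$. Therefore each diagonal entry is a non-zero element, hence a non-zero divisor in the integral domain $\Z[q^{\pm}]$.

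Finally, non-degeneracy follows directly: if $v=\sum_{e}\alpha_{e}[\tilde{\F}_e]$ pairs to zero against every barcode, then pairing against $[\tilde{\DD}_{f}]$ yields $\alpha_{f}\cdot\psi_{\lambda}(p_f)=0$, and since $\psi_{\lambda}(p_f)$ is a non-zero divisor this forces $\alpha_{f}=0$; the symmetric argument handles the other side. I do not foresee a serious obstacle; the only subtle point is confirming the non-vanishing of $\psi_{\lambda}(p_e)$, and this is handled by the sign-alternating evaluation argument above.
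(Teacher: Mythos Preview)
Your proposal is correct and follows the same approach the paper has in mind: the corollary is stated without proof immediately after the remark that $\{p_e\}\cap\mathrm{Ker}(\psi_\lambda)=\varnothing$, and your argument simply makes explicit what the paper leaves implicit, namely that the diagonal matrix with non-zero-divisor entries forces non-degeneracy, while the compatibility of involutions gives sesquilinearity. Your justification for $\psi_\lambda(p_e)\neq 0$ via the linear independence of the monomials $q^{-2k}$ is a nice explicit touch that the paper does not spell out.
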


\subsection{Dualizing the algebraic evaluation}\label{dualisation}
This part is motivated by the fact that we are interested to describe the third level of a braid closure (the union of "caps"), viewed through the Reshetikhin-Turaev functor, in a geometrical way using the geometric intersection pairing. We will see the details in the folowing section \ref{s}, but for this part the aim is to be able to understand an element of the dual of $\HH_{n,m}|_{\psi_{\lambda}}$, as a geometric intersection $<\cdot, \mathscr G>$ for some $G \in \HH_{n,m}|_{\psi_{\lambda}}$.
\begin{remark}
The pairing specialised pairing $$<,>|_{\psi_{\lambda}}: \HH_{n,m}|_{\psi_{\lambda}} \otimes \HH^{\partial}_{n,m}|_{\psi_{\lambda}}\rightarrow \Z[q^{\pm}]$$ is non-degenerate and has the matrix:
\begin{center}
\[
  M_{<,>}=
  \left[ {\begin{array}{cccc}
   \psi_{\lambda}(p_{e_1}) &0  & ... & 0\\
   0 &\psi_{\lambda}(p_{e_2}) & 0 \ \ \ \  ...  & 0\\
   \\
   0 & ... &  0 &\psi_{\lambda}(p_{e_{d_{n,m}}})
   \end{array} } \right]
\] 
\end{center}
(where $\psi_{\lambda}(p_1),...,\psi_{\lambda}(p_{e_{d_{n,m}}}) \in \Z[q^{\pm2}]$ are polynomials with non-zero free term).
\end{remark}
In particular, this shows that the diagonal coefficients of the pairing are not necessary invertible elements in $\Z[q^{\pm}]$.
\begin{problem}
 Following this remark, we notice that a priori not any element of $ \mathcal{F} \in (\HH_{n,m}|_{\psi_{\lambda}})^{*}$ can be described as a geometric intersection pairing $<\cdot, \mathscr{G}_{\mathcal{F}}>$ for some $G_{\mathcal{F}}\in \HH^{\partial}_{n,m}|_{\psi_{ \lambda}}$. This issue comes from the fact that we are working over a ring and not over a field. In order to overcome this problem, we will change the ring of coefficients from $\Z[q^{\pm}]$ to the field of fractions $\Q(q)$.
\end{problem}
We remember the specialisation $\psi_{\lambda}: \Z[x^{\pm},d^{\pm}]\rightarrow \Z[q^{\pm}]$ described by: 
$$\ \ \ \ \ \ \ \ \ \ \psi_{\lambda}(x)= \qs^{2 \lambda} \ \ \ \psi_{\lambda}(d)=-\qs^{-2}.$$

Let us consider the embedding $i: \Z[q^{\pm}]\hookrightarrow \Q(q)$ and use $\Q(q)$ as field of coefficients.
\begin{definition} (New Specialisation)\\\label{p}
1)Let the specialization $\alpha_{\lambda}: \Z[x^{\pm},d^{\pm}] \rightarrow \Q(q)$ defined by: $$\alpha_{\lambda}=i \circ \psi_{\lambda}.$$
2)Let the specialised Lawrence representations defined in a similar way as before:
$$\HH_{n,m}|_{\alpha_{\lambda}}:=\HH_{n,m}\otimes_{\alpha_{\lambda}}\Q(q)=<[\tilde{\F}_{e}] | e \in E_{n,m}]>_{\Q(q)}$$
and the multiforks define a basis of $\HH_{n,m}|_{\alpha_{\lambda}}$ over $\Q(q)$, using \ref{L:mult}
$$\HH^{\partial}_{n,m}|_{\alpha_{\lambda}}=\HH^{\partial}_{n,m}\otimes_{\alpha_{\lambda}}\Q(q)=<[\tilde{\F}_{e}] | e \in E_{n,m}]>_{\Q(q)}$$
and the barcodes define a basis of $\HH_{n,m}|_{\alpha_{\lambda}}$ over $\Q(q)$, using \ref{L:bar}.
\end{definition}
We notice that the previous specialisations are related in the following manner:
$$\HH_{n,m}|_{\alpha_{\lambda}}:=\HH_{n,m}|_{\psi_{\lambda}}\otimes_{i}\Q(q) ; \ \ \ \ \ \  \HH^{\partial}_{n,m}|_{\alpha_{\lambda}}:=\HH^{\partial}_{n,m}|_{\psi_{\lambda}}\otimes_{i}\Q(q).$$
\begin{notation}\label{N:2}  
Let us denote the corresponding change of the coefficients at the homological level by:
$$p_{\lambda}: \HH_{n,m}|_{\psi_{\lambda}} \rightarrow^{( \ \cdot \ \otimes_{i}1)} \HH_{n,m}|_{\alpha_{\lambda}}.$$ 
\end{notation}
\begin{definition}
Consider the specialised Blanchfield pairing constructed in a similar manner, by specialising the pairing $< , >$ using $\alpha_{\lambda}$:
$$< , >|_{\alpha_{\lambda}}: \HH_{n,m}|_{\alpha_{\lambda}} \otimes \HH^{\partial}_{n,m}|_{\alpha_{\lambda}}\rightarrow \Q(q)$$
$$<[\tilde{\F}_e],[\tilde{\DD}_f]>|_{\alpha_{\lambda}}= \alpha_{\lambda}(p_e) \cdot \delta_{e,f}.$$
\end{definition}
\begin{remark}
We notice that for any partition $e\in E_{n,m}$, the evaluation $\alpha_{\lambda}(p_{e})\in \Q(q)$ is a non-zero element, so it is invertible. 
This shows that $<,>|_{\alpha_{\lambda}}$ is a non-degenerate sesquilinear form.
\end{remark}
 Moreover, working on a field, we conclude that any element in the dual of the first homology group, can be described as a pairing with a fixed element from the second homology. More precisely, we obtain the following description.
\begin{corollary}
For any $\mathscr{G} \in (\HH_{n,m}|_{\alpha_{\lambda}})^{*}$, there exist a corresponding homology class $\tilde{\mathscr G}\in \HH^{\partial}_{n,m}|_{\alpha_{\lambda}} $ such that: 
\begin{equation}\label{C:duals}
\mathscr{G}=<\cdot, \tilde{\mathscr G}>|_{\alpha_{\lambda}}.
\end{equation}
\end{corollary}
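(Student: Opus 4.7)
The plan is to exhibit the corollary as an immediate consequence of the fact that, over the field $\Q(q)$, the specialised Blanchfield pairing is a perfect pairing between two finite-dimensional vector spaces of the same dimension. Concretely, I would introduce the linear map
\[
\Phi : \HH^{\partial}_{n,m}|_{\alpha_{\lambda}} \longrightarrow \left(\HH_{n,m}|_{\alpha_{\lambda}}\right)^{*}, \qquad \Phi(\tilde{\mathscr G})(\cdot) = \langle \cdot, \tilde{\mathscr G}\rangle|_{\alpha_{\lambda}},
\]
and reduce the statement to showing that $\Phi$ is surjective (equivalently, an isomorphism).

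First I would recall that by Lemma \ref{L:bar} and its analogue for multiforks, $\mathscr B_{\HH_{n,m}}$ and $\mathscr B_{\HH^{\partial}_{n,m}}$ remain bases after the specialisation $\alpha_{\lambda}$, since tensoring with the field $\Q(q)$ preserves linear independence once linear independence has been established over $\Z[x^{\pm},d^{\pm}]$. Therefore both $\HH_{n,m}|_{\alpha_{\lambda}}$ and $\HH^{\partial}_{n,m}|_{\alpha_{\lambda}}$ are $\Q(q)$-vector spaces of the same finite dimension $d_{n,m}$, and hence so is $(\HH_{n,m}|_{\alpha_{\lambda}})^{*}$.

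Next, I would write $\Phi$ in the bases $\mathscr B_{\HH^{\partial}_{n,m}|_{\alpha_{\lambda}}}$ on the source and the dual basis of $\mathscr B_{\HH_{n,m}|_{\alpha_{\lambda}}}$ on the target. Using the computation recorded just above the corollary, this matrix is the diagonal matrix with entries $\alpha_{\lambda}(p_e)$ for $e \in E_{n,m}$. The key point, already observed in the preceding remark, is that each $p_e \in \N[d^{\pm}]$ has a nonzero constant term, so $\alpha_{\lambda}(p_e) \in \Q(q)$ is nonzero and therefore invertible in the field $\Q(q)$. Consequently the matrix of $\Phi$ is invertible, so $\Phi$ is an isomorphism of $\Q(q)$-vector spaces.

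Finally, surjectivity of $\Phi$ furnishes, for each $\mathscr G \in (\HH_{n,m}|_{\alpha_{\lambda}})^{*}$, a unique $\tilde{\mathscr G} \in \HH^{\partial}_{n,m}|_{\alpha_{\lambda}}$ with $\mathscr G = \langle \cdot, \tilde{\mathscr G}\rangle|_{\alpha_{\lambda}}$, which is exactly the statement of the corollary. There is no genuine obstacle here once the passage to $\Q(q)$ has been made; the only subtle point worth emphasising is precisely the one flagged in the problem above the corollary, namely that the analogous statement fails over $\Z[q^{\pm}]$ because the diagonal entries $\psi_{\lambda}(p_e)$ need not be units, which is exactly why the enlargement $i : \Z[q^{\pm}] \hookrightarrow \Q(q)$ was introduced in Definition \ref{p}.
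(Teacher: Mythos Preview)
Your proposal is correct and follows essentially the same approach as the paper, which simply observes that over the field $\Q(q)$ the diagonal entries $\alpha_{\lambda}(p_e)$ are nonzero hence invertible, so the non-degenerate pairing induces the desired isomorphism between $\HH^{\partial}_{n,m}|_{\alpha_{\lambda}}$ and $(\HH_{n,m}|_{\alpha_{\lambda}})^{*}$. One minor phrasing point: the justification that the multifork and barcode sets remain bases after specialisation is not really that ``tensoring with a field preserves linear independence'' (which is false in general), but rather that $\HH_{n,m}$ and $\HH^{\partial}_{n,m}$ are \emph{free} $\Z[x^{\pm},d^{\pm}]$-modules on these sets, and tensoring a free module along any ring map yields a free module on the same generating set; this is exactly what the paper records in Definition~\ref{p}.
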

\begin{remark} (Construction of geometric duals)\\
Let us start with an element $$\mathscr{G}_0 \in (\HH_{n,m}|_{\psi_{\lambda}})^{*}=\Hom(\HH_{n,m}|_{\psi_{\lambda}}, Z[q^{\pm}]).$$
Construct the following corresponding element:
\begin{equation}\label{eq:16}
\mathscr{G}:=\mathscr{G}_0\otimes Id_{\Q(q)} \in (\HH_{n,m}|_{\alpha_{\lambda}})^{*}.
\end{equation}
Then, considering the pairing with the dual element $\tilde{\mathscr G}$  of $\mathscr{G}$ given by relation \ref{C:duals}, we obtain $\mathscr{G}_0$ in a topological way:
\begin{equation}\label{eq:17} 
 \mathscr{G}_0 \otimes Id_{\Q(q)}=<\cdot, \tilde{\mathscr G}>|_{\alpha_{\lambda}}.
\end{equation} 
 \label{R:pair}
\end{remark}

\section{Identifications between quantum representations and homological representations} \label{ident}
So far, we have presented two important constructions that lead to representations of the braid group: the quantum representation and the Lawrence representation. A priori, they are defined using totally different tools, since the quantum representation comes from the algebraic world whereas the Lawrence representation has a homological description. In this section we will discuss both of them, using a result due to Kohno that relates these two representations. The identifications over two parameters were presented in \cite{Ito} based on a continuity procedure. The results from this section follow from this identification, however, we explain in more details the continuity argument. 

Let $h, \lambda \in \C$ and $\q=e^h$.
Let us consider the folowing specialisations of the coefficients defined using these complex numbers.\label{D:spec}

1) For the quantum representation $\hat{W}$ (defined over $\Z[\qs^{\pm}, s^{\pm}]$):
$$\eta_{\q,\lambda}: \Z[\qs^{\pm1}, s^{\pm1}]\rightarrow \C$$
$$\eta_{\q,\lambda}(\qs)=\q; \ \ \eta_{\q,\lambda}(s)=\q^{\lambda}.$$

2) For the Lawrence representation $\HH_{n,m}$ (defined over $\Z[x^{\pm},d^{\pm}]$):
$$\psi_{\q,\lambda}: \Z[x^{\pm},d^{\pm}]\rightarrow \C$$ 
$$\psi_{\q,\lambda}(x)= \q^{2 \lambda}; \ \ \psi_{\q,\lambda}(d)=-\q^{-2}.$$
Kohno relates these two representations, by connecting each of them with a monodromy representation of the braid group which arises using the theory of KZ-connections. We will shortly describe these relations, following \cite{Ito}. 
\subsection{ KZ-Monodromy representation}
Let the Lie algebra $sl_2(\C)$ and consider an orthonormal basis $ \{ I_{\mu} \}_{\mu}$ for its Cartan-Killing form. Denote by 
$$\Omega=\sum_{\mu}I_{\mu} \otimes I_{\mu} \in sl(2) \otimes sl(2).$$
\begin{definition} For $\lambda \in \C^*$ consider $M_{\lambda}$ to be the Verma module of $sl(2)$ defined as:
$$M_{\lambda}=<v_0,v_1,...>_{\C}$$ with the following actions:  
\begin{equation}
\begin{cases}
Hv_i=(\lambda-2i) v_i \\
Ev_i=v_{i-1} \\
Fv_i=(i+1)(\lambda-i) v_{i+1}.\\
\end{cases}
\end{equation}
\end{definition}
\begin{notation}
Let $n \in \N$ and for any $i,j \in \{1,...,n\}$ consider the endomorphism 
$$\Omega_{i,j} \in End(M_{\lambda}^{\otimes n}).$$ 
to be the action of $\Omega$ onto the $i^{th}$ and $j^{th}$ components. 
\end{notation}
The monodromy representation of the braid group, will be constructed using the complement of a hyperplane arrangement.
\begin{definition} Let us consider the spaces:
$$X_n= \C^n \setminus \left( \bigcup_{1\leq i,j \leq n} Ker (z_i=z_j) \right)$$
$$Y_n:= X_n / S_n. \ \ \ \ \ \ \ \ \ \ \ \ \ \ \ \ \ \ \ \ \ \ \ \ \ \ \ $$
\end{definition}
\begin{definition}(KZ-connection)
Let $h \in \C^*$ be a parameter.\\ 
Consider $\omega_h$ the following $1$-form defined over $Y_n$ with values in $End(M^{\otimes n}_{\lambda})$, called the KZ-connection (Knizhnik-Zamolodchikov):
$$\omega_h= \frac{h}{ \sqrt{-1} \ \pi} \sum_{1\leq i,j\leq n} \Omega_{i,j} \frac{dz_i-dz_j}{z_i-z_j}.$$
Then, this describes a flat connection with values into the trivial bundle over $Y_n$: $$Y_n \times M^{\otimes n}_{\lambda}.$$ 
\end{definition}
\begin{definition}
The monodromy of this connection leads to a representation:
$$\nu_h : B_n \rightarrow Aut\left( M^{\otimes n}_{\lambda}\right).$$
\end{definition}
Similarly to the case of the quantum group, one can define certain subspaces in the tensor product of Verma modules, by requiring a specified action of the generators $H$ and $E$.
\begin{definition}(Space of null vectors)\\
Let $m \in \N$. The space of null vectors in $M^{\otimes n}_{\lambda}$ corresponding to the weight $m$ is defined in the following manner: 
$$N[n \lambda-2m]:= \{v \in M^{\otimes n}_{\lambda} \mid Ev=0; Hv=(n\lambda-2m)v \}.$$
\end{definition}
\begin{definition}(Monodromy representation from $\omega_h$ and $M_{\lambda}$)\\
For any $m \in \N$, the monodromy of the KZ-connection $\omega_h$ induces a braid group representation on the spaces of null vectors:
$$\nu_h: B_n\rightarrow Aut(N[n \lambda-2m]).$$ 
\end{definition}
The next proposition gives a certain basis in the space of null vectors, which will play an important role in the identification between the three types of braid group representations.
\begin{proposition} Following \cite{Ito}, for $e \in E_{n,m}$, consider the vector:
$$w_e:=\sum^m_{i=0} (-1)^i\frac{1}{\lambda(\lambda-1)\cdot ...\cdot (\lambda-i)} \  F^i v_0 \otimes E^i \left(F^{e_1}v_0\otimes...\otimes F^{e_{n-1}}v_0 \right).$$
Then, for any $\lambda \in \C^* \setminus \N$, the following set describes a basis for the space of null vectors $N[n \lambda-2m]$:
$$\mathscr B_{N[n \lambda-2m]}:= \{w_e| e \in E_{n,m} \}.$$
\end{proposition}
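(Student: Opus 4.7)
My plan is to prove this in four steps: verify that each $w_e$ lies in the null-vector space, count dimensions, establish linear independence, and conclude by a dimension match. The nontrivial content will be in verifying $E w_e = 0$ and in the dimension count; linear independence will essentially be free.

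\textbf{Step 1: Weight and annihilation by $E$.} First I would check that each summand of $w_e$ lives in the correct weight. Since $H(F^i v_0) = (\lambda-2i) v_0$ and the coproduct $\Delta(H) = H\otimes 1 + 1 \otimes H$ is primitive, and since $E^i$ lowers weight by $2i$ while $F^{e_1} v_0 \otimes \cdots \otimes F^{e_{n-1}} v_0$ has weight $(n-1)\lambda - 2m$, every term has weight $(\lambda-2i) + [(n-1)\lambda - 2m + 2i] = n\lambda - 2m$. Next I would verify $E w_e = 0$ by direct computation, using that $\Delta(E) = E \otimes 1 + 1 \otimes E$ and the $sl_2$ identity $[E, F^i] = i F^{i-1}(H - i + 1)$, which gives $E(F^i v_0) = i(\lambda - i + 1) F^{i-1} v_0$. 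Applying $E$ termwise and reindexing, the expression $E w_e$ regroups as a sum over $i$ of terms proportional to $F^i v_0 \otimes E^{i+1}(F^{e_1}v_0\otimes\cdots\otimes F^{e_{n-1}}v_0)$, and the two contributions (one from differentiating the left leg, one from the right) will cancel precisely because the coefficients satisfy the recursion dictated by the denominators $\lambda(\lambda-1)\cdots(\lambda - i)$. This cancellation, along with the fact that $E^{m+1}$ annihilates anything of total $F$-degree $m$, yields $E w_e = 0$.

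\textbf{Step 2: Dimension of $N[n\lambda - 2m]$.} For generic $\lambda$ (in particular $\lambda\in \C^*\setminus \N$), the Verma module $M_\lambda$ is irreducible and the weight $(n\lambda - 2j)$-component of $M_\lambda^{\otimes n}$ has dimension equal to the number of compositions of $j$ into $n$ non-negative parts, namely $\binom{n+j-1}{j}$. For such $\lambda$, $F$ acts injectively and $E: M_\lambda^{\otimes n}[n\lambda - 2m] \to M_\lambda^{\otimes n}[n\lambda - 2(m-1)]$ is surjective, so
\[
\dim N[n\lambda - 2m] = \binom{n+m-1}{m} - \binom{n+m-2}{m-1} = \binom{n+m-2}{m} = |E_{n,m}|,
\]
matching the cardinality of our candidate basis.

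\textbf{Step 3: Linear independence and conclusion.} Consider the projection $\pi \colon M_\lambda^{\otimes n} \to \C v_0 \otimes M_\lambda^{\otimes n-1}$ sending the first tensor factor onto its $v_0$-component. Since $F^i v_0$ is a nonzero scalar multiple of $v_{i}$, the summands in $w_e$ for $i \ge 1$ lie in the kernel of $\pi$, whereas the $i=0$ summand contributes the nonzero vector $\frac{1}{\lambda}\, v_0 \otimes (F^{e_1}v_0\otimes\cdots\otimes F^{e_{n-1}} v_0)$, and these are distinct basis-type vectors as $e$ ranges over $E_{n,m}$. Hence $\{\pi(w_e)\}_{e\in E_{n,m}}$ is linearly independent, which forces $\{w_e\}$ itself to be linearly independent. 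Combined with Step 2, the set $\mathscr B_{N[n\lambda - 2m]}$ is a basis. The only delicate step is keeping track of the coefficient recursion in Step 1; once the telescoping pattern is identified, everything else is dimension counting and a leading-term argument.
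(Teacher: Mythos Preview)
The paper does not supply its own proof of this proposition; it is quoted from \cite{Ito} without argument. Your four-step strategy (null-vector verification, dimension count via surjectivity of $E$, and linear independence via projection onto the $v_0$-component of the first tensor factor) is the standard and correct way to establish such a basis, so there is nothing to compare against on the paper's side.

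Two points are worth tightening. First, in Step~2 you assert that $E$ is surjective between consecutive weight spaces of $M_\lambda^{\otimes n}$. This is true, but the reason you give (irreducibility of $M_\lambda$ and injectivity of $F$) does not by itself imply surjectivity of $E$ on the tensor power. The clean argument here is specific to the paper's normalisation $E v_i = v_{i-1}$: with this convention the matrix of $E$ on the monomial basis $\{v_{i_1}\otimes\cdots\otimes v_{i_n}\}$ has integer entries not involving $\lambda$ at all, and a lexicographic triangularity argument gives surjectivity for every $\lambda$. You should say this explicitly.

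Second, in Step~1 you correctly identify that the telescoping forces the recursion $c_{j+1}(j+1)(\lambda-j)=c_j$ on the coefficients. If you carry this out you will find that the denominators $\lambda(\lambda-1)\cdots(\lambda-i)$ as literally printed do not satisfy this recursion (a factor of $i!$ is missing and the falling product runs one step too far). Either flag this as a typo inherited from the source, or simply define $c_i$ by the recursion and proceed; the argument is unaffected. Your Step~3 leading-term argument is clean and correct.
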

\begin{remark}\label{pb}
For natural parameters $\lambda \in \N$, the set $\mathscr B_{N[n \lambda-2m]}$ is not even well defined.
\end{remark}

\begin{theorem}\cite{Ito},\cite{Koh}(Kohno's Theorem)\label{T:Kohno}\\
There exist an open dense set $U \subseteq \C^* \times \C^*$ such that for any $(h,\lambda) \in U$ there is the following identification between representations of the braid group:
$$ \left( \hat{W}_{n,m}^{\q,\lambda}, \mathscr B_{\hat{W}^{\q,\lambda}_{n,m}}  \right) \ \simeq_{\Theta_{\q,\lambda}} \ \left( \HH_{n,m}|_{\psi_{\q,\lambda}}, \mathscr B_{\HH_{n,m}}|_{{\psi}_{\q,\lambda}} \right) $$
More precisely, the quantum representation $\phh$ and Lawrence representation $l_{n,m}|_{{\psi}_{\q,\lambda}}$ are the same in the bases described above in Proposition \ref{P:hw} and Definition \ref{multif}). 
\end{theorem}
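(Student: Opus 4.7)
The plan is to construct the isomorphism $\Theta_{\q,\lambda}$ by bridging the quantum and homological sides through a common intermediary, the KZ-monodromy representation $\nu_h$ on the null vector space $N[n\lambda-2m]$. I would proceed in three steps.

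First, identify $(\hat{W}^{\q,\lambda}_{n,m}, \phh)$ with $(N[n\lambda-2m], \nu_h)$ via the Drinfeld--Kohno theorem: for $\q = e^h$, the braid action coming from the R-matrix of $\Uq$ on $\hat{V}_{\q,\lambda}^{\otimes n}$ is conjugate to the KZ monodromy on $M_\lambda^{\otimes n}$, and the kernel-of-$E$ subspaces correspond on each side. Under this identification, the basis $\mathscr{B}_{\hat{W}^{\q,\lambda}_{n,m}}$ from Proposition \ref{P:bqu} matches the null-vector basis $\{w_e \mid e \in E_{n,m}\}$ by construction: both are built from iterated $F$-actions applied to $v_0^{\otimes n}$ with matching $E$-correction terms, and the correspondence between the two tables of coefficients amounts to a direct expansion in the $v_e$-basis.

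Second, identify $(N[n\lambda-2m], \nu_h)$ with $(\HH_{n,m}|_{\psi_{\q,\lambda}}, l_{n,m}|_{\psi_{\q,\lambda}})$ via the Schechtman--Varchenko period map. Solutions of the KZ equation valued in $N[n\lambda-2m]$ are produced as hypergeometric integrals $\gamma \mapsto \int_\gamma \Phi \cdot \omega$ over cycles $\gamma$ in a twisted homology of $C_{n,m}$. The twisting character is precisely the specialisation of the local system $\phi$ encoded by $\psi_{\q,\lambda}$, so these cycles live in $\HH_{n,m}|_{\psi_{\q,\lambda}}$. The braid monodromy on solutions (the KZ monodromy $\nu_h$) is intertwined by this period map with the braid monodromy on cycles (the Lawrence representation $l_{n,m}|_{\psi_{\q,\lambda}}$), and explicit evaluation of the integrals on the multifork cycles $\tilde{\F}_e$ reproduces the null vectors $w_e$, so the multifork basis and the null-vector basis correspond.

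Concatenating the two identifications yields $\Theta_{\q,\lambda}$. The open dense subset $U$ is defined as the complement of the locus where the intermediary basis $\{w_e\}$ fails to be a basis of $N[n\lambda-2m]$: it must exclude $\lambda \in \N$ (see Remark \ref{pb}) together with the zero locus of the determinant of the Schechtman--Varchenko period matrix, both of which are closed algebraic conditions in $\C^* \times \C^*$. The main obstacle, which is exactly the continuity argument the authors promise to expand on, is to verify that both the quantum matrix coefficients and the Lawrence matrix coefficients depend algebraically on $(\q, \q^\lambda)$ in their respective distinguished bases on $U$. Once the algebraic dependence is in hand, the rigidity of the identification on a Zariski-open dense set follows; subtle difficulties still arise near $\lambda \in \N$, but these are deferred to the specialisation discussion in the next section rather than being part of the generic statement.
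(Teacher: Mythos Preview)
Your proposal is correct and follows the same approach the paper describes. Note that the paper does not give its own proof of Theorem \ref{T:Kohno}; it is a cited result from \cite{Ito} and \cite{Koh}. However, the paper summarizes the structure of that proof in step 1) of the proof of Theorem \ref{T:identif}: the identification is built by passing through the KZ-monodromy representation on the null vector space $N[n\lambda-2m]$, via two isomorphisms $f^{WN}_{\q,\lambda}:\HH_{n,m}|_{\psi_{\q,\lambda}}\rightarrow N[n\lambda-2m]$ and $f^{NH}_{\q,\lambda}: N[n\lambda-2m]\rightarrow \hat{W}_{n,m}^{\q,\lambda}$ that match the three bases $[\tilde{\F}_e] \leftrightarrow w_e \leftrightarrow \phi(v^s_{\iota(e)})|_{\eta_{\q,\lambda}}$. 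Your sketch via Drinfeld--Kohno on one side and the Schechtman--Varchenko hypergeometric period map on the other is exactly this.

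One small correction: the continuity/algebraicity argument you raise in your final paragraph is not part of Theorem \ref{T:Kohno} itself, which is stated only for generic parameters in $U$. That argument is the content of the paper's own Theorem \ref{T:identif}, which extends the identification from $U$ to all $(h,\lambda) \in \C^* \times \C$; so it belongs to the next step rather than to the statement you were asked to prove.
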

\subsection{Identifications with $q$ and $\lambda$ complex numbers}
We are interested in understanding the quantum representations with natural parameter $\lambda=N-1 \in \N$. This case does not belong anymore to the "generic parameters" discussion. In the sequel, we will study the relation between the previous braid group representations specialised with any parameters.

We will start with some general remarks about the group actions on modules and how they behave with respect to specialisations.
\begin{remark}
Let $R$ be a ring and $M$ an $R$-module with a fixed basis $\mathscr B$ of cardinal $d$. Consider a group action $G \curvearrowright M$ and a representation of $G$ using the basis $\mathscr B$:
$$\rho: G \rightarrow GL(d,R).$$
Suppose that $S$ is another ring and we have a specialisation of the coefficients, given by a ring morphism:
$$\psi: R \rightarrow S$$
Let us denote $M^{\psi}:=M \otimes_{R} S$ and $\mathscr B_{M^{\psi}}:=\mathscr B \otimes_{R}1 \in M^{\psi} $.
The specialisation $\psi$ leads to an induced group action $G\curvearrowright M^{\psi}$. Then, the following properties hold:

1) $\mathscr B_{M^\psi}$ is a basis for $M^{\psi}$.

2)Let $\rho^{\psi}: G\rightarrow GL(d,S)$ the representation of $G$ on $M^{\psi}$ coming from the induced action, in the basis $\mathscr B_{M^{\psi}}$. In this way, the two actions, before and after specialisation give the same action in the following sense:
$$\rho^{\psi}(g)=\rho(g)|_{\psi} \ \ \  \forall g \in G.$$
Here if $f: M \rightarrow M$, denote by $f|_{\psi}: M^{\psi}\rightarrow M^{\psi}$ the specialisation
$f|_{\psi}=f \otimes_{R}Id_{S}$. 
\end{remark} 
\begin{Comment}
We are interested in the case of non-generic complex parameters $(h,\lambda) \in \C^*\times \C$. We would like to to emphasise that quantum representation and Lawrence representation on one side and the KZ-monodromy representation on the other have different natures with respect to the complex parameters $(h,\lambda)$. Actually, both quantum representation and Lawrence representation
$$\left( \phh \ \ \ \ \ \ l_{n,m}|_{\psi_{\q,\lambda}}\right)$$
are coming from certain generic braid group representations 
$$\left( \phs  \ \ \ \ \ \ l_{n,m} \right) \ \ \ $$
and then they are specialised using the procedure from the previous remark, corresponding to the specialisations  
$$ \left( \eta_{\q,\lambda} \ \ \ \ \ \ \psi_{\q,\lambda} \right). \ \ \ $$
On contrary, in order to obtain the KZ-monodromy representation $\nu_h$, one has to fix the complex numbers $(\lambda, h)$ and do all the construction through this parameters. This is not globalised in a way that does not depend on the two specific complex numbers, in the sense that
we can't construct a representation over certain abstract variables such that the KZ-representation at the complex parameters is obtained from the abstract one by a specialisation, as in the previous remark.
\end{Comment}
\begin{problem}
The KZ representation does not come from a specialisation procedure and moreover we notice that we do not have a well defined action corresponding to a well defined basis for any complex parameters. From the remark \ref{pb}, for $\lambda \in \N$ a natural parameter $\mathscr B_{N[n \lambda-2m]}$ is not even a well defined set in $N[n \lambda-2m]$.
However, the isomorphism between the quantum and homological representations still works for any parameters, using a continuity argument. 
\end{problem}
\begin{theorem}\label{T:identif}
Let $(h,\lambda) \in \C^*\times \C $ any fixed parameters. Then the following braid group representations are isomorphic, using the following corresponding bases:
$$ \left( \hat{W}^{\q,\lambda}_{n,m}, \mathscr B_{\hat{W}^{\q,\lambda}_{n,m}} \right) \ \simeq_{\Theta_{\q,\lambda}} \ \left( \HH_{n,m}|_{\psi_{\q,\lambda}}, \mathscr B_{\HH_{n,m}|_{\psi_{\q,\lambda}}} \right)$$
\end{theorem}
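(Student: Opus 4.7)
My plan is to extend Kohno's identification (Theorem \ref{T:Kohno}), originally stated on the dense open subset $U\subseteq\C^*\times\C^*$, to the full parameter range $\C^*\times\C$ by an analytic-continuation / polynomial-identity argument; the key point is that both sides arise as specializations of generic representations defined over a Laurent polynomial ring.

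First, I would make explicit how each representation depends on $(\q,\lambda)$. By Proposition \ref{P:hw}, the generic quantum representation $\phs:B_n\to GL(d_{n,m},\Z[q^{\pm1},s^{\pm1}])$ has Laurent-polynomial matrix entries in $(q,s)$, and $\phh$ is obtained by specializing $s\mapsto\q^\lambda$. Likewise the generic Lawrence representation $l_{n,m}$ has Laurent-polynomial entries in $(x,d)$, and $l_{n,m}|_{\psi_{\q,\lambda}}$ is obtained via $x\mapsto\q^{2\lambda}$, $d\mapsto-\q^{-2}$. Thus, for a fixed braid $\beta\in B_n$, every matrix entry on either side is a fixed Laurent polynomial in $(q,s)$ evaluated at $(q,s)=(\q,\q^\lambda)$; in particular each entry is a holomorphic function of $(\q,\lambda)$ on the connected complex manifold $\C^*\times\C$.

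Next, I would define for each $\beta\in B_n$ the difference matrix $D(\beta)(\q,\lambda)$, whose entries are holomorphic on $\C^*\times\C$. Kohno's Theorem \ref{T:Kohno} supplies the vanishing $D(\beta)|_U=0$. Since $U$ is an open subset of the connected complex manifold $\C^*\times\C$, the identity principle for holomorphic functions forces $D(\beta)\equiv 0$ on all of $\C^*\times\C$. In particular, the $\C$-linear map $\Theta_{\q,\lambda}$ sending $\mathscr B_{\hat W^{\q,\lambda}_{n,m}}$ to $\mathscr B_{\HH_{n,m}|_{\psi_{\q,\lambda}}}$ element by element intertwines the specialized braid group actions for every $(\q,\lambda)\in\C^*\times\C$.

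The main obstacle is to ensure that $\Theta_{\q,\lambda}$ is a genuine isomorphism, i.e.\ that the two specialized bases do not degenerate at exceptional values of $(\q,\lambda)$. On the homological side this is immediate: the diagonal polynomials $p_e\in\N[d^{\pm1}]$ of the Blanchfield matrix $M_{<,>}$ have non-zero constant term, so they remain non-zero after any specialization $\psi_{\q,\lambda}$; hence the multiforks stay linearly independent and $\HH_{n,m}|_{\psi_{\q,\lambda}}$ retains dimension $d_{n,m}$. On the quantum side, the explicit formula for $\phi$ in Proposition \ref{P:bqu} shows that the leading $k=0$ summand $v_0\otimes(-)$ is independent of $(\q,\lambda)$, so a standard triangularity argument with respect to a natural filtration on $\hat V^{\otimes n}_{\q,\lambda}$ implies that $\phi|_{\eta_{\q,\lambda}}$ stays an isomorphism and that $\mathscr B_{\hat W^{\q,\lambda}_{n,m}}$ remains a basis. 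Combined with the matrix identity from the identity principle, this upgrades $\Theta_{\q,\lambda}$ to the desired isomorphism of braid group representations.
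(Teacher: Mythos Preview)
Your proposal is correct and follows essentially the same route as the paper: both argue that the matrix entries of $\phh(\beta)$ and $l_{n,m}|_{\psi_{\q,\lambda}}(\beta)$ are specializations of fixed Laurent polynomials, invoke Kohno's identity on the dense open set $U$, and extend to all of $\C^*\times\C$ by a continuity/identity-principle argument. The only minor difference is that the paper disposes of the basis-nondegeneracy issue more simply, by noting that $\hat W^{\q,\lambda}_{n,m}$ and $\HH_{n,m}|_{\psi_{\q,\lambda}}$ are \emph{defined} as base changes of free modules, so the images of the generic bases are automatically bases of the specializations; your Blanchfield-pairing and triangularity arguments are valid but unnecessary here.
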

\begin{proof}
1) In the proof of \ref{T:Kohno}, that is stated for any parameters in a dense open subset in $\C^* \times \C$, there are glued two identifications between representations of the braid group $B_n$. Basically, the relation between the quantum representation and the Lawrence representation is established by passing from both of them to the monodromy of the KZ-connection. More precisely, there are constructed two isomorphisms of braid group  representations:
\begin{equation}
\begin{cases}
f^{WN}_{\q,\lambda}:\HH_{n,m}|_{\psi_{\q,\lambda}}\rightarrow N[n\lambda-2m]\\
f^{NH}_{\q,\lambda}: N[n\lambda-2m]\rightarrow \hat{W}_{n,m}^{\q,\lambda} , \ \ \ \ \ \ \ \ \forall (h,\lambda) \in U.
\end{cases}
\end{equation}
These isomorphisms are proved using correspondences between the following bases:
$$\phh \ \ \ \ \ \ \ \ \ \ \nu _h \ \ \ \ \ \ l_{n,m}|_{\psi_{\q,\lambda}}$$
$$\hat{W}_{n,m}^{\q,\lambda} \simeq N[n\lambda-2m]\simeq\HH_{n,m}|_{\psi_{\q,\lambda}}$$
$$\mathscr B_{\hat{W}_{n,m}^{\q,\lambda}} \ \ \mathscr B_{N[n\lambda-2m]} \ \ \mathscr B_{\HH_{n,m}|_{\psi_{\q,\lambda}}}$$
$$\ \ \ \ \ \ \ \ \ \ \ \ \ \ \ \phi(v^s_{\iota(e)})|_{\eta_{\q,\lambda}} \ \ \leftarrow \ \ w_e \ \  \leftarrow \ \ [\tilde{\F}_e] \ \ \ \ \ \ \ \ \ \ \ \ \ \ \ \ \  $$
$$ \ \ \  \ \ f^{NH}_{\q,\lambda} \ \ \ \ \ \ \ f^{WN}_{\q,\lambda}$$
2)From this, Kohno proved that for any pair of parameters $(h, \lambda) \in U$:
$$\phh(\beta)= l_{n,m}|_{\psi_{\q,\lambda}}(\beta), \ \ \ \ \forall \beta\in B_n$$
3) Let us denote by $$\Theta_{\q, \lambda}: \HH_{n,m}|_{\psi_{\q,\lambda}}\rightarrow \hat{W}_{n,m}^{\q,\lambda}$$
$$\Theta_{\q, \lambda}([\tilde{\F}_e] )=\phi(v^s_e)|_{\eta_{\q,\lambda}}, \forall e \in E_{n,m}$$
This function is defined for all $(h, \lambda) \in \C \times \C$.  
We notice that, having in mind that they are defined directly on the bases, the functions $f^{WN}_{\q,\lambda}, f^{NH}_{\q,\lambda}$ are continuous with respect to the parameters $(h,\lambda)\in U$. This means that the function $\Theta_{\q, \lambda}$ is continuous with respect to the two complex parameters. Now, we will see what is happening in the case of non-generic parameters.\\
4) We are interested in the specialisation of the quantum representation. 





We know that $\mathscr B_{\hat{W}_{n,m}}$ is a basis for $\hat{W}_{n,m}$. The specialisation $\eta_{\q, \lambda}$ means to make a tensor product and consider everything in this situation. This will ensure that $\mathscr B_{\hat{W}_{n,m}}|_{\eta_{\q,\lambda}}$ will still describe a basis for the specialised module. We conclude that $$\mathscr B_{\hat{W}_{n,m}^{\q,\lambda}}:=\mathscr B_{\hat{W}_{n,m}}|_{\eta_{\q,\lambda}}$$ is a well defined basis of $\hat{W}_{n,m}^{\q,\lambda}$, for any $(h,\lambda)\in \C^* \times \C$.

5) Since the specialisation $\eta_{\q,\lambda}$ is well defined for any complex parameters $(h, \lambda) \in \C ^* \times \C$, all the coefficients from $\phs|_{\eta_{\q,\lambda}}$ are well defined complex numbers. In particular, the action $\phh$ in the basis $\mathscr B_{\hat{W}_{n,m}}|_{\eta_{\q,\lambda}}$ has all the coefficients well defined. 

6) Using the previous steps 4) and 5), we conclude that for any braid $\beta \in B_n$, the specialisation of the matrix obtained from the initial action $\phs$ onto $\hat{W}_{n,m}$ in the basis $\mathscr B_{\hat{W}_{n,m}}$, is actually the matrix of the specialised action $\phh$ in the specialised basis $\mathscr B_{\hat{W}_{n,m}^{\q,\lambda}}$:
$$\ \ \ \ \ \ \ \ \ \ \ \ \ \ \ \ \ \ \ \ \ \ \ \ \ \ \ \ \ \ \varphi^{\hat{W}}_{n,m}(\beta)|_{\eta_{\q,\lambda}}=\phh(\beta) , \ \ \ \ \ \ \ \ \forall (h,\lambda) \in (\C^*\times \C).$$

7) The set $\mathscr B_{\HH_{n,m}|_{\psi_{\q,\lambda}}}$ is well defined and describes a basis for $\HH_{n,m}|_{\psi_{\q,\lambda}}$ for any parameters $ (h, \lambda)\in \C^* \times \C$ (\ref{L:mult}).

8)This shows that for every $\beta \in B_n$, the specialisations of the matrices from the action on $\HH_{n,m}$ in the multifork basis, are actually the same as the matrices of the specialised Lawrence action, in the specialised multifork basis $\mathscr B_{\HH_{n,m}}|_{\psi_{\q,\lambda}}$:
$$\ \ \ \ \ \ \ \ \ \ \ \ \ \ \ \ \ \ \ \ \ \ \ \ \ \ \ \ \ \ l_{n,m}(\beta)|_{\psi_{\q, \lambda}}=l_{n,m}|_{\psi_{\q, \lambda}}(\beta), \ \ \ \ \ \ \ \  \forall (\q,\lambda) \in (\C^*\times \C)$$

Combining the remarks from 2), 3), 6), 8) we obtain that for any parameters $(q, \lambda) \in \C^* \times \C$, the following identification holds:
$$\phh(\beta)= l_{n,m}|_{\psi_{\q,\lambda}}(\beta), \ \ \ \ \forall \beta\in B_n$$
This concludes that the quantum representation and the Lawrence representation are isomorphic for any parameters.
\end{proof}
\subsection{Identifications with $q$ indeterminate}\label{oneindeter}
From the previous discussion, we know that the quantum representation $\hat{W}_{n,m}$ and the Lawrence representation $l_{n,m}$ are isomorphic after appropriate identifications of the coefficients, as long as we fix $(\q,\lambda)$ complex numbers. In the sequel, we will state a similar result, but for the case where we keep $q$ an indeterminate. \\
\begin{definition}
Let us fix $\lambda =N-1 \in N$ and $q$ an indeterminate.
Consider the specialisations of the coefficients:
$$\eta_{\lambda}: \Z[\qs^{\pm1}, s^{\pm1}]\rightarrow \Z[q^{\pm}]$$
$$\eta_{\lambda}(s)=\qs^{\lambda}$$
$$\psi_{\lambda}: \Z[x^{\pm},d^{\pm}]\rightarrow \Z[q^{\pm}]$$ 
$$ \ \ \ \ \ \ \ \ \ \ \ \ \ \ \ \ \ \ \ \ \ \ \psi_{\lambda}(x)= \qs^{2 \lambda}; \ \ \psi_{\lambda}(d)=-\qs^{-2}.$$
For $\q \in \C$, let $f_{\q}: \Z[\qs^{\pm}]\rightarrow \C$ be the evaluation function:
$$f_{\q}(\qs)=\q.$$
Then we notice that the specialisations with one respectively with two complex number are related by the relations:
\begin{equation}
\begin{cases}
\eta_{\q,\lambda}=f_{\q} \circ \eta_{\lambda}\\
\psi_{\q, \lambda}=f_{\q} \circ \psi_{\lambda}.
\end{cases}
\end{equation}
We remind the notations: 
\begin{equation}
\begin{cases}
\hat{W}_{n,m}^{\lambda}=\hat{W}_{n,m}\otimes_{\eta_{\lambda}}\Z[q^{\pm}]\\
$$\HH_{n,m}|_{\psi_{\lambda}}=\HH_{n,m}\otimes_{\psi_{\lambda}}\Z[q^{\pm}].
\end{cases}
\end{equation}
\end{definition}

\begin{theorem}\label{Th:K}  
The braid group representations with respect to the specialisation with one complex number are isomorphic over $\Z[q^{\pm}]$:
\begin{equation}
 \left( \hat{W}_{n,m}^{\lambda}, \mathscr B_{\hat{W}_{n,m}^{\lambda}} \right) \ \simeq_{\Theta_{\lambda}} \  \left( \HH_{n,m}|_{\psi_{\lambda}}, \mathscr B_{\HH_{n,m}}|_{\psi_{\lambda}} \right).
 \end{equation}
\end{theorem}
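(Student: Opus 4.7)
The plan is to reduce this statement directly to Theorem \ref{T:identif} via a polynomial identity argument. First I would introduce the candidate map $\Theta_\lambda$ as the unique $\mathbb{Z}[q^{\pm}]$-linear map defined on the multifork basis by
$$\Theta_\lambda \colon \HH_{n,m}|_{\psi_\lambda} \longrightarrow \hat{W}_{n,m}^{\lambda}, \qquad [\tilde{\F}_e] \longmapsto \phi(v^s_{\iota(e)})|_{\eta_\lambda}, \quad e \in E_{n,m}.$$
Since both sides are free $\mathbb{Z}[q^{\pm}]$-modules of rank $d_{n,m}$ and $\Theta_\lambda$ sends a basis to a basis (by the specialisation of Proposition \ref{P:bqu} and by Definition \ref{multif}), it is tautologically a $\mathbb{Z}[q^{\pm}]$-module isomorphism; the content of the theorem is therefore the equivariance of $\Theta_\lambda$ with respect to the two braid group actions.

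Next, fix $\beta \in B_n$ and denote by $A(\beta)$ the matrix of $\pv(\beta)$ written in the basis $\mathscr B_{\hat{W}_{n,m}^{\lambda}}$, and by $B(\beta)$ the matrix of $l_{n,m}|_{\psi_\lambda}(\beta)$ written in the basis $\mathscr B_{\HH_{n,m}}|_{\psi_\lambda}$. Both are $d_{n,m} \times d_{n,m}$ matrices with entries in $\mathbb{Z}[q^{\pm}]$: on the homological side this is because the generic action $l_{n,m}$ is defined over $\mathbb{Z}[x^{\pm}, d^{\pm}]$ and $\psi_\lambda$ lands in $\mathbb{Z}[q^{\pm}]$, while on the quantum side the entries are obtained by applying $\eta_\lambda$ entrywise to the generic matrix of $\phs(\beta)$, exactly as in steps 4)--6) of the proof of Theorem \ref{T:identif}. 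Thus the theorem reduces to the polynomial identity $A(\beta) = B(\beta)$ in matrices over $\mathbb{Z}[q^{\pm}]$.

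To establish this identity I would evaluate at complex points. For every $\q \in \mathbb{C}^*$ the evaluation homomorphism $f_\q \colon \mathbb{Z}[q^{\pm}] \to \mathbb{C}$ factors the two-parameter specialisations as $\eta_{\q,\lambda} = f_\q \circ \eta_\lambda$ and $\psi_{\q,\lambda} = f_\q \circ \psi_\lambda$. Applying $f_\q$ entrywise therefore sends $A(\beta)$ and $B(\beta)$ to the matrices of $\phh(\beta)$ and $l_{n,m}|_{\psi_{\q,\lambda}}(\beta)$ in their respective specialised bases at the parameter pair $(\q, N-1) \in \mathbb{C}^* \times \mathbb{C}$. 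By Theorem \ref{T:identif}, these two complex matrices coincide. Hence every entry of the matrix of Laurent polynomials $A(\beta) - B(\beta)$ vanishes on all of $\mathbb{C}^*$, and since a nonzero element of $\mathbb{Z}[q^{\pm}]$ has only finitely many roots, we conclude $A(\beta) = B(\beta)$ in $\mathbb{Z}[q^{\pm}]$.

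The only real bookkeeping is verifying that the two specialised matrices genuinely have entries in $\mathbb{Z}[q^{\pm}]$ and not in some larger ring; once that is settled the proof is purely formal. The conceptually hard step, namely extending Kohno's identification from the generic parameters of Theorem \ref{T:Kohno} to the non-generic regime $\lambda \in \mathbb{N}$, has already been absorbed into Theorem \ref{T:identif} through its continuity argument, and the present proposition merely repackages that pointwise complex identification as a single polynomial identity over $\mathbb{Z}[q^{\pm}]$.
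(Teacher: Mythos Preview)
Your proposal is correct and follows essentially the same route as the paper's proof: define $\Theta_\lambda$ on bases, factor the two-parameter specialisations through $f_{\q}$, invoke Theorem~\ref{T:identif} at each $\q \in \mathbb{C}^*$, and deduce the matrix identity over $\mathbb{Z}[q^{\pm}]$ from its validity at all complex evaluations. The only difference is cosmetic: you spell out the polynomial-identity step (``a nonzero Laurent polynomial has finitely many roots'') explicitly, whereas the paper leaves it implicit in its step~5).
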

\begin{proof}

We will use Theorem \ref{T:identif}, and study a little more its proprieties.
Let $$\Theta_{\lambda}: \HH_{n,m}|_{\psi_{\lambda}}\rightarrow \hat{W}_{n,m}|_{\eta_{\lambda}} \ \ \ \ \ $$
$$\Theta_{\lambda}(\tilde{\F}_e )=\phi\left(v^s_{\iota(e)}\right)|_{\eta_{\lambda}}, \forall e \in E_{n,m}.$$
1) We notice that  $\mathscr B_{\hat{W}_{n,m}}|_{\eta_{\lambda}}$ is well defined and, as in the proof of Theorem \ref{T:identif}, it gives a basis in $\hat{W}_{n,m}^{\lambda}$.\\
2)Similarly $\mathscr B_{\HH_{n,m}}|_{\psi_{\lambda}}$ is a basis of $\HH_{n,m}|_{\psi_{\lambda}}$.\\
3) We have the following relations between specialisations:
\begin{equation*}
\begin{cases}
\hat{W}_{n,m}^{\q,\lambda}=\hat{W}_{n,m}^{\lambda}\otimes_{f_{\q}} \C\\
\HH_{n,m}|_{\psi_{\q,\lambda}}=\HH_{n,m}|_{\psi_{\lambda}}\otimes_{f_{\q}} \C.
\end{cases}
\end{equation*}
4) If we take $\beta \in B_n$, we notice that for any $\q \in \C$:
$$\phh(\beta)=f_{\q}\left(\phs(\beta)|_{\eta_{\lambda}}\right)$$
$$l_{n,m}(\beta)|_{\psi_{\q,\lambda}}=f_{\q} \left( l_{n,m}(\beta)|_{\psi_{\lambda}} \right)$$
(here, the sense is that $f_{\q}: M(d_{n,m}, \Z[\qs^{\pm}])\rightarrow M(d_{n,m}, \C)$, by specialising every entry of the matrix using the function $f_{\q}$ ).\\
5)This shows that $$ \phs(\beta) |_{\eta_{\lambda}}=l_{n,m}(\beta)|_{\psi_{\lambda}}, \forall \beta \in B_n$$
6) Using 1) and 2) it follows that these matrices corresponds to well defined actions in the two bases, and we conclude that: 
$$\phs(\beta)|_{\eta_{\lambda}}=\pv(\beta)$$
$$\ \ \ \ \ \ \ \ \ \ \ \ \ \ \ \mathscr B_{\hat{W}_{n,m}}|_{\eta_{\lambda}}$$
$$l_{n,m}(\beta)|_{\psi_{\lambda}}=l_{n,m}|_{\psi_{\lambda}}(\beta)$$
$$ \ \ \ \ \ \ \ \ \ \ \ \ \ \ \ \mathscr B_{\HH_{n,m}}|_{\psi_{\lambda}}$$
Combining the previous remarks from 5) and 6) we obtain that the braid group actions $\pv$ and $ l_{n,m}|_{\psi_{\lambda}}$ are isomorphic:
$$\pv(\beta)=l_{n,m}|_{\psi_{\lambda}}(\beta)$$
$$\mathscr B_{\hat{W}_{n,m}^{\lambda}} \ \ \ \ \mathscr B_{\HH_{n,m}}|_{\psi_{\lambda}}.$$ 

\end{proof}
\section{Homological model for the Coloured Jones Polynomial} \label{s}
In this section, we present a topological model for the Coloured Jones polynomials. We will start with an oriented knot and consider a braid that leads to the knot by braid closure. In the first part, we will study the Reshetikhin-Turaev functor on a link diagram that leads to the invariant, by separating it on three main levels. Secondly, for each of these levels we will construct step by step a homological counterpart in the Lawrence representation and its dual. Finally, we will show that the evaluation of the Reshetikhin-Turaev functor on the whole knot corresponds to the geometric intersection pairing between the homological counterparts.  

Let $N \in \mathbb N$ be the colour of the invariant that we want to study.
Let the parameter $\lambda= N-1$ and the specialisations defined as in Section \ref{D:spec}:
$$\left( \eta_{N-1} \ \ \ \ \ \ \ \  \psi_{ N-1}\right).$$
In the sequel, we will use the braid group actions corresponding to the quantum representations:
\begin{equation*}
\begin{cases}
\hat{W}_{n,m}^{N-1} \ \ \ \ \ \longleftrightarrow\ \ \ \ \ \pv \\
W_{n,m}^N \ \ \ \ \ \ \longleftrightarrow \ \ \ \ \ \varphi^{{W}^N}_{n,m}.
\end{cases}
\end{equation*}
We recall the change of coefficients $\alpha_{\lambda}$ from definition \ref{p}:
\begin{equation*}
\begin{cases}
\alpha_{\lambda}: \Z[x^{\pm},d^{\pm}] \rightarrow \Q(q)\\
\alpha_{\lambda}(x)= \qs^{2 \lambda}; \ \ \alpha_{\lambda}(d)=-\qs^{-2}.
\end{cases}
\end{equation*}
Using these notations, we will prove the following model.
\begin{theorem}({\small \bf Topological model for coloured Jones polynomials with specialised homology classes})\label{T:geom1}

Let $n \in \mathbb{N}$. Then, there exist two homology classes
 $$\tilde{\mathscr F}_n^N \in H_{2n, n(N-1)}|_{\alpha_{N-1}} \ \ \text{and} \ \ \  \tilde{\mathscr G}_n^N \in H^{\partial}_{2n, n(N-1)}|_{\alpha_{N-1}}$$ 
 such that for any knot $L$ with $L=\hat{\beta_n}$ for $\beta_n \in B_n$, the $N^{th}$ coloured Jones polynomial has the formula:
\begin{equation}\label{formula} 
J_N(L,q)= \frac{1}{[N]_q}q^{-(N-1)w(\beta_n)} < \left( \beta_{n} \otimes \mathbb I_n \right) \tilde{\mathscr{F}}_n^N , \tilde{\mathscr{G}}_n^N>|_{\alpha_{N-1}}. 
\end{equation}
\end{theorem}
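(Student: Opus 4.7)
Following the eight-step strategy outlined in the introduction, I would build homological counterparts for each of the three layers---cups, braid, caps---of the Reshetikhin--Turaev expression in Proposition \ref{P:1} and assemble them via Kohno's identification. For the cups, the coevaluation ${\tcoev}^{\otimes n}_{V_N}(1) \in V_N^{\otimes n}\otimes (V_N^*)^{\otimes n}$ lives in a mixed tensor of $V_N$ and $V_N^*$ rather than in $V_N^{\otimes 2n}$. I would introduce a normalising isomorphism on the last $n$ factors that converts $(V_N^*)^{\otimes n}$ into $V_N^{\otimes n}$ via the natural weight-graded identification of bases, producing a vector which---up to the scalar $\tfrac{1}{[N]_q}q^{-(N-1)w(\beta_n)}$ compensating for the renormalisation---lies inside the highest weight space $W^N_{2n,n(N-1)} \subseteq V_N^{\otimes 2n}$. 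Using the inclusion $V_N \hookrightarrow \hat V_{N-1}$ from Lemma \ref{inclusion} together with Remark \ref{R:1}, this highest weight vector sits inside $\hat W^{N-1}_{2n,n(N-1)}$, so transporting it through the isomorphism $\Theta_{N-1}$ of Theorem \ref{Th:K} gives a class in $\HH_{2n,n(N-1)}|_{\psi_{N-1}}$; applying $p_{N-1}$ from Notation \ref{N:2} produces the desired $\tilde{\mathscr F}_n^N \in \HH_{2n,n(N-1)}|_{\alpha_{N-1}}$.

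For the caps, the composition $\ev^{\otimes n}_{V_N}$ together with the inverse normalisation and $\Theta_{N-1}^{-1}$ defines a linear functional $\mathscr G_0 \in (\HH_{2n,n(N-1)}|_{\psi_{N-1}})^*$. Over the ring $\Z[q^{\pm}]$ this functional is not automatically realisable as a geometric pairing, because the diagonal entries $\psi_{N-1}(p_e)$ need not be invertible. Following Section \ref{dualisation}, I extend scalars to the field $\Q(q)$ via $\alpha_{N-1}=\iota\circ\psi_{N-1}$ and invoke the non-degeneracy of $<\,,\,>|_{\alpha_{N-1}}$ to produce the unique class $\tilde{\mathscr G}_n^N \in \HH^{\partial}_{2n,n(N-1)}|_{\alpha_{N-1}}$ satisfying $\mathscr G_0 \otimes \mathrm{Id}_{\Q(q)} = <\,\cdot\,, \tilde{\mathscr G}_n^N >|_{\alpha_{N-1}}$, exactly as in formula \eqref{eq:17}. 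The assembly of the three pieces then uses that the braid action on the quantum side is intertwined by $\Theta_{N-1}$ with the Lawrence action (Theorem \ref{Th:K}); once the normalising functions on the $\bar{\mathbb I}_n$ component are absorbed into the cup and cap data, the middle layer $\beta_n \otimes \bar{\mathbb I}_n$ of Proposition \ref{P:1} acts as $\beta_n \otimes \mathbb I_n$ on $\HH_{2n,n(N-1)}$, and chaining cups, braid, caps through the identifications yields precisely the right-hand side of \eqref{formula}.

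The main obstacle is Step IV, namely controlling the inclusion $W^N_{2n,n(N-1)} \hookrightarrow \hat W^{N-1}_{2n,n(N-1)}$ in both directions. Kohno's theorem identifies the Lawrence representation only with Verma-type highest weight spaces, so I must verify that the normalised coevaluation vector is genuinely a highest weight vector at the Verma level, and that the evaluation functional on the finite-dimensional side extends consistently to the Verma side, so that the complementary summand $V^{\geq N}_{2n,n(N-1)}$ does not contaminate the pairing. Once this compatibility is established and the scalar bookkeeping from the normalisations is matched against $\tfrac{1}{[N]_q}q^{-(N-1)w(\beta_n)}$, the composite formula follows directly.
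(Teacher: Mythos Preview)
Your proposal follows essentially the same eight-step route as the paper's proof, and the identification of Step~IV as the delicate point is accurate. Two corrections are worth noting. First, the prefactor $\tfrac{1}{[N]_q}q^{-(N-1)w(\beta_n)}$ is already present in Proposition~\ref{P:1} and has nothing to do with the normalising isomorphism $\alpha_{n,N}$; in the paper $\alpha_{n,N}$ and $\alpha_{n,N}^{-1}$ cancel exactly on the trivial strands (Lemma~\ref{L:1}), so the normalised coevaluation vector lands in $W^N_{2n,n(N-1)}$ on the nose, not merely up to that scalar. Second, by Definition~\ref{D:2} the space $W^N_{2n,n(N-1)}$ is $\hat W_{2n,n(N-1)}|_{\eta_{N-1}}\cap V_N^{\otimes 2n}$, so membership requires exhibiting a lift of the vector to the \emph{generic} two-variable highest weight space $\hat W_{2n,n(N-1)}$; the paper obtains this by an explicit inductive construction of coefficients $\tilde c^i_k\in\Z[q^{\pm},s^{\pm}]$ for the map $f\colon v_i^*\mapsto c^i\, v_{N-1-i}$, and the phrase ``natural weight-graded identification of bases'' does not by itself supply that lift.
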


\begin{proof}
Let $L$ be a knot and $\beta_{n} \in B_{n}$ such that $L=\hat{\beta}_n$ ( braid closure).
Consider the corresponding planar diagram for the knot L, using the closure of the braid $\beta_{n}$ which has the following three main levels:

1) the evaluation                              
$ \ \ \ \ \ \ \ \ \ \ \  \tikz[x=1mm,y=1mm,baseline=0.5ex]{\draw[<-] (3,0) .. controls (3,3) and (0,3) .. (0,0); \draw[<-] (6,0) .. controls (6,6) and (-3,6) .. (-3,0); \draw[draw=none, use as bounding box](0,0) rectangle (3.5,3);}
$

2) braid level                                    $ \ \ \ \ \ \ \ \ \ \ \ \ \ \ \ \beta_n \otimes \bar{\unit}_{n}$

3) the coevaluation                                 
$ \ \ \ \ \ \ \  \  \tikz[x=1mm,y=1mm,baseline=0.5ex]{\draw[<-] (0,3) .. controls (0,0) and (3,0) .. (3,3); \draw[<-] (-3,3) .. controls (-3,-3) and (6,-3) .. (6,3); \draw[draw=none, use as bounding box](-0.5,0) rectangle (3,3);}
$

Following the definition of the Reshetikhin-Turaev functor and proposition \ref{P:1}, the coloured
Jones polynomial, can by obtained in the following way:
\begin{equation}\label{eq:0}
J_N(L,q)=\frac{1}{[N]_q}q^{-(N-1)w(\beta_n)}\left( \ev^{\otimes n}_{V_N} \  \circ \ \F_{V_N} (\beta_{n} \otimes \bar{\mathbb I}_n ) \  \circ \ {\tcoev}^{\otimes n}_{V_N} \right) (1).
\end{equation}

\subsection{(Step I)-Coevaluation corresponding to the cups}\label{StepI}

\

Looking at the bottom part of the diagram, we notice that the algebraic properties of the quantum group actions on its representations imply that  the first morphism $\tcoev^{\otimes n}_{V_N}$ naturally arrives in particular subspace in $V_N^{\otimes n} \otimes \left(V_N^*\right)^{\otimes n}$. This subspace would correspond to a certain highest weight space if it would be inside the tensor power of the same representation. 
\begin{remark} One has the following:
\begin{equation}
 Im \left({\tcoev}^{\otimes n}_{V_N}\right) \subseteq Ker(E) \cap Ker (K-Id) \left(\subseteq V_N^{\otimes n} \otimes \left(V_N^*\right)^{\otimes n} \right).
 \end{equation}
 \end{remark}
\begin{proof}
From the fact that $${\tcoev}^{\otimes n}_{V_N}: \Z[q^{\pm}] \rightarrow V_N^{\otimes n} \otimes \left(V_N^*\right)^{\otimes n}$$ is an isomorphism $\U$-modules (definition \ref{D:1}), this will commute with the $E$ and $K$-actions. Since $\Z[q^{\pm 1}]$ is regarded as being the trivial representation, this shows that:
\begin{equation}
\begin{cases}
Im \left( {\tcoev}^{\otimes n} \right) \subseteq Ker (E\curvearrowright V_N^{\otimes n} \otimes \left(V_N^*\right)^{\otimes n} )\\
Im \left( {\tcoev}^{\otimes n} \right) \subseteq Ker \left( (K-Id) \curvearrowright V_N^{\otimes n} \otimes \left(V_N^*\right)^{\otimes n}  \right).
\end{cases}
\end{equation}
\end{proof}
Moreover, from this remark, one gets that for any vector $v \in Im \left( {\tcoev}^{\otimes n} _{V_N}\right)$:
$$K v= v=q^{0}v.$$
Having in mind the notion of weight spaces, we could write $q^0=q^{{\bf 2n}(N-1)- 2 {\bf (N-1)n}}$ and conclude that: 
$$K v= v=q^{{\bf 2n}(N-1)- 2 {\bf (N-1)n}}v,  \ \ \ \forall v\in Im \left( {\tcoev}^{\otimes n} _{V_N}\right).$$
\begin{notation}
Let us consider the vector:
$$w_n^N:={\tcoev}^{\otimes n} _{V_N}(1) \in V_N^{\otimes n} \otimes \left(V_N^*\right)^{\otimes n}.$$
More precisely, it has the form:
\begin{equation}\label{eq:2}
w_n^N=\sum_{i_1,...,i_{n}=0}^{N-1} v_{i_n}\otimes ... \otimes v_{i_{1}} \otimes (v_{i_{1}})^* \otimes ... \otimes (v_{i_n})^*.
\end{equation}
\end{notation}
\subsection{(Step II)-Arriving in a highest weight space}\label{StepII}

\

As we have seen in section \ref{ident}, quantum representations of the braid group encode homological information. This means that for the braid part of the diagram, we could create a bridge towards a homological action if all the  strands would have the same orientation. Therefore, we are interested to arrive at a formula for $J_N(L,q)$ which contains in the middle the action of $B_{2n}$ onto $V_N^{\otimes 2n}$. For the moment, corresponding to the braid group action, we have the Reshetikhin-Turaev functor as follows: $$\F_{V_N}( \beta_{n} \otimes \bar{\unit }_n)\in Aut \left(V_N^{\otimes n} \otimes (V_N^{*})^{\otimes n}\right).$$

The second idea is to insert additional isomorphisms at the first and the third level, which transform $(V_{N}^*)^{\otimes n}$ into $V_N^{\otimes n}$ and act non-trivially just on the last $n$-components of the tensor product, corresponding to the last $n$ strands of the diagram. Then, in the middle, we will have the Reshetikhin-Turaev functor evaluated on the braid, which is exactly the quantum representation:
 $$\varphi^{V_N}_{2n}\left( \beta_n \otimes \mathbb I_n \right)\in Aut \left(V_N^{\otimes 2n}\right).$$ 
In the sequel, we will make this precise. Following the first step, we notice that the bottom part of the diagram corresponding to the cups lead towards an analog of the highest weight space of weight $n(N-1)$. Now we show that we can arrive in the corresponding highest weight space inside $V_N^{\otimes 2n}$.
\begin{lemma} 
For any $n \in \N$, there exist an isomorphism of vector spaces $$\alpha_{n,N}:\left(V_N^{*}\right)^{\otimes n}\rightarrow V_N^{\otimes n}$$
 such that:
\begin{equation}\label{eq:1} 
\left( Id_{V_N}^{\otimes n} \otimes \alpha_{n,N}\right) (w_n^N) \in W^{N}_{2n,n(N-1)}.
\end{equation}
\end{lemma}
\begin{proof}
We search the function $\alpha_{n,N}$ of the form:
$$\alpha_{n,N}=f_1\otimes...\otimes f_n,$$
where $f_i:V_N^{*}\rightarrow V_N$ are isomorphisms of $\Z[q^{\pm 1}]-$ modules, for all $i \in \{1,...,n\}$. We prove the statement by induction on the number of strands. For $n \in \N$, let us consider the following statement $\bf{P(n)}$:

There exists a sequence of isomorphisms of $\Li$-modules $\{f_k \mid k\in \overline{1,n} \}$ such that:
\begin{equation}
\begin{cases}
f_k:V^{*}_N\rightarrow V_N\\
(Id_{V_N})^{\otimes n}\otimes \left( f_1\otimes...\otimes f_n \right) (w_n^N) \in W^{N}_{2n,n(N-1)} (\ref{eq:1}).
\end{cases}
\end{equation}
We remind that in the definition \ref{D:2} of highest weight spaces inside finite dimensional modules that we work with, there is a requirement that they come as specialisations from highest weight spaces over two variables:
$$W^N_{2n,n(N-1)}:=\hat{W}_{2n,n(N-1)}|_{\eta_{N-1}}\cap  \ V^{\otimes 2n}_{N}.$$
\begin{remark}
On the other hand, having in mind the type of isomorphisms that occur in the case of the usual version of the quantum group $U_q(sl(2))$ between the corresponding $N$-dimensional representation and its dual, it would be natural to search for a sequence of coefficients $\{c^i_k \in \Li \} $ such that the functions from $P(n)$ have the form: $$f_k(v^{\star}_i)=c^i_k \ v_{N-i-1}.
$$
\end{remark}
Combining this with the definition of highest weight spaces from above, shows that we would need an extra requirement, namely the existence of a sequence of lifts of the coefficients over two variables $\{ \tilde{c}^i_k \in \Li_s\}$ such that:
\begin{equation}\label{eq:-1}
\begin{cases}
\eta_{N-1}(\tilde{c}^i_k)=c^i_k\\ 
\exists \ \tilde{v}_n^N \in \hat{W}_{2n,n(N-1)} \text{ such that } \eta_{N-1}(\tilde{v}_n^N)=\left( Id^{\otimes n} \otimes \alpha_{n,N}\right) (w_n^N).
\end{cases}
\end{equation}

Having in mind this requirement and the definition of $w_n^N$, presented in equation \ref{eq:2}, we restate the induction hypothesis that we will prove:

$\bf P(n)$: There exist a sequence of coefficients in two variables:
$$\{   \tilde{c}^i_k \in \Li_s \mid k\in \overline{1,n}, i\in \overline{0,N-1}\}$$ such that if one consider the following vector in $\hat{V}^{\otimes 2n}$: 
\begin{equation}\label{eq:2'}
\tilde{v}_n^N:=\sum_{i_1,...,i_{n}=0}^{N-1} \tilde{c}^{i_1}_1 \cdot...\cdot \tilde{c}^{i_{n}}_n \ v_{i_n} \otimes ... \otimes v_{i_{1}} \otimes v_{N-i_{1}-1} \otimes ... \otimes v_{N-i_n-1}
\end{equation}
then it belongs to the highest weight space associated to the weight $n(N-1)$:
 $$\tilde{v}_n^N \in \hat{W}_{2n,n(N-1)}.$$
In other words, on requires the following conditions:
\begin{equation}\label{eq:3}
\begin{cases}
K \tilde{v}_n^N=s^{2n} q^{-2n(N-1)}\tilde{v}_n^N\\
E \tilde{v}_n^N=0.
\end{cases}
\end{equation}
We start with the discussion concerning the $K$-action from above.
\begin{remark}
Since the vector $\tilde{v}_n^N$ has the form given in equation \ref{eq:2'}, more specifically having the property that all its monomials have a constant sum of the indices, then for any choice of the coefficients, the following relation holds:
$$K \tilde{v}_n^N=\tilde{v}_n^N.$$
\end{remark}
In order to see this, we remind that the comultiplication of the quantum group acts as follows:
$$\Delta^{2n-1}(K)=K \otimes ... \otimes K.$$
Then for any indices $i_1,...,i_n \in \{1,...,N-1\}$, we have:
\begin{equation}
\begin{aligned}
K(v_{i_n} \otimes ... \otimes v_{i_{1}} \otimes v_{N-i_{1}-1} \otimes ... \otimes v_{N-i_n-1})= \ \ \ \ \ \ \ \ \ \ \ \ \ \ \ \ \ \ \ \ \ \ \ \ \\
=s^{2n} q^{-2\left(i_1 +...+i_n+(N-1-i_1)+...+(N-1-i_n)\right)} v_{i_n} \otimes ... \otimes v_{i_{1}} \otimes v_{N-i_{1}-1} \otimes ... \otimes v_{N-i_n-1}=\\
=s^{2n} q^{-2n(N-1)} v_{i_n} \otimes ... \otimes v_{i_{1}} \otimes v_{N-i_{1}-1} \otimes ... \otimes v_{N-i_n-1}. \ \ \ \ \ \ \ \ \ \ \ \ \ \ \ \ \ \ \ \ \ \ \
\end{aligned}
\end{equation}
This shows that $\forall i_1,...,i_n \in \{1,...,n\}$: 
$$ v_{i_n} \otimes ... \otimes v_{i_{1}} \otimes v_{N-i_{1}-1} \otimes ... \otimes v_{N-i_n-1} \in \hat{V}_{2n,n(N-1)}. $$
Therefore the first requirement from equation \ref{eq:3} is fulfilled: 
$$\tilde{v}_n^N \in \hat{V}_{2n,n(N-1)}.$$ 
This shows that the only condition that we require from $P(n)$ concerns the $E$-action. This action is done using the iterated comultiplication, which is given by:
$$\Delta^{2n-1}(E)=\sum_{j=1}^{2n}1^{j-1}\otimes E \otimes K^{2n-j}.$$
So, the requirement concerning the coefficients in two variables from equation \ref{eq:3} becomes:
\begin{equation}
\begin{aligned}
\sum_{j=1}^{2n}Id^{j-1}\otimes E \otimes K^{2n-j} \curvearrowright \ \ \ \ \ \ \ \ \ \ \ \ \ \ \ \ \ \ \ \ \ \ \ \ \ \ \ \ \ \ \ \ \  \\
\left( \sum_{i_1,...,i_{n}=0}^{N-1} \tilde{c}^{i_1}_1 \cdot...\cdot \tilde{c}^{i_{n}}_n \ v_{i_n} \otimes ... \otimes v_{i_{1}} \otimes v_{N-i_{1}-1} \otimes ... \otimes v_{N-i_n-1} \right)=0
\end{aligned}
\end{equation}
In the sequel we will show by induction that this condition can be fulfilled.

{\bf I) Base case n=1} 

We search for a sequence of coefficients $\{ \tilde{c}_1^i \in \Li_s \mid i \in \overline{0,N-1}\}$ such that:
\begin{equation}
\left( E \otimes K + 1 \otimes E\right) \curvearrowright \left( \sum_{i=0}^{N-1} \tilde{c}^{i}_1 \cdot  v_{i} \otimes v_{N-i-1} \right)=0.
\end{equation}
This is equivalent to:
\begin{equation}
 \sum_{i=0}^{N-2} sq^{-2(N-i-1)} \tilde{c}^{i}_1 \cdot  v_{i-1} \otimes v_{N-i-1} + \sum_{i=0}^{N-2} \tilde{c}^{i}_1 \cdot  v_{i} \otimes v_{N-i-2} =0.
\end{equation}
By changing the variable $i$ to $i+1$ in the first sum, the equation becomes:
\begin{equation}\label{eq:4}
 \sum_{i=0}^{N-2} \left( sq^{-2(N-i-2)} \tilde{c}^{i+1}_1+\tilde{c}^{i}_1\right) \cdot  v_{i} \otimes v_{N-i-2} =0.
\end{equation}
We consider the sequence of coefficients such that it satisfies the condition below, which implies equation \ref{eq:4}:
\begin{equation}
\begin{cases} 
 \tilde{c}^{i+1}_1=-s^{-1}q^{2(N-i-2)} \tilde{c}^{i}_1\\
  \tilde{c}^{0}_1=1.
  \end{cases}
\end{equation}
This concludes the verification step.

{\bf II) The inductive step: $\bf {P(n)\Rightarrow P(n+1)}$}\\ 

Let us suppose that $P(n)$ is true and aim to prove $P(n+1)$, by searching a sequence of coefficients $\{ \tilde{c}_{n+1}^{i_{n+1}} \in \Li_s \mid i_{n+1} \in \overline{0,N-1}\}$ such that:
\begin{equation}\label{eq:5}
\sum_{j=1}^{2n+2}Id^{j-1}\otimes E \otimes K^{2n+2-j} \curvearrowright \ \ \ \ \  
\end{equation}
\begin{equation*}
\left( \sum_{i_1,...,i_{n+1}=0}^{N-1} \tilde{c}^{i_1}_1 \cdot...\cdot \tilde{c}^{i_{n+1}}_{n+1} \ v_{i_{n+1}} \otimes ... \otimes v_{i_{1}} \otimes v_{N-i_{1}-1} \otimes ... \otimes v_{N-i_{n+1}-1} \right)=0
\end{equation*}

\begin{notation}
Let us consider the following notation:
$$u_{i_1,...,i_n}:=\ v_{i_{n}} \otimes ... \otimes v_{i_{1}} \otimes v_{N-i_{1}-1} \otimes ... \otimes v_{N-i_{n}-1} $$
$$\tilde{c}_{i_1,...,i_n}:=\tilde{c}^{i_1}_1 \cdot...\cdot \tilde{c}^{i_{n}}_{n}.$$
\end{notation}
Then, $P(n)$ is equivalent to:
\begin{equation}\label{eq:6}
\begin{aligned}
\Delta^{2n-1}(E)\left( \sum_{i_1,..,i_n=0}^{N-1} \tilde{c}_{i_1,...,i_n} u_{i_1,...,i_n} \right) =0.
\end{aligned}
\end{equation}
In the sequel, we will study the condition for $P(n+1)$ from equation \ref{eq:5}, by splitting the coevaluation of $E$ into two parts: one part which is associated to the first and the last strand, and another part which corresponds to all the other strands in the middle. This means that there is the first par, which has one term with $E$ on the first component and another term with $E$ on the last component and a second part, which contains all the other terms with $E$ in the middle. 
\begin{equation}
\begin{aligned}
\left( \left( E \otimes K^{\otimes 2n+1}+1^{\otimes 2n+1} \otimes E \right) +  1 \otimes \left( \sum_{j=1}^{2n}1^{j-1}\otimes E \otimes K^{2n-j} \right)\otimes K \right) \curvearrowright \\
\left( \sum_{i_1,...,i_{n+1}=0}^{N-1} \tilde{c}_{i_1,...,i_{n+1}} \left( v_{i_{n+1}}\otimes ... \otimes v_{i_1} \otimes v_{N-1-i_{1}} \otimes ... \otimes v_{N-1-i_{n+1}} \right) \right) =0.
\end{aligned}
\end{equation}
Separating the two parts of the sum, we get:
\begin{equation}\label{eq:7}
\begin{aligned}
\left(  E \otimes K^{\otimes 2n+1}+1^{\otimes 2n+1} \otimes E \right) \curvearrowright \ \ \ \ \ \ \ \ \ \ \ \ \ \ \ \ \ \ \ \ \ \ \ \ \ \ \ \ \ \ \ \\
\left( \sum_{i_{n+1}=0}^{N-1} \tilde{c}_{n+1}^{i_{n+1}} \sum_{i_1,...,i_n=0}^{N-1} \tilde{c}_{i_1,...,i_{n}} \left( v_{i_{n+1}}\otimes  u_{i_1,...,i_n} \otimes v_{N-1-i_{n+1}} \right) \right)+ \ \ \ \ \ \\
+  1 \otimes { \color{red} \left(  \Delta^{2n-1}(E) \right) } \otimes K  \curvearrowright \ \ \ \ \ \ \ \ \ \ \ \ \ \ \ \ \ \ \ \ \ \ \ \ \ \ \ \ \ \ \ \ \ \ \ \ \ \ \ \ \  \\
\left( \sum_{i_{n+1}=0}^{N-1} \tilde{c}_{n+1}^{i_{n+1}}  v_{i_{n+1}}\otimes { \color{red} \left(  \sum_{i_1,...,i_n=0}^{N-1}  \tilde{c}_{i_1,...,i_{n}}  u_{i_1,...,i_n} \right)} \otimes  v_{N-1-i_{n+1}} \right) =0.
\end{aligned}
\end{equation}
Using the induction hypothesis reformulated as in equation \ref{eq:6}, we conclude that the second sum from this formula vanishes. In other words, the conditions for the sequence $\{ \tilde{c}_{n+1}^{i_{n+1}}\}$ are given by:
\begin{equation}
\begin{aligned}
\left(  E \otimes K^{\otimes 2n+1}+1^{\otimes 2n+1} \otimes E \right) \curvearrowright \ \ \ \ \ \ \ \ \ \ \ \ \ \ \ \ \ \ \ \ \ \ \ \ \ \ \ \ \ \ \ \\
\left( \sum_{i_{n+1}=0}^{N-1} \tilde{c}_{n+1}^{i_{n+1}} \sum_{i_1,...,i_n=0}^{N-1} \tilde{c}_{i_1,...,i_{n}} \left( v_{i_{n+1}}\otimes  u_{i_1,...,i_n} \otimes v_{N-1-i_{n+1}} \right) \right)=0.
\end{aligned}
\end{equation}
This is equivalent to the following requirements:
\begin{equation}\label{eq:8}
\begin{aligned}
 \sum_{i_{n+1}=0}^{N-1} \tilde{c}_{n+1}^{i_{n+1}} s^{2n+1} q^{-2(n+1)(N-1)+2 i_{n+1}} \cdot\ \ \ \ \ \ \ \ \ \ \ \ \ \ \ \ \ \ \ \ \ \ \ \ \ \ \ \ \ \ \ \ \ \ \ \ \ \ \  \\
\cdot v_{i_{n+1}-1} \otimes \left( \sum_{i_1,...,i_n=0}^{N-1} \tilde{c}_{i_1,...,i_{n}} u_{i_1,...,i_n}  \right) \otimes v_{N-1-i_{n+1}} + \ \ \ \  \\
+ \sum_{i_{n+1}=0}^{N-1} \tilde{c}_{n+1}^{i_{n+1}} v_{i_{n+1}} \otimes \left( \sum_{i_1,...,i_n=0}^{N-1} \tilde{c}_{i_1,...,i_{n}} u_{i_1,...,i_n}  \right) \otimes v_{N-2-i_{n+1}} =0.
\end{aligned}
\end{equation}
By changing the parameter $i_{n+1}-1$ to $i_{n+1}$ and denoting it by $i$ in the first sum and then gluing the previous two terms together, we obtain the condition:
\begin{equation}\label{eq:9}
\begin{aligned}
 \sum_{i=0}^{N-2} \left( \tilde{c}_{n+1}^{i+1} s^{2n+1} q^{-2(n+1)(N-1)+2 (i+1)} + \tilde{c}_{n+1}^{i}\right) \ \ \ \ \ \ \ \ \ \ \ \ \ \ \ \ \ \ \ \ \ \ \ \ \ \ \ \ \ \ \ \ \ \ \ \ \\
 v_{i} \otimes \left( \sum_{i_1,...,i_n=0}^{N-1} \tilde{c}_{i_1,...,i_{n}} u_{i_1,...,i_n}  \right) \otimes v_{N-2-i} =0.
\end{aligned}
\end{equation}
Then, let us consider the coefficients for $f_{n+1}$ given by the following conditions, which imply equation \ref{eq:9}: 
\begin{equation}
\begin{cases} 
 \tilde{c}^{i+1}_{n+1}=-s^{-(2n+1)}q^{2(n+1)(N-1)-2(i+1)} \tilde{c}^{i}_{n+1}\\
  \tilde{c}^{0}_{n+1}=1.
  \end{cases}
\end{equation}
This concludes the induction step $P(n+1)$.
Then, the set of coefficients $\{c_{i}^{k} \in \Li\}$ is given by the evaluation of the previous sequence using the specialisation $\eta_{N-1}$:
\begin{equation}
\begin{cases} 
 c^{i+1}_{n+1}=-q^{(N-1)-2(i+1)} \tilde{c}^i_{n+1}\\
  c^{0}_{n+1}=1.
  \end{cases}
\end{equation}
We notice that these sequences of coefficients do not depend actually on the strand to which they correspond, namely $n+1$. This shows that all the functions are the same 
$$f=f_1=...=f_n:(V_N)^*\rightarrow V_N$$
$$f(v^{\star}_i)=(-1)^{i}q^{i(N-i)}v_{N-1-i}.$$
However, their lifts towards the generic highest weight space are different and depend on the strand. We obtain the normalising function $\alpha_{n,N}$ as in equation \ref{eq:1} given by:
\begin{equation} \label{L:isostar}
\alpha_{n,N}=f^{\otimes n}.
\end{equation}
\end{proof}
We conclude that using this normalising function $\alpha_{n,N}$, we arrive in the following highest weight space:
$$\left( Id_{V_N}^{\otimes n} \otimes \alpha_{n,N}\right) (w_n^N) \in W^{N}_{2n,n(N-1)}.$$
\subsection{(Step III)-The invariant seen through highest weight spaces}\label{StepIII}

\

So far, we have seen that if we use the extra normalising function from before, we arrive in a highest weight space. Pursuing this idea, we introduce $\alpha_{n,N}$ and its inverse to the first and third level of the corresponding diagram. The interesting part is that this procedure does not modify the invariant that we get. More precisely, we have the following:
\begin{lemma}\label{L:1} 
The coloured Jones polynomial has the following description:
\begin{equation}\label{eq:10} 
\begin{aligned}
J_N(L,q)=\frac{1}{[N]_q}q^{-(N-1)w(\beta_n)} \ev^{\otimes n}_{V_N} \  \circ \ { \color{blue} \left(Id_{V_N}^{\otimes n} \otimes \alpha_{n,N}^{-1}\right)} \circ \ \ \ \ \ \ \ \ \ \ \ \ \ \ \ \ \ \\ 
 \circ \ \F_{V_N} (\beta_{n} \cup \mathbb I_n ) \  \circ { \color{blue} \left(Id_{V_N}^{\otimes n} \otimes \alpha_{n,N} \right)} \circ \ {\tcoev}^{\otimes n}_{V_N}(1). \ \ 
\end{aligned}
\end{equation}
\end{lemma}
\begin{proof} This is the formula from equation \ref{eq:0}, with the two extra terms that contain the normalisation function. The key point is the fact that this function acts on the strands that correspond to $\mathbb I_n$ whereas we have identity which acts to the part that correspond to the braid $\beta_n$. So, the two functions $\alpha_{n,N}$ and its inverse cancel out one with the other. 
\begin{equation}
\begin{aligned}
{ \color{blue} \left(Id_{V_N}^{\otimes n} \otimes \alpha_{n,N}^{-1} \right)}\circ
\F_{V_N} (\beta_{n} \cup \mathbb I_n ) { \color{blue} \left(Id_{V_N}^{\otimes n} \otimes \alpha_{n,N} \right)}= \ \ \ \ \ \ \ \ \ \ \ \\
={ \color{blue} \left(Id_{V_N}^{\otimes n} \otimes \alpha_{n,N}^{-1} \right)} \circ \left( \F_{V_N} (\beta_{n})\otimes Id^{\otimes n}_{V_N} \right) \circ { \color{blue} \left(Id_{V_N}^{\otimes n} \otimes \alpha_{n,N} \right)}= \ \ \ \ \ \ \ \ \ \\
=\left( \F_{V_N} (\beta_{n}) \cup Id^{\otimes n}_{V^{*}_N} \right)=\F_{V_N} (\beta_{n} \cup \bar{\mathbb I}_n). \ \ \ \ \ \ \ \ \ \ \ \ \ \ \ \ \ \ \ \ \  \ \ \ \ \ \ \ \ \ \ \ \ \ \ 
\end{aligned}
\end{equation}
Then, replacing this relation in equation \ref{eq:0}, we conclude the lemma.
\end{proof}
Then, we conclude that we can obtain the invariant by composing the morphisms corresponding to the following diagram:

1) the evaluation                              
$ \ \ \ \ \ \ \ \ \ \ \ \ \ \ \ \ \ \ \ \ \  \tikz[x=1mm,y=1mm,baseline=0.5ex]{\draw[<-] (3,0) .. controls (3,3) and (0,3) .. (0,0); \draw[<-] (6,0) .. controls (6,6) and (-3,6) .. (-3,0); \draw[draw=none, use as bounding box](0,0) rectangle (3.5,3);}
$

\

2)normalising function   $ \ \ \ \ \ \ \ \ \ Id^{\otimes n}_{V_N}\otimes \alpha^{-1}_{n,N}$

\

3) braid level                                    $ \ \ \ \ \ \ \ \ \ \ \ \ \ \ \ \ \ \ \ \ \  \ \ \ \ \beta_n \otimes \unit_{n}$

\

4)normalising function $ \ \ \ \ \ \ \ \ \ Id^{\otimes n}_{V_N}\otimes \alpha_{n,N}$

\

5) the coevaluation                                 
$ \ \ \ \ \ \ \ \ \  \ \ \ \ \ \ \ \   \  \tikz[x=1mm,y=1mm,baseline=0.5ex]{\draw[<-] (0,3) .. controls (0,0) and (3,0) .. (3,3); \draw[<-] (-3,3) .. controls (-3,-3) and (6,-3) .. (6,3); \draw[draw=none, use as bounding box](-0.5,0) rectangle (3,3);}
$

\

\

\begin{adjustwidth}{-4cm}{-4cm} 
\newcommand{\tEv}{\stackrel{\longleftarrow}{\operatorname{Ev}}}
\begin{center}\label{diagram}
\begin{tikzpicture}   
[x=1.2 mm,y=1.8mm]

\node (f) [color=blue]  at (90,-5)   {$\mathbf{\mathscr{F}_{0}}$};
\node (f') [color=blue]  at (90,22)   {$(\beta_n\cup \unit_n)\mathbf{\mathscr{F}_{0}}$};
\node (f'') [color=blue]  at (80,35)   {$\mathbf{(\beta_n\cup \unit_n)\tilde{\mathscr{F}}^N_{n}}$};

\node (g) [color=teal]  at (105,35)   {$\mathbf{\tilde{\mathscr{G}}_{n}^{N}}$};

\node (Alex)  [color=red]             at (25,55)    {$Coloured \ Jones \ invariant$};
\node (c5') [color=red]  at (30,-28)   {\rotatebox{90}{$\in$} };
\node (c5'') [color=red]  at (30,48)   {\rotatebox{270}{$\in$}};

\node (c4') [color=red]  at (32,50)   {$J_N(L,\lambda)$};
\node (q) [color=teal]  at (78,55)   {$Topological \ intersection \ pairing$};
\node (q') [color=teal]  at (80,50)   {$<(\beta_n\cup \unit_n)\tilde{\mathscr{F}}^N_{n},\tilde{\mathscr{G}}_{n}^{N}>_{\alpha_{N-1}}$};

\node (c0')  [color=red]             at (30,-30)    {$1$};
\node (s1)               at (0,-20)    {\text{Step I} \ (\ref{StepI})};
\node (s2)               at (-10,5)    {\text{Step II} \ (\ref{StepII})};
\node (s3)               at (19,5)    {\color{green}\text{Step III} \ (\ref{StepIII})};
\node (s4)               at (44,25)    {\color{green}\text{Step IV} \ (\ref{StepIV})};
\node (s5)               at (44,5)    {\color{red}\text{Step V} \ (\ref{StepV})};
\node (s6)               at (75,-18)    {\color{blue}\text{Step VI} \ (\ref{StepVI})};
\node (s7)               at (105,27)    {\color{teal}\text{Step VII} \ (\ref{StepVII})};
\node (s8)               at (52,48)    {\color{teal}\text{Step VIII} \ (\ref{StepVIII})};

\node (b1)               at (0,0)    {$\mathbf{V}_N^{\otimes n}$};
\node (b2) [color=green] at (30,0)   {$\mathbf{W}^{N}_{2n,n(N-1)}$};
\node (b3) [color=red]   at (60,0)   {$\widehat{\mathbf{W}}^{N-1}_{2n,n(N-1)}$};
\node (b4) [color=blue]  at (90,0)   {$\mathbf{H}_{2n,n(N-1)}|_{\psi_{N-1}}$};
\node (t1)               at (0,20)   {$\mathbf{V}_N^{\otimes n}$};
\node (t2) [color=green] at (30,20)  {$\mathbf{W}^{N}_{2n,n(N-1)}$};
\node (t3) [color=red]   at (60,20)  {$\widehat{\mathbf{W}}^{N-1}_{2n,n(N-1)}$};
\node (t4) [color=blue]  at (90,20)  {$\mathbf{H}_{2n,n(N-1)}|_{\psi_{N-1}}$};
\node (t4') [color=blue]  at (80,31)  {$\mathbf{H}_{2n,n(N-1)}|_{\alpha_{N-1}}$};

\node      [color=teal]  at (103,31) {$\otimes \mathbf{H}_{2n,n(N-1)}^{\partial}|_{\alpha_{N-1}}$};
\node (cbottom)          at (30,-25) {$\Z[q^{\pm}]$};
\node (ctop)             at (30,45)  {$\Z[q^{\pm}]$};
\node (cq) [color=teal]  at (70,45) {$\Q(q)$};

\draw[->]             (cbottom) to node[left,yshift=-2mm,font=\small]{${1) \tCoev}^{\otimes n}_{V_N}$} (b1);
\draw[->,color=green] (cbottom) to node[right,font=\small]{$ 2) \   \textcolor{green}
{\tCoev^{\otimes n}_{V_N}} \ \textcolor{red} {3)}$}            (b2);
\draw[->]             (b1)      to node[left,font=\small]{$1) \varphi^{V_N}_{2n} (\beta_n \cup \mathbb I_n)$}                           (t1);
\draw[->,color=green] (b2)      to node[right,font=\small]{$2) \varphi^{W^N}_{2n,n(N-1)}(\beta_n \cup \mathbb I_n)$}                           (t2);
\draw[->,color=red]   (b3)      to node[right,font=\small]{$3) \hat{\varphi}^{\hat{W}^{N-1}}_{2n,n(N-1)}(\beta_n \cup \mathbb I_n) $}                           (t3);
\draw[->,color=blue]  (b4)      to node[right,font=\small]{$4) l_{2n,n(N-1}(\beta_n \cup \mathbb I_n)$}                           (t4);
\draw[->]             (t1)      to node[left,yshift=1mm,font=\small]{${1)\Ev}^{\otimes n}_{V_N}$}   (ctop);
\draw[->,color=green] (t2)      to node[right,font=\small]{${2)\Ev}^{\otimes n}_{V_N}$}              (ctop);
\draw[->,color=red]   (t3)      to node[right,yshift=3mm,font=\small]{$3){\Ev}^{\otimes n}_{\hat{V}_{\eta_{N-1}}}$}  (ctop);
\draw[->,color=teal]  [in=-10,out=130] (98,35)  to node[right,yshift=2mm, font=\small]{$\langle \, , \rangle_{\alpha_{N-1}}$}      (cq);
\draw[->,color=teal,dashed]  (ctop)  to node[right,font=\small]{}      (cq);
\draw[->]   [color=blue,dashed]          (cbottom) to node[left,yshift=-2mm,font=\small]{} (f);

\node at (15,0)                {$\supseteq$};
\node at (15,20)               {$\supseteq$};
\node at (45,0)  [color=green] {$\subseteq_{ \ \iota}$};
\node at (45,20) [color=green] {$\subseteq$};
\node at (75,0)  [color=blue]  {$\simeq_{\Theta_{N-1}}$};
\node at (75,20) [color=blue]  {$\simeq_{\Theta_{N-1}}$};

\node at (15,14) [font=\small]             { $\equiv$};
\node at (45,14) [font=\small,color=green] {$\equiv$};
\node at (75,14) [font=\small,color=red]   {$\equiv$};
\end{tikzpicture}
\end{center}
\end{adjustwidth}

In the following part, we aim to show that we can see the whole coloured Jones polynomial through the highest weight space from Step II. We introduce the following notation.
\begin{notation}(Normalising the evaluation and coevaluation)

Consider the following morphisms:
\begin{equation}
\begin{cases}
\Ev_{V_N}^{\otimes n}: V^{\otimes 2n}_N \rightarrow \Z[q^{\pm 1}]\\
\tCoev_{V_N}^{\otimes n}: \Z[q^{\pm 1}] \rightarrow  V^{\otimes 2n}_N
\end{cases}
\end{equation}
given by:
\begin{equation} \label{R:ev}
\begin{cases}
\Ev_{V_N}^{\otimes n}:= \ev_{V_N}^{\otimes n} \circ \ \left(Id_{V_N}^{\otimes n} \otimes \alpha_{n,N}^{-1}\right)\\
\tCoev_{V_N}^{\otimes n}:=\left( Id^{\otimes n} \otimes \alpha_{n,N} \right) \  \circ \tcoev_{V_N}^{\otimes n}. 
\end{cases} 
\end{equation}
 \end{notation}

\begin{corollary}
The formula for $J_N(L,q)$ presented in Lemma \ref{L:1} together with the definition of the quantum representation given in \ref{P:hww} lead to the following description:  
\begin{equation}\label{eq:11} 
J_N(L,q)= \frac{1}{[N]_q}q^{-(N-1)w(\beta)} \left( \Ev^{\otimes n}_{V_N} \  \circ \ \varphi^{V_N}_{2n} ( \beta_n \cup \mathbb I_n) \  \circ \ {\tCoev}^{\otimes n}_{V_N} \right) (1) \ \in \Z[q^{\pm 1}].
\end{equation}
\end{corollary}

\begin{remark}\label{R:3} 
Putting together the notation given in equation \ref{eq:2}, the properties of the normalising function $\alpha_{n,N}$ presented in Lemma \ref{eq:1} and the previous notation \ref{R:ev}, we obtain that:
$$\tCoev_{V_N}^{\otimes n}(1) \in W^N_{2n,n(N-1)}.$$
\end{remark}
On the other hand, we know that the action: $$B_{2n}\curvearrowright V^{\otimes 2n}_N$$ preserves the highest weight spaces, in particular preserves $W^N_{2n,n(N-1)}$. Using this invariance together with the previous remark and the formula from \ref{eq:11}, we notice that actually we can obtain $J_N(L,q)$ using just highest weight spaces, by composing the morphisms from the second column (\color{green} 2)\color{black}) from the diagram. We conclude the following.
\begin{proposition}(Coloured Jones invariant through the highest weight space)
\begin{equation}\label{eq:12}
J_N(L,q)=\frac{1}{[N]_q}q^{-(N-1)w(\beta_n)} \left( \Ev^{\otimes n}_{V_N} \  \circ \ \varphi^{W^N}_{2n,n(N-1)} ( \beta_{n} \cup \mathbb I_n) \  \circ \ {\tCoev}^{\otimes n}_{V_N} \right)(1). 
\end{equation}
\end{proposition}
\subsection{(Step IV)-Change of the highest weight space}\label{StepIV}

\

So far, we have seen that $J_N(L,q)$ is encoded by the action through highest weight spaces corresponding to the finite dimensional module $V_N$.
\begin{problem}
These highest weight spaces $W^N_{2n,m}$ inside $V^{\otimes 2n}_N$ do not have a geometric counterpart known yet. This is one of the reasons why there were not known topological interpretations for these invariants. On the other hand, Kohno's Theorem provide a geometric flavor for the bigger highest weight spaces $\hat{W}^{N-1}_{2n, n(N-1)}$, which live inside the power of the Verma module $\hat{V}_{N-1}$. 
\end{problem}
\begin{notation}
  Having this in mind, we look at the inclusion
$$\iota: W^{N}_{2n,n(N-1)}\hookrightarrow  \hat{W}^{N-1}_{2n, n(N-1)}$$ 
\end{notation}
In this part, we study the behaviour of this inclusion with respect to the braid group action. 
More precisely, we show that the quantum representation behaves well with respect to $\iota$.
 \begin{lemma}
The $B_n$-action on the highest weight spaces from the Verma module, lives invariant the highest weight spaces of the finite dimensional module: 
\begin{equation}
\varphi^{\hat{W}^{N-1}}_{n, m} \mid_{W^{N}_{n,m}}=\varphi^{{W}^N}_{n,m} \ \ \ \ \ \ \ \forall n,m \in \N.
\end{equation}
\end{lemma}
\begin{proof}
In relation \ref{R:R} it is given the action of the $R$-matrix on $\hat{V} \otimes \hat{V}$. We are interested in the specialisation of this action by the function $\eta_{N-1}$, which corresponds to the identifications:
 $$\lambda= N-1; \ \ \ \ \ s=q^\lambda.$$ 
It follows that the action $R \curvearrowright \hat{V}_{N-1} \otimes \hat{V}_{N-1}$ has the following form:
\begin{equation}
\begin{aligned}
\mathscr R(v_i\otimes v_j)= q^{-(N-1)(i+j)} \sum_{n=0}^{i} F_{i,j,n}(q) \cdot \ \ \ \ \ \ \ \ \ \ \ \ \ \ \ \ \ \ \ \ \ \ \ \ \ \ \ \ \ \ \ \ \ \ \ \ \ \ \ \\
\cdot  \prod_{k=0}^{n-1}(q^{(N-1)-k-j}-q^{-((N-1)-k-j)}) \ v_{j+n}\otimes v_{i-n}.
\end{aligned} 
\end{equation}

We prove that this action preserves $V_N \otimes V_N$ inside $\hat{V}_{N-1} \otimes \hat{V}_{N-1}$ (\ref{inclusion}). We show this by checking it on the basis $$\{v_i \otimes v_j \mid 0 \leq i,j\leq N-1 \}.$$ Let $0 \leq i,j\leq N-1 $.
We notice that in the above formula, all the indices of the second components decrease, and so the vectors $v_{i-n}$ will remain in $V_N$. 

For the first components, let us suppose that we pass over $V_N$ inside $\hat{V}_{N-1}$, in other words we have the term corresponding to the index $$j+n \geq N.$$
We prove that in this situation, the coefficient will vanish. We notice that for $k=N-1-j$, the corresponding term vanishes:
$$q^{(N-1)-k-j}-q^{-((N-1)-k-j)}=0.$$
Moreover, for $j+n \geq N$, it follows that $N-1-j \leq n-1$, so the term corresponding to $k=N-1-j$ will appear in the previous product. We conclude that the coefficient of the vector $v_{j+n} \otimes v_{i-n}$ vanishes.

Secondly, we have the action: $B_n \curvearrowright W^N_{n,m}\subseteq \hat{W}^{N-1}_{n,m}$. We show that each generator of the braid group $\sigma_i$ preserves $ W^N_{n,m}$. Let $w\in  W^N_{n,m}$. Using that $W^N_{n,m} \subseteq V^N_{n,m}$, it is possible to write: 
$$w= \sum_{e \in E^N_{n,m}} \alpha_{e} v_{e_1}\otimes v_{e_i}\otimes v_{e_{i+1}}\otimes...\otimes v_{e_n}.$$
We remind the action: $\sigma_i w= (Id^{\otimes(i-1)} \otimes \mathscr R \otimes Id^{\otimes(n-i-1)})w$.\\
From the first part, $\sigma_i w$ will modify just the components $i$ and $i+1$ of $w$, and the indexes corresponding to all vectors will remain strictly smaller than $N$. This shows that: 
\begin{equation}\label{eq:13}
 \sigma_i w \in V_N^{\otimes n}.
\end{equation}
Since the action of $B_n$ is an action of $\U$-modules, this commutes with the action of the generators from the quantum group $E$ and $K$, so it preserves the weights and the kernel of $E$. As a conclusion, using this remark and relation \ref{eq:13}, we obtain that: \[\sigma_i w \in \hat{W}^{N-1}_{n,m} \cap V_N^{\otimes n}=W^N_{n,m}.\qedhere\] 
\end{proof}
Up to this moment, we focused on the phenomena that occurs at the bottom part of the diagram, concerning the cups and the braid.  Now we will study the upper part containing the caps, which correspond to the algebraic evaluation.

For the case of the specialised Verma module $\hat{V}_{N-1}$,  we do not have a well defined coevaluation, since it is infinite dimensional. However, we are interested in defining an evaluation type map corresponding to this module. In order to do this, we use the finite dimensional submodule inside it 
$V_N\subseteq \hat{V}_{N-1}$, which has a well defined corresponding evaluation from equation \ref{E:DualityForCat} and define an evaluation type map on $\hat{V}_{N-1}$, supported on the submodule $V_N$.
Let us make this precise. 
 \begin{definition}(Normalised evaluation on the Verma module) 
 
 Consider $\Ev^{\otimes n}_{\hat{V}_{N-1}}: \hat{V}_{N-1}^{\otimes n} \rightarrow \Z[q^{\pm}] $ given by the expression:
\begin{equation}
\Ev^{\otimes n}_{\hat{V}_{N-1}}(v_{i_1}\otimes...\otimes v_{i_n})=
\begin{cases}
& \Ev^{\otimes n}_{V_N}(v_{i_1}\otimes...\otimes v_{i_n}),  \textit{if} \ \ i_1,...,i_n \leqslant N-1  \\
& 0, \ \ \ \ \ \ \ \textit{otherwise}.\\
\end{cases}
\end{equation}
and extended it by linearity. 
\end{definition}
\begin{remark}
This is an extension of the previous evaluation:
$$\Ev^{\otimes n}_{\hat{V}_{N-1}} |_{ V_N^{\otimes n}}=\Ev^{\otimes n}_{V_N}.$$
\end{remark}

\subsection{(Step V)-The invariant through highest weight spaces in the Verma module}\label{StepV}

\

In the following part, our strategy is to use the highest weight spaces from the Verma module and show that we can see the coloured Jones polynomial through them. 

So far, we have seen that we could construct the invariant following the second column of the diagram. Now, we will start with the normalised coevaluation which arrives in the highest weight spaces $W^N_{2n,n(N-1)}$ from $V_N$, then following the inclusion into $\hat{W}^{N-1}_{2n,n(N-1)}$. Now, we follow the braid group action on these bigger highest weight spaces and finally close with the evaluation $\Ev^{\otimes n}_{\hat{V}_{N-1}}$.
\begin{proposition}\label{eq:14}
 We conclude that the coloured Jones polynomial can be obtained through the highest weight spaces of weight $n(N-1)$ from the Verma module, following column (\color{red}3) \color{black}) from the diagram \ref{diagram}:
\begin{equation}
J_N(L,q)=\frac{1}{[N]_q}q^{-(N-1)w(\beta_n)} \left( \Ev_{\hat{V}_{N-1}}^{\otimes n} \  \circ \ \hat{\varphi}^{\hat{W}^{N-1}}_{2n,n(N-1)} (\beta_{n}\cup \unit_n)  \ \ \circ \iota \ \ \circ \ \tCoev^{\otimes n}_{V_N} \right)(1).
\end{equation}
\end{proposition}
\subsection{(Step VI)-Construction of the first homology class } \label{StepVI}

\

Now, we start the construction of the homology classes. The advantage of the bigger highest weight spaces $\hat{W}^{N-1}_{2n,n(N-1)}$ consists in the fact that they have an homological correspondent, given by the Lawrence representation $\HH_{2n,n(N-1)}$, due to Kohno's relation. In the sequel, we encode the cups of the diagram, corresponding to the normalised coevaluation using the Lawrence representation.

We consider the element corresponding to the image of $1$ through the normalised coevaluation seen inside the highest weight space $\hat{W}^{N-1}_{2n,n(N-1)}$. After that, we reverse it using Khono's function towards a topological class in the Lawrence representation, as follows.
\begin{definition}(The first homology class $ \mathscr F_0$, over $\Z[q^{\pm 1}]$)

Let us define the vector $v \in \hat{W}^{N-1}_{2n, n(N-1)}$ given by:
\begin{equation}\label{eq:15'}
v=\iota \circ {\tCoev}^{\otimes n}_{V_N}(1) \in \hat{W}^{N-1}_{2n, n(N-1)}.
\end{equation}
Then, using the isomorphism between quantum and homological representations from Theorem \ref{Th:K}, we consider the homology class in the Lawrence representation given by the following relation:
\begin{equation}\label{eq:15}  
\mathscr F_0:= \Theta_{N-1}^{-1}(v) \in \HH_{2n,n(N-1)}|_{\psi_{N-1}}.  
\end{equation}
\end{definition}
\begin{proposition}(Braid group action) \label{P:2}
The correspondence between the vector $v$ and the homology class $\mathscr F_0$ is preserved under the braid group action and we have the following relation:
\begin{equation}
\varphi^{\hat{W}^{N-1}}_{2n,n(N-1)}(\beta_{n}\cup \unit) (v)= \Theta_{N-1} \left( l_{2n,n(N-1)}|_{\psi_{N-1}}\left(\beta_{n}\cup \unit\right) (\mathscr F_0) \right).\label{R:iden}
\end{equation}
\end{proposition}
\begin{proof}
This comes from the identification between braid group actions from Theorem \ref{Th:K} and definition of the homology class $\mathscr F_0$ given in equation \ref{eq:15}.
\end{proof}

Up to this point, we found the first homology class $\mathscr F_0$ which encodes homologically the algebraic coevaluation ${\tCoev}^{\otimes n}_{V_N}$. Moreover, the braid part of the diagram, which is encoded by the braid group action on the quantum side, corresponds to the homological braid group action applied onto this class.
\subsection{(Step VII)-Construction of the second homology class }\label{StepVII}

\

 In the sequel, we are interested to find the second homology class $\tilde{\mathscr G}_n^N$, which will be a geometric counterpart for the part of the diagram containing the caps. The main ingredient that we use is the non-degenerate intersection form between Lawrence representation and its dual. More precisely, our aim is to encode homologically the evaluation 
 $$\Ev_{V_N}^{\otimes n}: W^N_{2n,n(N-1)}\rightarrow \Z[q^{\pm 1}].$$ Even if we are interested in this function, in practice we will use the extended evaluation 
 $$\Ev_{\hat{V}_{N-1}}^{\otimes n}: \hat{W}^{N-1}_{2n, n(N-1)}\rightarrow \Z[q^{\pm 1}],$$ which encodes the evaluation on the highest spaces of the finite dimensional module, but it is defined on the bigger highest weight space.
Then, we study this as an element of the dual space:
$$\Ev_{\hat{V}_{{N-1}}}^{\otimes n}\in \left(\hat{W}^{N-1}_{2n, n(N-1)}\right)^*.$$Via the identification between quantum and homological braid group representations, it corresponds to an element from the dual space:
$$\left( \HH_{2n,n(N-1)}|_{\psi_{N-1}}\right)^{\star}.$$ 
Our aim is to make the correspondence between this map and a geometric element from the dual Lawrence representation $\HH^{\partial}_{2n,n(N-1)}|_{\psi_{N-1}}$, using the intersection pairing. We will use the discussion from Section\ref{dualisation} concerning different flavours of specialisations of the Blanchfield pairing. 

At this point we notice a subtlety concerning the coefficients that we work with. So far, concerning the homological side, we needed the specialisation:  $$\psi_{N-1}: \Z[x^{\pm},d^{\pm}]\rightarrow \Z[q^{\pm}].$$
Corresponding to these coefficients, one has the non-degenerate intersection form:
$$< , >|_{\psi_{N-1}}: \HH_{2n,n(N-1)}|_{\psi_{N-1}}\otimes \HH^{\partial}_{2n,n(N-1)}|_{\psi_{N-1}}\rightarrow \Z[q^{\pm 1}].$$
However, the non-degenerancy does not ensures that the evaluation, seen as an element from the dual of this specialisation of the Lawrence representation can be seen as the intersection with a dual element. The issue comes from the fact that we are working over a ring and not a fleld. In the sequel, we change all these coefficients passing to the field of fractions, to arrive in the situation where the dualising procedure is more convenient. 
We remind the change of coefficients from definition \ref{p}:
\begin{equation}
\begin{cases}
\alpha_{N-1}: \Z[x^{\pm},d^{\pm}] \rightarrow \Q(q)\\\alpha_{\lambda}=\iota \circ \psi_{N-1}.
\end{cases}
\end{equation}
Then, using this specialisation, there is the following non-degenerate sesquilinear form:
\begin{equation}
< , >|_{\alpha_{N-1}}: \HH_{2n,n(N-1)}|_{\alpha_{N-1}}\otimes \HH^{\partial}_{2n,n(N-1)}|_{\alpha_{N-1}}\rightarrow \Q(q).
\end{equation}
\begin{definition}(The second homology class $\tilde{ \mathscr G}_n^N$)\label{G}

Let us define the following element:
$$\mathscr{G}_0:=\Ev_{\hat{V}_{{N-1}}}^{\otimes n} \circ \ \Theta_{N-1} \in Hom(\HH_{2n,n(N-1)}|_{\psi_{N-1}}, \Z[q^{\pm}]).$$
Then, let us consider the associated dual class given by Remark \ref{R:pair}:
$$\tilde{\mathscr G}_n^N \in \HH^{\partial}_{n,m}|_{\alpha_{N-1}}.$$
\end{definition}
\begin{remark}\label{GG}
This means that $\forall \mathscr E \in \HH_{2n,n(N-1)}|_{\alpha_{N-1}}$ we have:
$$ \mathscr{G}_0 \otimes Id_{\Q(q)} (\mathscr E)= <\mathscr E, \tilde{\mathscr G}^N_n>|_{\alpha_{N-1}}.$$
\end{remark}
Since for the construction of the second homology class we needed this change of coefficients, we consider the element which corresponds to the first homology class over this field as follows. 
\begin{definition}(The first homology class $\tilde{\mathscr F}_n^N$)

Let us consider the homology class corresponding to $\mathscr F_0$ over the field $\Q(q)$, using the map $p_{N-1}$ as in \ref{N:2} :
\begin{equation} \label{eq:23}
 \tilde{\mathscr F}_n^N:=p_{N-1}(\mathscr F_0)=\left( \mathscr F_0 \otimes_{\iota} 1 \right)\in \HH_{2n,n(N-1)}|_{\alpha_{N-1}}.
\end{equation}
\end{definition}
\subsection{(Step VIII)-Proof of the intersection formula }\label{StepVIII}

\

Now we will prove that the coloured Jones polynomial can be obtained from the intersection formula from equation \ref{formula}. Putting all the previous steps together, we obtain the following:
\begin{equation}\label{eq:19}
\begin{aligned}
J_N(L,q)=^{{\text Prop } \  \ref{eq:14}} \frac{1}{[N]_q}q^{-(N-1)w(\beta_n)} \ \ \ \ \ \ \ \ \ \ \ \ \ \  \ \ \ \ \ \ \ \ \ \ \ \ \ \ \ \ \ \ \ \ \ \ \ \ \ \ \ \ \ \ \ \ \ \ \\
\left( \Ev_{\hat{V}_{N-1}}^{\otimes n} \  \circ \ \hat{\varphi}^{\hat{W}^{N-1}}_{2n,n(N-1)} (\beta_{n}\cup \unit_n) \circ \iota \circ \ \tCoev^{\otimes n}_{V_N} (1)\right)= \\
\end{aligned}
\end{equation}
\begin{equation*}
\begin{aligned}
=^{{\text Equation } \  \ref{eq:15}} \frac{1}{[N]_q}q^{-(N-1)w(\beta_n)} \cdot \ \ \ \ \ \ \ \ \ \ \ \ \ \ \ \ \ \ \ \ \ \ \  \ \ \ \ \ \  \ \ \ \ \ \ \ \ \ \ \ \ \ \ \ \ \ \ \ \ \ \ \ \ \ \ \ \ \ \ \ \\
\left( \Ev_{\hat{V}_{N-1}}^{\otimes n} \circ {\color{red} {\Theta_{N-1}}}\circ{\color{green} {\Theta_{N-1}}^{-1}} \circ  \hat{\varphi}^{\hat{W}^{N-1}}_{2n,n(N-1)} (\beta_{n}\cup \unit_n) \circ {\color{green} {\Theta_{N-1}}}\circ {\color{red} {\Theta_{N-1}^{-1}}}(v)\right)= \\
=^{{\text Definition } \  \ref{eq:15}} \frac{1}{[N]_q}q^{-(N-1)w(\beta_n)} \cdot \ \ \ \ \ \ \ \ \ \ \ \ \ \ \ \ \ \ \ \ \ \ \ \ \ \ \ \ \ \ \ \ \ \ \ \ \ \ \ \ \ \ \ \ \ \ \ \ \ \ \ \ \ \ \ \ \ \ \ \\
\left( \Ev_{\hat{V}_{N-1}}^{\otimes n} \circ {\color{red} {\Theta_{N-1}}}\circ{\color{green} {\Theta_{N-1}}^{-1}} \circ  \hat{\varphi}^{\hat{W}^{N-1}}_{2n,n(N-1)} (\beta_{n}\cup \unit_n) \circ {\color{green} {\Theta_{N-1}}}\left( \mathscr F_0\right)\right)= \\
=^{{\text Proposition } \  \ref{P:2}} \frac{1}{[N]_q}q^{-(N-1)w(\beta_n)} \cdot \ \ \ \ \ \ \ \ \ \ \ \ \ \ \ \ \ \ \ \ \ \ \ \ \ \ \ \ \ \ \ \ \ \ \ \ \ \ \ \ \ \ \ \ \ \ \ \ \ \ \ \ \ \ \ \\
\left( \Ev_{\hat{V}_{N-1}}^{\otimes n} \circ {\color{red} {\Theta_{N-1}}} \circ  l_{2n,n(N-1)}|_{\psi_{N-1}} (\beta_{n}\cup \unit_n)\left( \mathscr F_0\right)\right)= \\
=^{ \text{Definition}  \ \ref{G}} \frac{1}{[N]_q}q^{-(N-1)w(\beta_n)} \cdot  \mathscr{G}_0 \left( l_{2n,n(N-1)}|_{\psi_{N-1}} (\beta_{n}\cup \unit_n)\ (\mathscr F_0) \right). \  \ \ \ \ \ \
\end{aligned}
\end{equation*}

\begin{remark}\label{GGG}
Let the morphism of changing the coefficients $p_{N-1}$, as in \ref{p}. Then $p_{N-1}$ commutes with the braid groups actions on the two specialisations of the Lawrence representations as below: 
\begin{center}
\begin{tikzpicture}
[x=1.2mm,y=1.4mm]

\node (b1)  [color=blue]             at (0,0)    {$\HH_{2n,n(N-1)}|_{\psi_{N-1}}$};
\node (b2) [color=cyan] at (30,0)   {$\HH_{2n,n(N-1)}|_{\alpha_{N-1}}$};
\node (t1)  [color=blue]             at (0,20)   {$\HH_{2n,n(N-1)}|_{\psi_{N-1}}$};
\node (t2) [color=cyan] at (30,20)  {$\HH_{2n,n(N-1)}|_{\alpha_{N-1}}$};

\draw[->,color=blue]             (b1)      to node[left,font=\small]{$l_{2n,n(N-1)}|_{\psi_{N-1}} $}                           (t1);
\draw[->,color=cyan] (b2)      to node[right,font=\small]{$l_{2n,n(N-1)}|_{\alpha_{N-1}} $}                           
(t2);
\draw[->,color=cyan]   (b1)      to node[right,font=\small]{$$}                           (b2);
\draw[->,color=cyan]  (t1)      to node[right,font=\small]{$$}                           (t2);

\
\node at (-8,-5)             {\color{blue}$\mathscr F_0$};
\node at (-8,16)             {\color{blue}$\beta_{} \mathscr F_0$};
\node at (37,-5)             {\color{cyan}$\tilde{\mathscr F}_n^N$};
\node at (37,16)             {\color{cyan}$\beta_{}\tilde{\mathscr F}_n^N$};

\node at (15,3)             {\color{blue}$p_{N-1}$};
\node at (15,18)             {\color{blue}$p_{N-1}$};
\node at (15,10)             {$\equiv$};
\end{tikzpicture}
\end{center}
\begin{equation}\label{eq:20}
p_{N-1}\left(l_{2n,n(N-1)}|_{\psi_{N-1}}\left(\beta\right) \left( \mathscr F_0\right) \right)=l_{2n,n(N-1)}|_{\alpha_{N-1}}\left(\beta\right) \left( \tilde{\mathscr F}_n^N \right), \ \ \forall \beta \in B_{2n}.
\end{equation}
\end{remark}
\begin{remark}
Following the properties of the geometric duals from \ref{C:duals}, we have:
\begin{equation}\label{eq:18}
\begin{aligned}
\mathscr G_0(\cdot)=(\mathscr G_0 \otimes Id_{\Q(q)})\circ p_{N-1}(\cdot) = \ \ \ \ \ \ \ \ \ \ \ \ \ \ \ \ \ \ \ \ \ \ \ \ \ \ \ \ \ \ \ \ \ \ \ \ \ \ \\
=\mathscr G \circ p_{N-1}(\cdot)= <p_{N-1}(\cdot), \tilde{\mathscr{G}}_n^N>|_{\alpha_{N-1}}, \ \ \ \forall \cdot \in \HH_{2n,n(N-1)}|_{\psi_{N-1}}.
\end{aligned}
\end{equation} 
\end{remark}
Following the previous two remarks concerning the change of coefficients over the field of fractions and the braid group actions, together with equation \ref{eq:19}, we obtain:
\begin{equation*}
\begin{aligned}
J_N(L,q)=^{\text{Equation }\ref{eq:18}}\frac{1}{[N]_q}q^{-(N-1)w(\beta_n)} \cdot \ \ \ \ \ \ \ \ \ \ \ \ \ \ \ \ \ \ \ \ \ \ \ \ \ \ \ \ \ \ \ \ \ \ \ \ \ \ \ \ \ \ \ \ \ \ \ \ \ \ \ \ \ \ \ \ \ \ \ \ \ \ \ \\
   <p_{N-1}\left( l_{2n,n(N-1)}|_{\psi_{N-1}} (\beta_{n}\cup \unit_n)\ (\mathscr F_0) \right), \tilde{\mathscr G}_n^N>|_{\alpha_{N-1}}= \ \ \ \ \ \ \ \  \ \ \ \ \ \ \ \ \  \ \ \ \ \ \ \ \ \\
=^{\text{ Equation } \ref{eq:20}} \frac{1}{[N]_q}q^{-(N-1)w(\beta_n)} \cdot < l_{2n,n(N-1)}|_{\alpha_{N-1}} (\beta_{n}\cup \unit_n)\ (\tilde{\mathscr F}_n^N), \tilde{\mathscr G}_n^N >|_{\alpha_{N-1}}. \ \ \ \ \ \ \ \ \ \  
\end{aligned}
\end{equation*}
Simplifying the notations in the last equality, we obtain the desired interpretation, which concludes the proof.
\end{proof}

\section{Topological model with non-specialised Homology classes } \label{ss}

For the homological model for $J_N(L,q)$, we have constructed homology classes
 $$\tilde{\mathscr F}_n^N \in H_{2n, n(N-1)}|_{\alpha_{N-1}} \ \ \  \text{and} \ \ \ \ \tilde{\mathscr G}_n^N \in H^{\partial}_{2n, n(N-1)}|_{\alpha_{N-1}}$$ which lead to the invariant through the topological intersection pairing. We notice that the colour $N$ appears in two places. The first part where it showes up concerns the number of points from the configuration space, since we are using configuration spaces in a fixed punctured disk (with $2n$ points removed), but with $ n(N-1)$ points. Secondly, the specialisation $\alpha_{N-1}$ depends on the color $N$.

 In this section, we will show that actually $ \tilde{\mathscr F}_n^N $ and $\tilde{\mathscr G}_n^N$ come from two homology classes that live in the unspecialised Lawrence representation. More specifically, they live in the homology of the configuration space $\tilde{C}_{2n. n(N-1)}$, over a larger  ring of coefficients. The feature of this model is that now the color shows up just in the number of points from the configuration space, but not anymore in the specialisation. More precisely, we will prove the statement from Theorem \ref{T:geom}, showing that we can construct two homology classes
$$ \mathscr F_n^N \in H_{2n, n(N-1)}|_{\gamma} \ \ \text{and} \ \ \  \mathscr G_n^N \in H^{\partial}_{2n, n(N-1)}|_{\gamma}$$ 
 so that the $N^{th}$coloured Jones polynomial has the formula:
\begin{equation}
J_N(L,q)= \frac{1}{[N]_q}q^{-(N-1)w(\beta_n)}< (\beta_{n}\cup \unit_n) \mathscr{F}_n^N , \mathscr{G}_n^N>|_{\delta_{N-1}}. 
\end{equation}
Here $\gamma$ is a specialisation over the field $\Q(s,q)$ which does not depend on $N$, whereas $\delta_{N-1}$ is a change of coefficients towards $\Q(q)$, defined using the colour. 

\subsection{Identifications with $q,s$ indeterminates}

In Section \ref{ident}, we have studied identifications between quantum representations and homological representations which are specialised with two complex generic parameters or using a natural number and an indeterminate. In this section, we will show that, if we increase a bit the ring of coordinates, the identification holds also over a ring with two indeterminates.

We recall that the quantum representation $\hat{W}$ is defined over $\Z[\qs^{\pm}, s^{\pm}]$. On the other hand, the Lawrence representation $\HH_{n,m}$ is defined over $\Z[x^{\pm},d^{\pm}]$. We have the following spacialisations:
\begin{equation}
\begin{cases}
\eta_{\lambda}: \Z[\qs^{\pm1}, s^{\pm1}]\rightarrow \Z[\qs^{\pm}] \ \ \ \ \ \ \ \ \\
\eta_{\lambda}(\qs)=\qs; \ \ \eta_{\lambda}(s)=\qs^{\lambda}.
\end{cases}
\end{equation}
\begin{equation}
\begin{cases}
\psi_{\lambda}: \Z[x^{\pm},d^{\pm}]\rightarrow \Z[\qs^{\pm}]\\
\psi_{\lambda}(x)= \qs^{2 \lambda}; \ \ \psi_{\lambda}(d)=-\qs^{-2}.
\end{cases}
\end{equation}

\begin{definition}\label{coeff}
Consider the specialisation which increases the ring of coefficients in the following manner:
\begin{equation}
\begin{cases}
\xi: \Z[x^{\pm},d^{\pm}]\rightarrow \Z[s^{\pm},\qs^{\pm}]\\
\xi(x)= s^2; \ \ \xi(d)=-\qs^{-2}.
\end{cases}
\end{equation}
For a certain reason which we will see later, we need to work over a field. Let us consider the inclusion map:
$$j: \Z[s^{\pm},\qs^{\pm}]\rightarrow \Q(s,\qs).$$
Then, let us define the extension of the initial ring by:
\begin{equation}
\begin{cases}
\gamma: \Z[x^{\pm},d^{\pm}]\rightarrow \Q(s,\qs)\\
\gamma=j \circ \xi.
\end{cases}
\end{equation}
In order to make the connection to the specialisation that we used for the model from Theorem \ref{T:geom1}, let us consider the change of coefficients:
\begin{equation}
\begin{cases}
\delta_{\lambda}:  \Q(s,\qs)\rightarrow  \Q(\qs)\\
\delta_{\lambda}(s)=\qs^{\lambda}.
\end{cases}
\end{equation}

\end{definition}

\begin{remark} \label{L:special}
This shows that we have the following commutative diagrams between the previous specialisations of the coefficients:
\begin{center}
\begin{tikzpicture}
[x=1.2mm,y=1.4mm]

\node (b1)               at (0,30)    {$\mathbb Z[x^{\pm1},d^{\pm1}]$};
\node (b2) [color=green] at (30,20)   {$\mathbb Z[s^{\pm1},q^{\pm1}]$};
\node (b3) [color=red]   at (30,8)   {$\mathbb Z[q^{\pm1}]$};
\node (b4) [color=blue]  at (30,-5)   {$\mathbb Q(q)$};
\node (b5) [color=orange]  at (60,30)   {$\mathbb Q(s,q)$};

\draw[->,color=green]   (b1)      to node[right,yshift=2mm,font=\large]{$\xi$}                           (b2);
\draw[->,color=red]             (b2)      to node[left,font=\large]{$\eta_{\lambda} $}   (b3);
\draw[->,color=teal]             (b3)      to node[left,font=\large]{$\iota$}   (b4);
\draw[->,color=red]             (b1)      to node[left,font=\large]{$\psi_{\lambda}$}   (b3);

\draw[->,color=blue]             (b1)      to node[left,font=\large]{$\alpha_{\lambda}$}   (b4);
\draw[->,color=orange]   (b1)      to node[right,xshift=-2mm,yshift=3mm,font=\large]{$\gamma$}                        (b5);
\draw[->,color=orange]   (b2)      to node[right,yshift=2mm,xshift=-5mm,font=\large]{$j$}                        (b5);
\draw[->,color=blue]   (b5)      to node[right,font=\large]{$\delta_{\lambda}$}                        (b4);


\end{tikzpicture}
\end{center}

\end{remark}

Using a similar argument as the one that we discussed in \ref{oneindeter}, one can concludes that the identification between quantum and homological representations works over a ring in two indeterminates. This was also briefly discussed in \cite{Ito}.
\begin{theorem} \label{T:twovar}
The braid group representations over $\Z[s^{\pm}, q^{\pm}]$ are isomorphic:
\begin{equation}
\left(\hat{W}_{n,m}, \mathscr B_{\hat{W}_{n,m}} \right) \simeq_{\Theta} \left(\HH_{n,m}|_{\xi},\mathscr B_{\HH_{n,m}}|_{\xi}\right)
\end{equation}
\end{theorem}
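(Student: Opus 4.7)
The plan is to mimic the two arguments already carried out in Theorems \ref{T:identif} and \ref{Th:K}, but pushing the identification one ring further to $\Z[s^{\pm 1}, q^{\pm 1}]$ by means of a Zariski density argument in the two complex parameters $(h, \lambda)$. First I would define the candidate intertwiner on the distinguished bases,
\begin{equation*}
\Theta : \HH_{n,m}|_{\xi} \longrightarrow \hat{W}_{n,m}, \qquad [\tilde{\F}_e] \otimes_\xi 1 \longmapsto \phi(v^s_{\iota(e)}), \quad e \in E_{n,m}.
\end{equation*}
Because both sides are free modules of rank $d_{n,m}$ over $\Z[s^{\pm 1}, q^{\pm 1}]$ (the left-hand side by the analogue of Lemma \ref{L:mult} over $\Z[x^{\pm},d^{\pm}]$ and base change along $\xi$, the right-hand side by Proposition \ref{P:bqu}), $\Theta$ is automatically an isomorphism of $\Z[s^{\pm 1}, q^{\pm 1}]$-modules. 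The entire content of the theorem is therefore the assertion that $\Theta$ intertwines the braid actions.

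Next I would translate this into a matrix identity. For each braid $\beta \in B_n$, let $A(\beta) \in GL(d_{n,m}, \Z[x^{\pm 1}, d^{\pm 1}])$ be the matrix of $l_{n,m}(\beta)$ in the multifork basis and $B(\beta) \in GL(d_{n,m}, \Z[s^{\pm 1}, q^{\pm 1}])$ the matrix of $\varphi^{\hat{W}}_{n,m}(\beta)$ in the basis $\mathscr B_{\hat{W}_{n,m}}$. The intertwining statement is exactly
\begin{equation*}
\xi\bigl(A(\beta)\bigr) \;=\; B(\beta) \quad \text{in } GL(d_{n,m}, \Z[s^{\pm 1}, q^{\pm 1}]), \qquad \forall\, \beta \in B_n.
\end{equation*}

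To verify this I would invoke Theorem \ref{T:identif}: for every $(h,\lambda) \in \C^{*} \times \C$ with $\q = e^h$, the Lawrence and quantum representations at parameters $\psi_{\q,\lambda}$ and $\eta_{\q,\lambda}$ coincide in the corresponding bases, i.e.\ $\psi_{\q,\lambda}(A(\beta)) = \eta_{\q,\lambda}(B(\beta))$. The commutative diagram recorded in Remark \ref{L:special} gives $\psi_{\q,\lambda} = \eta_{\q,\lambda} \circ \xi$, so entry by entry we obtain
\begin{equation*}
\eta_{\q,\lambda}\bigl(\xi(A(\beta))\bigr) \;=\; \eta_{\q,\lambda}\bigl(B(\beta)\bigr), \qquad \forall\, (h,\lambda) \in \C^{*}\times \C.
\end{equation*}
Thus each matrix entry is a Laurent polynomial $P(s,q) \in \Z[s^{\pm 1}, q^{\pm 1}]$ whose value at $(s,q) = (\q^{\lambda}, \q)$ vanishes for every $(h,\lambda)$. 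The image of the evaluation map $(h, \lambda) \mapsto (e^{\lambda h}, e^{h})$ already contains $\{(s,q) : q \in \C^{*}\setminus\{\text{roots of unity}\}, s \in \C^{*}\}$, a Zariski-dense subset of $(\C^{*})^{2}$; hence the polynomial is identically zero, which forces $\xi(A(\beta)) = B(\beta)$.

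The main obstacle is really the density step: one has to be a little careful that the exponential-type substitution $(h,\lambda)\mapsto (e^{h},e^{\lambda h})$ genuinely hits a Zariski-dense set of closed points of $\operatorname{Spec} \C[s^{\pm 1}, q^{\pm 1}]$, because this is the only input allowing one to pass from the pointwise identity guaranteed by Theorem \ref{T:identif} to a polynomial identity over $\Z[s^{\pm 1}, q^{\pm 1}]$. Once this is secured, everything else is formal bookkeeping: the matrix identity assembles into an equality of braid group representations on the distinguished bases, which is exactly the statement $\bigl(\hat{W}_{n,m}, \mathscr B_{\hat{W}_{n,m}}\bigr) \simeq_{\Theta} \bigl(\HH_{n,m}|_{\xi}, \mathscr B_{\HH_{n,m}}|_{\xi}\bigr)$.
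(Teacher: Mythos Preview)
Your proposal is correct and follows essentially the same approach as the paper. The paper does not actually write out a proof of Theorem~\ref{T:twovar}: it merely states that one uses ``a similar argument as the one that we discussed in \ref{oneindeter}'' (i.e.\ the proof of Theorem~\ref{Th:K}), and your argument is precisely the natural two-variable version of that proof --- define $\Theta$ on the distinguished bases, reduce to a matrix identity, invoke Theorem~\ref{T:identif} at all complex parameters, and then pass from pointwise equality to equality of Laurent polynomials. Your explicit use of Zariski density of the image of $(h,\lambda)\mapsto(e^{\lambda h},e^{h})$ in $(\C^{*})^{2}$ is exactly the step that replaces the implicit one-variable ``equal at all $\q$'' argument in Theorem~\ref{Th:K}; in fact the image is all of $(\C^{*})^{2}$, so the density claim is even easier than you suggest.
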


\subsection{Lift of the homology classes $\tilde{\mathscr {F}}_n^N$ and $ \tilde{\mathscr {G}}_n^N$}

\

Having in mind this identification between the braid group actions, a natural question would be to lift the homology classes constructed in Theorem \ref{T:geom1}, which live a priori in the homology groups specialised by $\alpha_{N-1}$, towards two elements belonging to the Lawrence representation specialised over two variables, using the specialisation $\xi$. However, since in our arguments we need to work over a field in order to be able to interpret the non-degeneracy of the Blanchfield pairing by dual elements, we will use the specialisation $\gamma$. We will prove the following lifting property.

\begin{lemma}\label{L:lift}
There exist two homology classes 
 $$\mathscr F_n^N \in H_{2n, n(N-1)}|_{\gamma}  \ \ \ \text{and} \ \ \ \mathscr G_n^N \in H^{\partial}_{2n, n(N-1)}|_{\gamma}$$
 such that under the specialisation $\delta_{N-1}$ one has:
 \begin{equation}
 \begin{cases}
 \mathscr F_n^N |_{\delta_{N-1}}=\tilde{\mathscr F}_n^N\\
 \ \mathscr G_n^N |_{\delta_{N-1}}=\tilde{\mathscr G}_n^N.
\end{cases}
\end{equation} 
\end{lemma}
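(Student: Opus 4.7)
The strategy is to exploit the factorisation $\alpha_{N-1}=\delta_{N-1}\circ\gamma$ recorded in Remark~\ref{L:special} and to lift each of the constructions from Steps~\ref{StepVI} and~\ref{StepVII} to the field $\Q(s,q)$, specialising afterwards by $\delta_{N-1}$. The two enabling ingredients are: (i) Theorem~\ref{T:twovar}, which provides a basis-preserving isomorphism $\Theta\colon\HH_{n,m}|_{\xi}\to\hat W_{n,m}$ valid over $\Z[s^{\pm},q^{\pm}]$; and (ii) the fact that the Blanchfield pairing remains non-degenerate after specialisation by $\gamma$, because each diagonal coefficient $p_e\in\N[d^{\pm}]$ has non-trivial constant term and $\gamma(d)=-q^{-2}$ is a unit in $\Q(s,q)$, so $\gamma(p_e)\in\Q(s,q)^{\times}$.

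For the class $\mathscr F_n^N$, I would exhibit a lift directly. The induction in Step~\ref{StepII} was already carried out over the two-variable ring $\Li_s$: the coefficients $\tilde c^{\,i}_k\in\Li_s$ yield an element $\tilde v_n^N\in\hat W_{2n,n(N-1)}$ satisfying $\tilde v_n^N|_{\eta_{N-1}}=\bigl(\Id^{\otimes n}\otimes\alpha_{n,N}\bigr)(w_n^N)=\tCoev^{\otimes n}_{V_N}(1)$, which under $\iota$ is exactly the vector $v$ from equation~\eqref{eq:15'}. Set
\[
\mathscr F_n^N:=\bigl(\Theta^{-1}(\tilde v_n^N)\bigr)\otimes_{j} 1 \ \in\ \HH_{2n,n(N-1)}|_{\gamma}.
\]
The compatibility of $\Theta$ with specialisation of the coefficients, proved along the lines of Theorem~\ref{Th:K}, gives $\mathscr F_n^N|_{\delta_{N-1}}=\Theta_{N-1}^{-1}(v)\otimes_{\iota}1=p_{N-1}(\mathscr F_0)=\tilde{\mathscr F}_n^N$ as in~\eqref{eq:23}.

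For the class $\mathscr G_n^N$, I would produce a two-variable lift of the functional $\mathscr G_0=\Ev^{\otimes n}_{\hat V_{N-1}}\circ\Theta_{N-1}$, and then use non-degeneracy to read off $\mathscr G_n^N$ as its dual. Concretely, using the lifted coefficients $\tilde c^{\,i}_k$ one defines a generic normalisation $\tilde\alpha\colon(\hat V^{*})^{\otimes n}\to\hat V^{\otimes n}$ and declares
\[
\widetilde{\Ev}^{\otimes n}\bigl(v_{i_1}\otimes\cdots\otimes v_{i_{2n}}\bigr)=
\begin{cases}
\ev^{\otimes n}_{V_N}\!\circ\!\bigl(\Id^{\otimes n}\otimes\tilde\alpha^{-1}\bigr)\bigl(v_{i_1}\otimes\cdots\otimes v_{i_{2n}}\bigr),& \text{if all } i_j\le N-1,\\
0,& \text{otherwise,}
\end{cases}
\]
extended $\Z[s^{\pm},q^{\pm}]$-linearly. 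This satisfies $\widetilde{\Ev}^{\otimes n}|_{\eta_{N-1}}=\Ev^{\otimes n}_{\hat V_{N-1}}$ by construction. Then $\tilde{\mathscr G}_0:=\widetilde{\Ev}^{\otimes n}\circ\Theta$ gives a functional on $\HH_{2n,n(N-1)}|_{\xi}$ that lifts $\mathscr G_0$. Tensoring up to $\Q(s,q)$ and appealing to the non-degenerate pairing $\langle\,,\,\rangle|_{\gamma}$ yields a unique class $\mathscr G_n^N\in\HH^{\partial}_{2n,n(N-1)}|_{\gamma}$ with $\tilde{\mathscr G}_0\otimes\Id_{\Q(s,q)}=\langle\cdot,\mathscr G_n^N\rangle|_{\gamma}$.

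The main obstacle I expect is the compatibility check $\mathscr G_n^N|_{\delta_{N-1}}=\tilde{\mathscr G}_n^N$: both are defined implicitly via duality in different specialisations of the Blanchfield pairing, so one must verify that the diagram of specialisations intertwines the two dualisations. The cleanest way is to argue by uniqueness: applying $\delta_{N-1}$ to the identity $\tilde{\mathscr G}_0\otimes\Id_{\Q(s,q)}=\langle\cdot,\mathscr G_n^N\rangle|_{\gamma}$ produces, after evaluating on lifts of the multifork basis, the identity $\mathscr G_0\otimes\Id_{\Q(q)}=\langle\cdot,\mathscr G_n^N|_{\delta_{N-1}}\rangle|_{\alpha_{N-1}}$, which by the defining property of $\tilde{\mathscr G}_n^N$ in Remark~\ref{GG} and the non-degeneracy of $\langle\,,\,\rangle|_{\alpha_{N-1}}$ forces $\mathscr G_n^N|_{\delta_{N-1}}=\tilde{\mathscr G}_n^N$. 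A subtle bookkeeping point, rather than a genuine obstruction, is that the truncation built into $\widetilde{\Ev}^{\otimes n}$ depends explicitly on $N$, so the lift exists only after fixing $N$; this matches the statement of the lemma, where the weight space $H_{2n,n(N-1)}$ itself depends on $N$.
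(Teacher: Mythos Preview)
Your proposal is correct and follows essentially the same route as the paper: lift $\tilde v_n^N$ via $\Theta^{-1}$ and extend scalars by $j$ to obtain $\mathscr F_n^N$, and lift the normalised evaluation to two variables, precompose with $\Theta$, then dualise through the non-degenerate pairing $\langle\,,\,\rangle|_{\gamma}$ to obtain $\mathscr G_n^N$. Your write-up is in fact slightly more explicit than the paper's in two places: you spell out why $\gamma(p_e)$ is invertible in $\Q(s,q)$, and you give the uniqueness argument for the compatibility $\mathscr G_n^N|_{\delta_{N-1}}=\tilde{\mathscr G}_n^N$, which the paper asserts but does not detail.
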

\begin{proof}

1) We start with the definition of the first homology class and we aim to lift it over two variables. Following the discussion from  Step \ref{StepII}, the normalising function can be lifted over two variables. From the induction procedure that we used in \ref{eq:-1}, we know that there exist $\tilde{v}_n^N\in \hat{W}_{2n,n(N-1)}$ such that:
\begin{equation}
\eta_{N-1}(\tilde{v}_n^N)=\left( Id^{\otimes n} \otimes \alpha_{n,N}\right)\circ {\tcoev}^{\otimes n}_{V_N}(1).
\end{equation}
Following the definition of the normalising coevaluation, this means that we have:
\begin{equation}\label{eq:21} 
\eta_{N-1}(\tilde{v}_n^N)={\tCoev}^{\otimes n}_{V_N}(1).
\end{equation}
We remark that also in  \ref{P:bqu}, the function $\Theta$ is defined over the ring with two parameters and we have:
\begin{equation}\label{eq:22}
\Theta|_{\eta_{N-1}}=\Theta_{N-1}.
\end{equation}
\begin{definition}
Using the isomorphism from \ref{T:twovar}, let us consider the class:
$$\mathscr F^N_{0,n}:= \Theta^{-1}(v_n^N) \in \HH_{2n,n(N-1)}|_{\xi}.$$     
\end{definition}
\begin{remark}
Following the properties from equation \ref{eq:21} and equation \ref{eq:22}, and the construction of the first homology class from relation \ref{eq:15}, we obtain:
\begin{equation}\label{eq:24}
\mathscr F^N_{0,n}|_{\eta_{N-1}}=\mathscr F_0.
\end{equation}
\end{remark}
\begin{definition}(Lift of the first homology class) Having in mind the construction of the first homology class over a field, and relation \ref{eq:24}, let us consider:
\begin{equation}\label{eq:25}
\mathscr F^N_{n}:= \mathscr F^N_{0,n}|_{\j} \in \HH_{2n,n(N-1)}|_{\gamma}.
\end{equation}     
\end{definition}
\begin{remark}
Putting together the definition of the first class from equation \ref{eq:23} and the globalised class from equation \ref{eq:25}, together with relation \ref{eq:24} and the commutativity of the specialisations:
\begin{equation}\label{eq:27}
\delta_{N-1} \circ j=\iota \circ \eta_{N-1},
\end{equation}
 we conclude the first relation from the statement: 
\begin{equation}
\mathscr F_n^N |_{\delta_{N-1}}=\tilde{\mathscr F}_n^N.
\end{equation}
 \end{remark}

2) On the other hand, the action of the quantum group, especially of the generator $K$ can be seen over $\Z[q^{\pm1},s^{\pm 1}]$ and the evaluation from relation $\ref{E:DualityForCat}$, that corresponds to the caps from the diagram of the knot, can be defined over the ring $\Z[s^{\pm},q^{\pm}]$ in two parameters as well. Since we have seen that the coefficients of $\alpha_{n,N}$ can be lifted naturally over two variables, we conclude that we can lift the normalised evaluation as follows.
\begin{remark}
There exists a normalised evaluation over two variables:
\begin{equation}
\Ev_{\hat{V}}^{\otimes n}: \hat{W}_{2n, n(N-1)}\rightarrow \Z[q^{\pm 1},s^{\pm}]
\end{equation}
which specialises to the normalised evaluation:
\begin{equation}
\Ev_{\hat{V}}^{\otimes n}|_{\eta_{N-1}}=\Ev_{\hat{V}_{N-1}}^{\otimes n}.
\end{equation}
\end{remark}
\begin{definition}
Using this evaluation and the Kohno's function over two variables, let us consider the elements:
\begin{equation}
\begin{cases}
\mathscr{G}_{0,n}:=\Ev_{\hat{V}_{}}^{\otimes n} \circ \ \Theta \in Hom(\HH_{2n,n(N-1)}|_{\xi}, \Z[q^{\pm},s^{\pm}])\\
\mathscr{G}_n:=\mathscr{G}_{0,n} \otimes Id_{\Q(q,s)}\in Hom\left(\HH_{2n,n(N-1)}|_{\gamma}, \Q(q,s) \right).
\end{cases}
\end{equation}
\end{definition}
Secondly, the Blanchfield pairing will remain non-degenerate when we specialise the coefficients using the function $\gamma$:
\begin{equation}
< , >|_{\gamma}: \HH_{2n,n(N-1)}|_{\gamma}\otimes \HH^{\partial}_{2n,n(N-1)}|_{\gamma}\rightarrow \Q(q,s).
\end{equation}
\begin{definition}(Globalisation of the second homology class)
Dualising the globalised evaluation $\mathscr{G}_n$ using the non-degenerate pairing $< , >|_{\gamma}$ we get a homology class
$$\mathscr G_n^N \in H^{\partial}_{2n, n(N-1)}|_{\gamma}$$ such that:
\begin{equation}\label{eq:26} 
\mathscr{G}_n (\cdot)= <\cdot, \mathscr G^N_n>|_{\gamma}.
\end{equation}
\end{definition}
Following the construction from definition \ref{G}, the definition of the globalised class from relation \ref{eq:26} and the commutativity property related to the rings of coefficients from \ref{eq:27}, we obtain the second specialisation property from the statement:
$$ \ \mathscr G_n^N |_{\delta_{N-1}}=\tilde{\mathscr G}_n^N.$$
 \end{proof}
Moreover, using that the braid group action commutes with the specialisation of the coefficients, the intersection pairings are related one with the other as follows:
\begin{equation}
<(\beta_n \cup \unit_n) \tilde{\mathscr F}^N_n, \tilde{\mathscr G}^N_n>|_{\alpha_{N-1}}= <(\beta_n \cup \unit_n) \mathscr F^N_n, \mathscr G^N_n>|_{\delta_{N-1}}.
\end{equation}

Following the homological model from Theorem \ref{T:geom1} and the result concerning the lift of the homology classes from Lemma \ref{L:lift}, we conclude the topological model for $J_N(L,q)$, as it is presented in relation \ref{T:geom} and conclude the proof of the main Theorem \ref{T:geom}.

\
\
\url{https://www.maths.ox.ac.uk/people/cristina.palmer-anghel}  

\end{document}